\definecolor{blue_links}{RGB}{13,0,180} 
\newcommand{\EEE}{\color{black}} 
\newcommand{\BBB}{\color{black}} 
\definecolor{vg}{rgb}{0.0, 0.4, 0.1}
\newcommand{\R}[1]{\mathbb{R}^{#1}}
\renewcommand{\S}[1]{\mathbb{S}^{#1}}
\newcommand{\cC}{\mathcal C}
\newcommand{\cH}{\mathcal H}
\newcommand{\cL}{\mathcal L}
\newcommand{\cM}{\mathcal M}
\newcommand{\bulk}{\mathrm{bulk}}
\newcommand{\de}{\mathrm d}
\newcommand{\surface}{\mathrm{surf}}
\newcommand{\eps}{\varepsilon}
\newcommand{\wto}{\rightharpoonup}
\newcommand{\lwto}{\longrightharpoonup}
\newcommand{\wsto}{\stackrel{*}{\rightharpoonup}}
\newcommand{\wSD}[1]{\stackrel[SD]{#1}\lwto}
\renewcommand{\geq}{\geqslant}
\renewcommand{\leq}{\leqslant}
\newcommand{\longrightharpoonup}{\relbar\joinrel\rightharpoonup}
\newcommand{\average}{{\mathchoice {\kern1ex\vcenter{\hrule
				height.4pt width 8pt depth0pt}
			\kern-11pt} {\kern1ex\vcenter{\hrule height.4pt width 4.3pt
				depth0pt} \kern-7pt} {} {} }}
\newcommand{\ave}{\average\int}
\newcommand{\res}{\mathop{\hbox{\vrule height 7pt width .5pt depth
			0pt\vrule height .5pt width 6pt depth0pt}}\nolimits}
\mathchardef\emptyset="001F
\providecommand{\U}[1]{\protect\rule{.1in}{.1in}}
\numberwithin{equation}{section}
\newtheorem{definition}{Definition}[section]
\newtheorem{theorem}[definition]{Theorem}
\newtheorem{lemma}[definition]{Lemma}
\newtheorem{proposition}[definition]{Proposition}
\newtheorem{corollary}[definition]{Corollary}
\newtheorem{remark}[definition]{Remark}
\begin{document}

\title[Structured deformations in   linearized elasticity]{Structured deformations in   linearized elasticity}


\author[M. Friedrich]{Manuel Friedrich} 
\address[Manuel Friedrich]{Department of Mathematics, Johannes Kepler Universit\"at Linz. Altenbergerstrasse 69, 4040 Linz,
    Austria}
\email{manuel.friedrich@jku.at}

\author[J. ~Matias]{Jos\'{e} Matias}
\address[J.~Matias]{Centro de An\'{a}lise Matem\'{a}tica, Geometria e Sistemas Din\^{a}micos, Departamento de Matem\'{a}tica, Instituto Superior T\'{e}cnico, Universidade de Lisboa, Av. Rovisco Pais 1, 1049-001 Lisbon, Portugal}
\email{jose.c.matias@tecnico.ulisboa.pt}

\author[E.~Zappale]{Elvira Zappale}
\address[E.~Zappale]{Dipartimento di Scienze di Base ed applicate per l'Ingegneria, Sapienza - Universit\`{a} di Roma, Via A. Scarpa, 16, 00161, Roma, Italy,
\& Research Centre in Mathematics and Applications,
University of \'Evora, Portugal}
\email{elvira.zappale@uniroma1.it}

\date{\today}

\subjclass[2020]
{49J45, 46E30, 74A60, 74M99, 74B20.}
	
\keywords{Structured deformations, linearized elasticity, disarrangements, multiscale geometry,  global method for relaxation,   integral representation}


\maketitle

\begin{abstract}
We extend the theory of structured deformations to  the setting  of  linearized elasticity by providing an integral representation for the underlying energy that features  bulk and surface contributions.  Our derivation is obtained both via a direct approach by means of a global method for relaxation in  $BD$ and via an approximation from nonlinear elastic energies associated to  {nonsimple} materials.
\color{black}
\end{abstract}

\maketitle

\section{Introduction}

The purpose of this paper  is to establish a theory for the description of first-order \emph{structured deformations} in the context of linear elasticity.  Structured deformations were  originally introduced in a seminal work  by Del Piero and Owen \cite{DPO1993} in order to provide a mathematical framework that captures the effects at the macroscopic level of smooth deformations and of non\--\-smooth deformations (disarrangements) at  a sub-macroscopic level. In the classical theory of mechanics, the deformation of the body is characterized exclusively by the macroscopic deformation field~$g$ and its gradient~$\nabla g$ whereas  in the framework of structured deformations  an additional geometrical field~$G$ is introduced.  The underlying  idea for a pair of fields   $(g,G)$,  called a structured deformation,  is that  $G$ captures the contribution at the macroscopic scale of smooth sub-macroscopic changes, while the difference $\nabla g-G$ captures the contribution at the macroscopic scale of non-smooth sub-macroscopic changes, such as slips and separations (referred to as \emph{disarrangements} \cite{DO2002}). In this sense,  $G$ is called the deformation without disarrangements, and, heuristically, the disarrangement tensor $  \nabla g-G$ is an indication of how ``non-classical'' a structured deformation is. A key observation made by Del Piero and Owen is the so-called {\it Approximation theorem}, namely, that a structured deformation  $(g, G)$ can always be approximated suitably  by a sequence of  classical deformations.  This means that the theory is suitable  to address mechanical phenomena such as elasticity, plasticity, and the behavior of crystals with defects.

The variational formulation for first-order structured deformations in the $SBV$-setting \cite{AFP}  was first addressed by Choksi and Fonseca \cite{CF1997} where a  structured deformation is defined to be a pair 
$(g,G)\in SBV(\Omega;\R{d})\times L^p(\Omega;\R{d\times N})$,   for some $\Omega \subset \mathbb{R}^N$ and $p \geq 1.$ Therein, as a fundamental basis, 
 a suitable notion of convergence and a related {\it Approximation Theorem} have been introduced. Then,   departing from 
a functional of bulk-surface-type,  an integral representation for the ``most economical way'' to approach a given structured deformation was established.  More precisely, the considered functionals assign to any deformation of the body an energy featuring a bulk contribution measuring  the deformation (gradient) and  a surface  contribution accounting for the energy needed to fracture the body.

The theory of first-order structured deformations was broadened by Owen and Paroni \cite{OP2000} to second\--\-order structured deformations, which also account for other geometrical changes at a  sub-macroscopic level, such as curvature. The corresponding  variational formulation  was carried out by Barroso, Matias, Morandotti, and Owen \cite{BMMO2017}, and features  jumps for  both the approximating fields as well as for their gradients. We refer  the interested reader to \cite{MMO} and the references therein for a comprehensive survey about the theory of structured deformations, as well as applications. Let us just highlight the paper  \cite{KKMZ} which, in a purely homogeneous framework,  extends the  theory to the case where both the fields $g$ and $G$ are no longer $SBV$ and Lebesgue-type measures, respectively, but  may also account for Cantor-type behavior and very general concentration  phenomena.  \color{black} 

The goal of this paper is to develop an analogous theory in the context of linearized elasticity where,  under the assumption of infinitesimally small deformations, the material response depends only the symmetric part of the displacement gradient. The challenge consists in adapting the existing theory to the natural function space for problems in the linear setting, namely to the   space of functions of bounded deformation (denoted in the sequel by $BD$). We start by proving an {\it ad hoc Approximation Theorem} in $BD(\Omega) \times L^p(\Omega;\mathbb R^{N\times N}_{\rm sym})$, see Theorem \ref{appTHMh} (and  Theorem \ref{StSBDappthm} for a simplified version, restricted to the $SBD$ setting).   Our first main result consists in  an integral representation for a functional defined on structured deformations, see Theorem \ref{representation} below,    where we   identify the bulk and the surface density of the relaxed functional.  Specializing to the case where the original  energy functional does not depend on the material variable, we derive a  full relaxation result in  Theorem \ref{representationbis}, i.e., we  also identify  the Cantor part of the  relaxed energy density. 

  As usual in this context, the starting point of our analysis is a global method result. In our setting, it is tailored  for structured deformations in the  full $BD$-setting with the additional (symmetrical) geometrical field $G$ belonging to $L^p$, for any $p\geq1$, see  Theorem \ref{GMthmHSDsbd}.  Indeed,  the global method for relaxation, introduced by Bouchitt\'e, Fonseca, and Mascarenhas \cite{BFM} in the $BV$-setting, and later addressed in the $SBV^p$-setting by Bouchitt\'e, Fonseca, Leoni, and Mascarenhas \cite{BFLM}, provides a general method for the identification of the integral representation of a class of
functionals on  $BV$.   Since its inception, this  method for relaxation has known numerous applications and generalizations. Without being exhaustive, we mention recent developments in the context of variable exponent spaces  \cite{SSS}, spaces of bounded deformations  \cite{CFVG, BMZ, Conti-Focardi-Iurlano:15, crismale-friedrich-solombrino},  second-order structured deformations in the space $BH$  \cite{FHP}, and in the context of hierarchical systems of structured deformations  \cite{BMZ2024, BMMOZ2024}.  

 Let us emphasize that the global method  in the linear elastic setting  is not the mere  analog   of the global method for structured deformations  in the nonlinear setting obtained in \cite{Baía-Matias-Santos, KKMZ}.  Indeed,   in the current framework,    the minimum problems are defined in the \emph{entire} $BD(\Omega)\times L^p(\Omega;\R{N\times N}_{\rm sym})$, whereas in \cite{Baía-Matias-Santos, KKMZ} they are restricted to $SBV\times L^p$. Note, however, that our approach  also allows for a variant with  minimum problems defined on  $SBD$, see Corollary \ref{remStSBDrep}.  In this sense, our perspective is slightly more general compared to \cite{KKMZ} as we formulate potentially different global methods on $BD$ and $SBD$. Only in the relaxation result for  structured deformations (see Theorem \ref{representation}) it turns out that they coincide, due to the special form of the original energy defined on $SBD$.    Summarizing, we derive  an analogous result to the one in the nonlinear elastic framework \cite{KKMZ}, yet  obtained with a different technique.

Although the extension of the  existing theory of structured deformations to the functional framework of $BD$ is of general interest to us, 
our original motivation lies in the application to solid mechanics. Indeed, if deformation gradients are very close to the set of orientation preserving rigid motions, one expects  the theory of linear elasticity  to deliver a suitable description. In this context, it is   crucial to understand the relation between different models that  capture  the materials behavior at different strain regimes.    This has been  analyzed in a purely elastic setting by Dal Maso, Negri, and Percivale \cite{DalMasoNegriPercivale:02} who showed that, in the limit of vanishing displacements,  energies in nonlinear elasticity $\Gamma$-converge to the quadratic energy functional of linear elasticity. Their result has been extended in various directions in the last two decades. Without being exhaustive, let us mention incompressible materials \cite{Jesenko-Schmidt:20, edo}, traction forces \cite{mora},  the passage from atomistic-to-continuum models \cite{Braides-Solci-Vitali:07, Schmidt:2009}, multiwell energies \cite{alicandro.dalmaso.lazzaroni.palombaro, reggiani,  davoli, Schmidt:08},   fracture \cite{Friedrich, Friedrich:15-2},  plasticity \cite{Ulisse}, epitaxial growth of thin films \cite{Kostas}, or thermoviscoelasticity \cite{visco1, visco2}.
 
In the present work, we perform such a rigorous analysis in the context of structured deformations, see Theorem \ref{th: gamma-conv}. More precisely, starting from a nonlinear bulk-surface energy, we simultaneously perform a linearization of the elastic energies  and a relaxation   in terms of structured deformations on $BD$.   Here, in the nonlinear setting we resort to a model for  nonsimple materials  \cite{Toupin:62,Toupin:64}, i.e., we consider a model in the framework of free discontinuity and gradient discontinuity problems with second-order contributions in the bulk and the surface energy. Indeed, similar to the derivation of linearized models in fracture \cite{Friedrich}, the additional second-gradient contribution allows to derive rigidity and compactness results that are analogous to the ones in elasticity \cite{DalMasoNegriPercivale:02, FrieseckeJamesMueller:02}. Let us mention, however, that the derivation is more delicate compared to \cite{Friedrich} as in the present setting surface energies genuinely also depend on the jump height of the deformations. Among others, this requires a suitable variant of the Korn-Poincar\'e inequality in $BD$, see  Appendix \ref{sec: aux-app}.

The plan of this paper is as follows: after introducing the problem and setting the notation, in Section $2$ we state the main results whose proofs are in the subsequent  sections.   Section $3$ is dedicated to the abstract result via the global method, Section $4$ to the relaxation results, and  Section $5$ to the linearization result. Finally, in  Appendix \ref{BD}  we state and prove some results in  $BV$ and  $BD$ 
spaces, and Appendix \ref{sec: aux-app} is devoted to some auxiliary statements.  We close the introduction be introducing some relevant notation which will be used throughout the paper.

\begin{itemize}
	\item $\mathbb N$ denotes the set of natural numbers without the zero element;
	\item $\Omega \subset \mathbb R^{N}$ is a bounded domain with Lipschitz boundary; 
	\item $\mathbb S^{N-1}$ denotes the unit sphere in $\mathbb R^N$;
	\item $Q\coloneqq (-\tfrac12,\tfrac12)^N$ denotes the open unit cube of $\mathbb R^{N}$ centered at the origin. We further set $Q(r) = rQ^N$ for $r>0$.  For any $\nu\in\mathbb S^{N-1}$, $Q_\nu$ denotes any open unit cube in $\mathbb R^{N}$ centered at the origin  with two faces orthogonal to $\nu$; 
	\item for any $x\in\mathbb R^{N}$ and $r>0$, $Q(x,r)\coloneqq x+ Q(r)$  denotes the open cube in $\mathbb R^{N}$ centered at $x$ with side length  $r$.   Likewise, $Q_\nu(x, r  )\coloneqq x+r Q_\nu$.
    \item   For any $r>0$ and $x \in \mathbb R^N$, by $B(x,r)$ we denote the  open  ball of center $x$ and radius $r$, i.e., $\{y \in \mathbb R^N: |x-y| <  r\}$, with $|\cdot|$ denoting the Euclidean distance in $\mathbb R^N$. When $x=0$ and $r=1$, the ball will be simply denoted by $B_1$. 
	\item ${\mathcal O}(\Omega)$ is the family of all open subsets of $\Omega $, whereas ${\mathcal O}_\infty(\Omega)$ is the family of all open subsets of $\Omega $ with Lipschitz boundary.  We write $U \Subset V$ for $U,V \in \mathcal{O}$ if $U$ is compactly contained in $V$; 
	\item $\mathcal L^{N}$ and $\mathcal H^{N-1}$ denote the  $N$-dimensional Lebesgue measure and the $\left(  N-1\right)$-dimensional Hausdorff measure in $\mathbb R^N$, respectively. The symbol $\de x$ will also be used to denote integration with respect to $\mathcal L^{N}$; 
    \item For every $y \in \mathbb R^N$ and any measurable function $u$, we define  $\tau_y u(x):= u(x-y)$.  For any set $ T\subset \mathbb{R}^N$ we define  $\tau_y T := \lbrace y + x \colon x\in T\rbrace$;
	\color{black}
    \item The set of symmetric and skew-symmetric matrices in $\mathbb R^{N\times N}$ will be denoted by $\mathbb R^{N\times N}_{\rm sym}$ and $\mathbb R^{N\times N}_{\rm skew}$, respectively;
    \item  For every element $A\in \mathbb R^{N\times N}_{\rm sym}$, and every element $\xi, \eta \in \mathbb R^N$, the quadratic form associated to $A$ acting on $\xi $ and $\eta$ is given by
    $(A \xi, \eta) = A \xi \cdot \eta$;  
	\item $\mathcal M(\Omega;\mathbb R^{N\times N})$ is the set of finite matrix-valued Radon measures on $\Omega$; 
	given $\mu\in\mathcal M(\Omega;\mathbb R^{N\times N})$,  
	the measure $|\mu|$ 
	denotes the total variation of $\mu$;
	\item $L^p(\Omega; \mathbb R^N)$ is the set of  vector-valued  $p$-integrable functions with values in $\mathbb R^N$; 
\item  We use standard notation of $SBV$ functions, see \cite[Section 4]{AFP}. In particular, we let
\begin{align*}
SBV^2(\Omega;\mathbb{R}^N) = \lbrace y \in SBV(\Omega;\mathbb{R}^N): \ \nabla y \in L^2(\Omega;\mathbb{R}^{N\times  N}), \ \mathcal{H}^{N-1}(J_y) < + \infty \rbrace,
\end{align*}
where $\nabla y(x)$ denotes the approximate differential at $\mathcal{L}^N$-a.e.\ $x \in \Omega$  and $J_y$ the jump set;
 \EEE
	\item   We use standard notations for $BD$ and $SBD$ functions. In particular, the  distributional symmetric gradient is denoted by  $ Eu$ and  admits the decomposition $ Eu=\mathcal Eu  +  E^s u$. If $Eu$ has vanishing Cantor part, we say $u \in SBD(\Omega)$ and it holds $ Eu = \mathcal E u\cL^N+[u]\odot\nu_u\mathcal H^{N-1}\res J_u$, where $J_u$ is the jump set of~$u$, $[u]$ denotes the jump of~$u$ on $J_u$, $\nu_u$ is the unit normal vector to $J_u$, and   $\odot$ the symmetric vector product. Basic properties are collected in Appendix \ref{BD};  
    \item  We denote by $\mathcal R$ the kernel of the linear operator $E$ consisting of the class of infinitesimal rigid motions in $\mathbb R^N$, i.e., affine maps of the form $Mx + b$, where $M \in \mathbb R^{N \times N}_{\rm skew}$  and $b\in \mathbb R^N$. Recalling that $\mathcal R$ is closed and finite dimensional it is possible to define the orthogonal projection $P \colon BD(\Omega) \to \mathcal R$.     
		\item  $C$ represents a generic positive constant that may change from line to line.
\end{itemize}


\medskip

\section{Setting and main results}

In this section we give an overview of our main results. In the sequel, $\Omega \subset \mathbb R^{N}$ denotes a bounded domain  with Lipschitz boundary.  \EEE

\subsection{Definition of  structured deformations in $BD$ and approximation theorem}\label{subsec. approx}

 Given $p \geq 1$, we define the class of \emph{structured deformations in $BD$ and $SBD$}  as 
\begin{equation*} StBD^p(\Omega) := \big\{ (g, G): g \in BD(\Omega), \ G \in L^p(\Omega; \mathbb{R}_{\text{sym}}^{N\times N})\big\},
\end{equation*}
and
\begin{equation*} StSBD^p(\Omega) := \big\{ (g, G): g \in SBD(\Omega),\  G \in L^p(\Omega; \mathbb{R}_{\text{sym}}^{N\times N})\big\},
\end{equation*}
respectively. We have the following approximation result.

\begin{theorem}[Approximation]\label{appTHMh}
Let   $(g,G) \in  StBD^p(\Omega)$. Then, there exists $\lbrace u_n\rbrace \subset SBD(\Omega)$ such that $u_n \to g$   strongly in $L^1(\Omega;\mathbb R^{N})$, $\mathcal {E}u_n = G$, and 
\begin{align}\label{EuntoG}
E u_n \overset{\ast}{\rightharpoonup} E g \hbox{ in }\mathcal M(\Omega;\mathbb R^{N\times N}_{\rm sym}).
\end{align}
Moreover, there exists $C(N)>0$ such that    
\begin{align}\label{estub}|E u_n|(\Omega) \leq C(N)\big(|E g|(\Omega)+ \|G\|_{L^1(\Omega)}\big) \quad \text{ for all $n \in \mathbb{N}$}.
\end{align}
\color{black}
\end{theorem}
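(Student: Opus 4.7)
The plan is to realize the approximating sequence as $u_n := v + w_n$, where $v\in SBD(\Omega)\cap L^\infty(\Omega;\R{N})$ is a single function engineered so that $\mathcal{E}v = G$ almost everywhere with $|Ev|(\Omega)\le C(N)\|G\|_{L^1(\Omega)}$, and $\{w_n\}\subset SBD(\Omega)$ is a piecewise-rigid approximation of the residual $\tilde g := g - v\in BD(\Omega)$. Since infinitesimal rigid motions have vanishing symmetric gradient, this decomposition forces $\mathcal{E}u_n = \mathcal{E}v + \mathcal{E}w_n = G$ for every $n$, so the identity $\mathcal{E}u_n = G$ is built in, and the remaining claims reduce to the $L^1$-convergence $w_n\to\tilde g$ together with an equi-bound on $|Ew_n|(\Omega)$.

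The construction of $v$ adapts the piecewise-affine construction of \cite{CF1997} to the symmetric setting via a multiscale series. On the dyadic mesh of cubes $Q_i^k$ of side $h_k = 2^{-k}$ I would define $v^{(k)}(x) := G_i^{(k)}(x - x_i^k)$ on $Q_i^k$, with $G_i^{(k)}$ the cellwise average of the residual $R_k := G - \mathcal{E}\bigl(\sum_{j<k} v^{(j)}\bigr)$. Each $v^{(k)}$ is piecewise affine with $\mathcal{E}v^{(k)}$ equal to the $h_k$-average of $R_k$ and jumps supported on the $h_k$-skeleton. The pointwise estimates $\|v^{(k)}\|_{L^\infty}\le Ch_k|G_i^{(k)}|$ and $|E^s v^{(k)}|(\Omega)\le C(N)\|G_i^{(k)}\|_{L^1(\Omega)}$, combined with geometric decay (first for Lipschitz $G$ via $\|R_k\|_\infty\le Ch_{k-1}$, then by density in $L^p$), show that $v := \sum_k v^{(k)}$ converges in $SBD(\Omega)\cap L^\infty(\Omega;\R{N})$ with $\mathcal{E}v = G$ a.e.\ and the claimed linear bound.

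For the residual $\tilde g := g - v\in BD(\Omega)$, on the mesh $\{Q_i^n\}$ of side $1/n$ the Korn--Poincar\'e inequality in $BD$ (see Appendix \ref{BD}) yields infinitesimal rigid motions $r_i^n\in\mathcal R$ with $\|\tilde g - r_i^n\|_{L^{N/(N-1)}(Q_i^n)}\le (C/n)\,|E\tilde g|(Q_i^n)$. Set $w_n := \sum_i\chi_{Q_i^n}r_i^n\in SBD(\Omega)$: then $\mathcal{E}w_n = 0$ a.e., $w_n\to\tilde g$ in $L^1(\Omega;\R{N})$ by summing the local estimates, and applying Korn--Poincar\'e on each double-cube $Q_i^n\cup Q_j^n$ sharing a face $F$ (combined with the scaling-invariant equivalence of norms on the finite-dimensional space $\mathcal R$) gives $\|r_i^n - r_j^n\|_{L^1(F)}\le C\,|E\tilde g|(Q_i^n\cup Q_j^n)$; summing over faces yields $|Ew_n|(\Omega)\le C(N)|E\tilde g|(\Omega)\le C(N)(|Eg|(\Omega)+\|G\|_{L^1(\Omega)})$.

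Assembling, $u_n := v + w_n\in SBD(\Omega)$ satisfies $\mathcal{E}u_n = G$ and $u_n\to g$ in $L^1$; the equi-boundedness of $|Ew_n|(\Omega)$ together with $w_n\to\tilde g$ in $L^1$ gives $Ew_n\weakst E\tilde g$, hence $Eu_n\weakst Ev + E\tilde g = Eg$ in $\mathcal{M}(\Omega;\R{N\times N}_{\rm sym})$, proving \eqref{EuntoG}; the bound \eqref{estub} then follows from $|Eu_n|(\Omega)\le |Ew_n|(\Omega)+|Ev|(\Omega)\le C(N)(|Eg|(\Omega)+\|G\|_{L^1(\Omega)})$. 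The main obstacle is the first step: Saint-Venant compatibility obstructs realizing an arbitrary $L^p$ symmetric field as $\mathcal{E}$ of a Sobolev map, so the jump contributions of $v$ must be carefully orchestrated across scales so that the series simultaneously converges in $SBD$, closes the residual exactly in the limit, and retains the linear $L^1$-bound on $|Ev|(\Omega)$. Boundary cubes that straddle $\partial\Omega$ are handled in the standard way by embedding $\Omega$ in a larger cube and extending $\tilde g$ and $G$ across $\partial\Omega$ before quantizing.
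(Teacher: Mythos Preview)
Your decomposition $u_n = v + w_n$ is reasonable, and the second half---the piecewise-rigid approximation $w_n$ of $\tilde g = g - v \in BD(\Omega)$ via local Korn--Poincar\'e and the face-by-face control of $|Ew_n|(\Omega)$---is correct and is in fact a cleaner $BD$-native substitute for the paper's two-step approach (smooth approximation of $g$ followed by piecewise-constant $BV$-approximation of the Alberti correction).

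The gap is in the first step. Your multiscale series does produce, for Lipschitz $G$, a function $v$ with $\mathcal{E}v = G$, but the bound you obtain is $|E^s v|(\Omega) \le C\sum_k \|R_k\|_{L^1} \le C\,|\Omega|\,\mathrm{Lip}(G)$, \emph{not} $C\|G\|_{L^1}$: your residuals satisfy $\|R_k\|_\infty \le C h_{k-1}\,\mathrm{Lip}(G)$ and the total face area at scale $k$ is $\sim |\Omega|/h_k$, so each level contributes $\sim |\Omega|\,2^{-k}\,\mathrm{Lip}(G)$. Since the constant depends on $\mathrm{Lip}(G)$, the density-in-$L^p$ step does not preserve the linear $L^1$-bound. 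Obtaining $|Ev|(\Omega) \le C(N)\|G\|_{L^1}$ is exactly the content of Alberti's theorem (the paper's Theorem~\ref{Al}), whose proof requires a Calder\'on--Zygmund stopping-time selection of scales rather than averaging at every dyadic level. The paper bypasses this by invoking Theorem~\ref{Al} directly; if you do the same---take $v$ to be the Alberti function with $\nabla v = G$ (hence $\mathcal{E}v = G$ since $G$ is symmetric) and $|Dv|(\Omega)\le C(N)\|G\|_{L^1}$---then your $w_n$-argument completes the proof and gives a legitimate alternative route.
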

\begin{proof}
The first part of the  proof  would be  an immediate consequence of \cite[Theorem 1.1]{Silhavy}, see  Theorem \ref{silhavy_app} in the Appendix. In particular, $\lbrace u_n\rbrace \subset SBV(\Omega;\mathbb R^N)$ and $\nabla u_n= \mathcal{E} u_n$  with $\mathcal E u_n = G$.  Yet, to ensure also the  second part of the statement,
we  prefer to follow another path 
similar to the one in \cite[Theorem~2.3]{KKMZ} for the $BV$ case,  by applying Alberti's Theorem.

By Proposition \ref{prop:densityuptoboundary}, there exists $\{g_n \}\subset C^\infty(\overline{\Omega};\mathbb R^N)$ such that
$g_n \overset{L^1}{\to} g$,  $E g_n=\mathcal{E}g_n\wsto  Eg$ in  the sense of measures, and in particular
\begin{equation}\label{818bis}
|Eg|(\Omega) = \lim_{n\to \infty}| \mathcal E g_n|(\Omega). 
\end{equation}
Observe that for any $g_n \in SBD(\Omega)$ (in particular $g_n \in C^\infty(\overline \Omega;\mathbb R^N$)) and $G \in L^1(\Omega;\mathbb R^{N\times N}_{\rm sym})$, by Alberti's Theorem \ref{Al} there exists $h_n \in SBV(\Omega;\mathbb R^N)$ such that 
\begin{align}\label{818tris}
 \mathcal E h_n=\nabla h_n= G-\mathcal E g_n,
 \end{align} and such that 
 \begin{align}\label{818quin}
 |D h_n| (\Omega) \leq C'(N) \|G- \mathcal E g_n\|_{L^1(\Omega)}.
 \end{align}
 By Lemma \ref{ctap} there exists a  piecewise constant sequence $\{h_{n,k}\}_k \subset SBV(\Omega;\mathbb R^N)$  such that $h_{n,k} \to h_n  $ in $L^1(\Omega;\mathbb R^N)$ and 
 \begin{align}\label{818quater}|D h_{n,k}|(\Omega) \to |D h_n|(\Omega) \hbox{ as }k \to +\infty.
 \end{align}
 Observe that, up to extending $h_n$ outside of $\Omega$, with $|Dh_n|(\partial \Omega) = 0 $, it also follows that $|E h_{n,k}|(\Omega) \to |E h_n|(\Omega)$ as $k \to +\infty$. Indeed,  $|E h_{n,k}| (\Omega) $ is bounded in $k$, hence, up to a subsequence, $E u_{n,k}$ is weakly* converging to $E h_n$, and   $|D h_n| (\partial \Omega) = 0$, 
see \eqref{Ehn*} in the appendix for the precise argument. 
  Define
 \begin{align*}u_{n,k}:=g_n + h_n - h_{n,k}.
 \end{align*}
 Then, 
 $$
 \lim_{n\to \infty} \lim_{k\to \infty} u_{n,k}= \lim_{ n \to \infty} g_n = g  \ \ \ \hbox{ strongly in } L^1(\Omega;\mathbb R^{N}),$$
 $$
 \mathcal E u_{n,k}= \mathcal E g_n + \mathcal E h_n= 
 G \quad \text{for every $n, k \in \mathbb N$,} $$
 as well as 
 \begin{align*}
 \lim_{n\to \infty} \lim_{k \to \infty} E u_{n,k} &= \lim_{n\to \infty} \lim_{k\to \infty} (E g_n + E h_n - E h_{n,k})= \lim_{n\to \infty} \lim_{k\to \infty} (\mathcal E g_n + E h_n - E h_{n,k})  = 
 \lim_{n\to \infty} \mathcal E g_n = E g 
 \end{align*}
 weakly* in the sense of measures,  where in the penultimate equality the strict convergence of $E h_{n,k}$ to $E h_n$ as $k \to +\infty$
 has been used and in the last equality we have applied \eqref{818bis}.

   
To conclude the proof, we choose a suitable diagonal sequence and use the metrizability of weak* convergence on bounded sets: the triangle inequality,
 \eqref{818bis}, \eqref{818tris}, \eqref{818quin}, and \eqref{818quater} lead to the right-hand side of \eqref{estub} for $|E u_{n,k}|$ in place of $|E u_{n}|$,  up to considering  suitable tails  in $n$ and $k$ of the sequence $\{u_{n, k}\}$.
 Then, defining a diagonal sequence  $u_n:= u_{n, k(n)}$ 
we get \eqref{estub} and \eqref{EuntoG}.
\end{proof}

\begin{remark}\label{remStBDconv}
We observe that the bound in \eqref{estub} is obtained as in \cite[Proposition 2.1]{MMOZ}.

If in the above theorem $g \in SBD(\Omega)$, a more direct argument could be implemented, namely, we can take $u_n= g+h-h_n$,   where $h \in SBV(\Omega;\mathbb R^N)$ such that $\nabla h= \mathcal E h= G- \mathcal E g$ and $\lbrace h_n\rbrace$ is a piecewise constant approximation of $h$, leading to $\mathcal E u_n= G$.  In this case, the use of the triangle inequality gives  $|E u_n|(\Omega)\leq C(N)(|E g|(\Omega)+ \|G\|_{L^1(\Omega)})$. A detailed proof can be found in the Appendix, see Theorem \ref{StSBDappthm}. 
\end{remark}


\begin{definition}\label{StBDconv}
A sequence $\lbrace g_n\rbrace\subset SBD(\Omega)$ is said to converge to $(g,G)\in StBD^p(\Omega)$ if and only if $g_n \to g$ in $L^1(\Omega;\mathbb R^N)$ and 
$$\mathcal E g_n  \overset{\ast}{\rightharpoonup}  G \text{ in } \mathcal M(\Omega;\mathbb R^{N\times N}_{\rm sym})  \   \text{ if   $p=1$ \quad \quad  or } \ \ \quad \quad    \mathcal E u_n \wto G \text{ in } L^p(\Omega;\mathbb R_{\rm sym}^{N\times N}) \   \text{ if   $p>1$}.   $$
This convergence will be denoted as 
\begin{equation*}
g_n  \wSD{*}  (g, G).
\end{equation*}
\end{definition}

\subsection{Integral representation}\label{irep}

We now present an integral representation result for functionals defined on structured deformations in $BD$.  

 Let $p \geq 1$ and let $\mathcal F\colon StBD^p(\Omega) \times   \mathcal 
B(\Omega) \to [0, +\infty]$  be a functional satisfying the following hypotheses:
\begin{enumerate}
	\item[(H1)] for every $(g, G) \in StBD^p(\Omega) $, $\mathcal F(g, G;\cdot)$ is a Radon measure; 
	\item[(H2)] for every $O \in \mathcal O(\Omega)$, $\mathcal F(\cdot, \cdot; O)$ is $StBD^p$-lower semicontinuous, i.e., 
	if $(g, G) \in StBD^p(\Omega) $ and $(g_n, G_n)$ $\in StBD^p(\Omega) $ are such that $g_n \to g$ in $L^1(\Omega;\mathbb R^{N} )$, $G_n\rightharpoonup G$ in $L^p(\Omega;\mathbb{R}_{\text{sym}}^{N\times N})$ if $p>1$ (and $G_n\overset{*}{\rightharpoonup} G$  in $\mathcal M(\Omega;\mathbb{R}_{\text{sym}}^{N\times N})$ if $p=1$), as $n \to +\infty$, then
	$$
	\mathcal F(g, G;O)\leq \liminf_{n\to +\infty}\mathcal F(g_n,G_n;O);
	$$
    \color{black}
	\item[(H3)] for all $O \in \mathcal O(\Omega)$, $\mathcal F(\cdot, \cdot;O)$ is local, that is, if $g= u$, $G= U$ a.e.\ in $O$, then 
	$\mathcal F(g, G;O)= \mathcal F(u, U;O)$;
	\item[(H4)] there exists a constant $C>0$ such that
	\begin{align*}
	 \frac{1}{C}\left( \|G\|^p_{L^p(O)}  + | E g|(O)\right)&\leq \mathcal F(g, G;O) \leq
	  C\left(\mathcal L^N(O) + \|G\|_{L^p(O)}^p  +  |E g|(O)\right)
	\end{align*}
 for every $(g, G) \in StBD^p(\Omega)$ and every $O \in  \mathcal B(\Omega)$.

\end{enumerate}

\medskip

To formulate the main result, we need some more notation.
Given $(g, G)\in StBD^p(\Omega)$ and 
$O \in \mathcal O_{\infty}(\Omega)$, we introduce the space of \emph{competitors}
\begin{align*}
	\mathcal C_{StBD^p}(g, G; O):=\Big\{(u, U)\in StBD^p(\Omega) \colon \,  u=g \hbox{ in a neighborhood of } \partial O,  
	 \int_O (G-U) \,\de x=0  \Big\}, 	 
\end{align*}
and we let $m\colon StBD^p(\Omega)\times \mathcal O_\infty(\Omega)  \to [0,+\infty] $ be the functional defined by
\begin{align}\label{mdef}
m(g, G;O):=\inf\big\{\mathcal F(u, U; O): (u, U)\in \mathcal C_{StBD^p}(g, G; O)\big\}.
\end{align}
 For $x_0\in \Omega$, $ a \in \mathbb R^N$, and  $\xi \in \mathbb{R}^{N\times N}$ we define $\ell_{x_0,a,\xi}(x) = a+ \xi(\cdot-x_0)$, and for $x_0\in \Omega$, $ \theta,\lambda \in \mathbb R^N$,  $\nu \in \mathbb S^{N-1}$ we let 
\begin{align}\label{stepfun}
v_{x_0,\lambda,\theta, \nu}(x) := \begin{cases} \lambda, &\hbox{if } (x-x_0)\cdot \nu > 0\\
\theta, &\hbox{if } (x-x_0)\cdot \nu \leq 0.\end{cases}
\end{align}
Finally, by $0$ we denote the zero matrix in $\mathbb R^{N \times N}$.

Following the ideas of the global method of relaxation introduced in \cite{BFM}, our aim  is to prove the theorem below.   
 
\begin{theorem}\label{GMthmHSDsbd}
	Let $p \geq  1$  and let $\mathcal F\colon StBD^p(\Omega) \to [0, +\infty]$ be a functional satisfying {\rm (H1)}--{\rm (H4)} on $StBD^p(\Omega)$. 
Then, for every $(g,G) \in StSBD^p(\Omega)$ and $O \in \mathcal O(\Omega)$,  it  holds that 
	\begin{align*}
	\mathcal F(g, G;O)= \int_O \!f\big(x,g(x),   \nabla  g(x), G(x)\big)\,\de x +\int_{O \cap J_g}\!\!\!\!\!\Phi\big(x, g^+(x), g^-(x),\nu_g(x)\big) 
	\,\de \mathcal H^{N-1}(x),\end{align*}
where
\begin{align}\label{f}
f(x_0, a, \xi, B):=	\limsup_{\varepsilon \to 0^+}\frac{m(\ell_{x_0,a,\xi}, B; Q(x_0,\varepsilon))}{\varepsilon^N},
	\end{align}
	 for all $x_0\in \Omega$, $ a \in \mathbb R^N$,   $\xi \in \mathbb R^{N\times N}$, and  $ B \in \mathbb{R}_{{\rm sym}}^{N\times N}$, and
\begin{align}\label{Phi}
	\Phi(x_0, \lambda, \theta, \nu):= \limsup_{\varepsilon \to 0^+}\frac{m(v_{x_0,\lambda, \theta,\nu}, 0; Q_{\nu}(x_0,\varepsilon))}{\varepsilon ^{N-1}},
	\end{align}
for all $x_0\in \Omega$, $ \lambda, \theta \in \mathbb R^N$, and $\nu \in \mathbb S^{N-1}$.
\end{theorem}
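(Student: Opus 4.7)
The plan is to follow the global-method scheme of \cite{BFM}, adapted to our pair-setting. The proof splits naturally into three stages: showing that $\mathcal F(g,G;\cdot)$, as a Borel measure, is absolutely continuous with respect to $\mu := \mathcal L^N\res\Omega + \mathcal H^{N-1}\res J_g$; comparing $\mathcal F$ with the auxiliary functional $m$; and identifying the Radon--Nikodym densities by blow-up.

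For the first stage, since $(g,G)\in StSBD^p(\Omega)$ the decomposition $|Eg|=|\mathcal Eg|\mathcal L^N + |[g]\odot\nu_g|\,\mathcal H^{N-1}\res J_g$ combined with the upper bound in (H4) and $G\in L^p$ yields
\[
\mathcal F(g,G;\cdot) \leq C\big(1+|G|^p+|\mathcal Eg|\big)\mathcal L^N + C\,\mathcal H^{N-1}\res J_g,
\]
so that the Radon--Nikodym theorem produces Borel densities $\bar f$ and $\bar\Phi$ with
\[
\mathcal F(g,G;O) = \int_O \bar f(x)\,\de x + \int_{O\cap J_g} \bar\Phi(x)\,\de\mathcal H^{N-1}(x).
\]

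The second stage is a quantitative comparison between $\mathcal F(g,G;\cdot)$ and $m(g,G;\cdot)$. I would establish that for every $O\in \mathcal O_\infty(\Omega)$,
\[
m(g,G;O)\leq \mathcal F(g,G;O)\leq m(g,G;O)+ C\,\mathcal F(g,G;(O_\delta\setminus O_{-\delta})\cap \Omega),
\]
where $O_{\pm\delta}$ are inner/outer tubular neighborhoods and the error vanishes as $\delta\to 0$ whenever $\partial O$ is $\mathcal F(g,G;\cdot)$-null. The left inequality is trivial since $(g,G)$ competes in \eqref{mdef}; the right inequality requires a gluing lemma: given an almost-minimizer $(u,U)$ for $m$, one pastes $(u,U)$ inside and $(g,G)$ outside a thin collar, restores the integral constraint $\int_O(G-U)\,\de x=0$ by a rigid-motion correction via the projection $P\colon BD(\Omega)\to\mathcal R$, and bounds the energy of the transition layer using (H1)--(H3) and the Korn--Poincar\'e-type inequality in $BD$ from Appendix~\ref{sec: aux-app}.

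The third stage identifies $\bar f$ and $\bar\Phi$ by blow-up. For $\mathcal L^N$-a.e. $x_0\in \Omega$, Besicovitch's derivation theorem gives $\bar f(x_0) = \lim_{\varepsilon\to 0}\mathcal F(g,G;Q(x_0,\varepsilon))/\varepsilon^N$, and at such $x_0$ one may further assume $x_0$ is a Lebesgue point of $\nabla g$ and $G$ and that $g$ is approximately differentiable there. Applying the stage-two comparison and replacing $(g,G)$ by $(\ell_{x_0,g(x_0),\nabla g(x_0)}, G(x_0))$ on $Q(x_0,\varepsilon)$, the error being controlled by Lebesgue-point smallness and (H4), yields $\bar f(x_0) = f(x_0,g(x_0),\nabla g(x_0), G(x_0))$. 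At $\mathcal H^{N-1}$-a.e.\ $x_0\in J_g$, an analogous blow-up on cubes $Q_{\nu_g(x_0)}(x_0,\varepsilon)$ replaces $(g,G)$ by $(v_{x_0,g^+(x_0),g^-(x_0),\nu_g(x_0)}, 0)$, using that $G\in L^p$ has vanishing $L^p$-mean on rescaled cubes around $\mathcal H^{N-1}$-a.e.\ point of $J_g$ and that $g$ admits two-sided approximate limits $g^\pm$ there; this gives $\bar\Phi(x_0) = \Phi(x_0,g^+(x_0),g^-(x_0),\nu_g(x_0))$.

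The hard part will be the gluing step in stage two: in $BD$ a naive cut-off between $u$ and $g$ creates uncontrolled jumps on the transition collar, and one must simultaneously adjust $U$ so that the constraint $\int_O(G-U)\,\de x=0$ is preserved without inflating the $L^p$-norm. Handling both constraints via a rigid-motion correction (which does not affect $Eu$), together with a careful choice of collar thickness and the Korn--Poincar\'e estimate from Appendix~\ref{sec: aux-app}, is the delicate point; only once the error is shown to be dominated by $\mathcal F(g,G;\cdot)$ on a vanishing neighborhood of $\partial O$ can the blow-ups of stage three be matched cleanly against the definitions \eqref{f} and \eqref{Phi}.
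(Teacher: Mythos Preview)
Your overall three-stage strategy matches the paper's approach (absolute continuity, $\mathcal F\leftrightarrow m$ comparison, blow-up identification), and the paper indeed reduces Theorem~\ref{GMthmHSDsbd} to Proposition~\ref{GMthmHSD} via exactly this scheme. However, the specific tools you propose for the gluing step in stage two are misidentified and would not work as stated.

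First, a rigid-motion correction cannot restore the constraint $\int_O(G-U)\,\de x=0$: adding an element of $\mathcal R$ to $u$ leaves both $Eu$ and $U$ unchanged, so it has no effect on the integral constraint, which involves only the second field $U$. The paper handles this instead by setting $\widetilde V_\varepsilon$ equal to $\widetilde U$ on the inner cube and equal to a \emph{constant} on the collar $Q(x_0,\varepsilon)\setminus Q(x_0,\delta\varepsilon)$, the constant being chosen exactly so that $\int_{Q(x_0,\varepsilon)}\widetilde V_\varepsilon\,\de x=\int_{Q(x_0,\varepsilon)}U\,\de x$; the $L^p$ cost of this constant on the thin collar is then shown to vanish in the double limit $\varepsilon\to 0$, $\delta\to 1^-$ via H\"older's inequality and Lebesgue-point estimates (cf.\ the computations around \eqref{mest2} and \eqref{mest4}).

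Second, the Korn--Poincar\'e inequality from Appendix~\ref{sec: aux-app} plays no role in this proof. The interface jump across $\partial Q(x_0,\delta\varepsilon)$ arising from the hard gluing $\widetilde v_\varepsilon=\widetilde u$ inside and $\widetilde v_\varepsilon=u$ outside is controlled not by Korn--Poincar\'e but by the continuity of the trace operator under \emph{strict} convergence in $BD$ (Proposition~\ref{traceB}): one shows that the rescaled functions $u_{\varepsilon,\delta}(y)=\tfrac{u(x_0+\delta\varepsilon y)-u(x_0)}{\delta\varepsilon}$ (respectively $u(x_0+\delta\varepsilon y)$ in the jump case) converge strictly on $Q$ to $\nabla u(x_0)y$ (respectively $v_{x_0,u^+(x_0),u^-(x_0),\nu}$), whence the trace mismatch on $\partial Q$ vanishes. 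This is the genuine substitute for the $BV$-theory cut-off argument, and it is the point you should focus on. The $m^\star$ machinery (Lemma~\ref{Lemma 4.2FHP} and Theorem~\ref{thm4.3FHP}) packages the comparison $\mathcal F\leftrightarrow m$ abstractly, but the underlying estimate is the one just described.
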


\color{black}
  The result above will follow as a corollary of the subsequent result which identifies bulk and jump densities for  all functions in $BD(\Omega)$. 
  
\begin{proposition}\label{GMthmHSD}
	Let $p \geq  1$  and let $\mathcal F \colon StBD^p(\Omega) \to [0, +\infty]$ be a functional satisfying {\rm (H1)}--{\rm (H4)}. Then, for every $(g,G) \in StBD^p(\Omega)$ it holds that 
\begin{align}\label{fproof}
	\frac{ {\rm d} \mathcal F(g, G;\cdot)}{{\rm d} \mathcal L^N} (x_0)=	
	f\big(x_0, g(x_0),   \nabla   g(x_0), G(x_0)\big)
	\end{align}
    for $\mathcal L^N$-a.e.\ $x_0 \in \Omega$,  and 
\begin{align}\label{Phiproof}
\frac{{\rm d} \mathcal F(g, G; \cdot)}{{\rm d} \mathcal H^{N-1}\lfloor{J_g}}(x_0) = \Phi\big(x_0, g^+(x_0), g^-(x_0), \nu_g(x_0)\big)
\end{align}
 for $\mathcal H^{N-1}$-a.e.\ $x_0 \in J_u$,  where $f$ and $\Phi$ are given by \eqref{f} and \eqref{Phi}, respectively.

\end{proposition}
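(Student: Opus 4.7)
The plan is to follow the \emph{global method for relaxation} of Bouchitt\'e--Fonseca--Mascarenhas \cite{BFM, BFLM}, suitably adapted to the $BD \times L^p$ setting for structured deformations (in the spirit of \cite{CFVG, BMZ, Conti-Focardi-Iurlano:15, crismale-friedrich-solombrino}). For brevity, set $Q_*(x_0, \varepsilon) = Q(x_0, \varepsilon)$ and $d_* = N$ in the bulk case, and $Q_*(x_0, \varepsilon) = Q_{\nu_g(x_0)}(x_0, \varepsilon)$ and $d_* = N-1$ in the jump case. By (H1) the set function $\mathcal F(g, G; \cdot)$ is a Radon measure on $\Omega$, so by Besicovitch differentiation its Radon--Nikodym derivatives with respect to $\mathcal L^N$ and to $\mathcal H^{N-1}\lfloor J_g$ exist and equal $\lim_{\varepsilon \to 0^+} \mathcal F(g, G; Q_*(x_0, \varepsilon))/\varepsilon^{d_*}$ at almost every $x_0$ in the respective set. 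It is standard to further reduce to ``good'' points $x_0$: a Lebesgue point of $G$ and an approximate-differentiability point of $g$ with $|E^s g|(Q(x_0, \varepsilon)) = o(\varepsilon^N)$ in the bulk case; a $BD$-approximate jump point of $g$ with $|Eg|(Q_\nu(x_0, \varepsilon)) = O(\varepsilon^{N-1})$ and $\|G\|_{L^p(Q_\nu(x_0,\varepsilon))}^p = o(\varepsilon^{N-1})$ in the jump case.

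The heart of the argument is to show that $\mathcal F(g, G; \cdot)$ and $m(g, G; \cdot)$ have the same Radon--Nikodym derivatives at good points. The inequality $m \leq \mathcal F$ is immediate, $(g, G)$ being itself a competitor in $\mathcal C_{StBD^p}(g, G; O)$. For the converse, choose a radius $\varepsilon$ in the set of all but countably many radii for which $\mathcal F(g, G; \partial Q_*(x_0, \varepsilon)) = 0$ and $|Eg|(\partial Q_*(x_0, \varepsilon)) = 0$, take a near-minimizing sequence $\{(u_n, U_n)\} \subset \mathcal C_{StBD^p}(g, G; Q_*(x_0, \varepsilon))$ of $m$, and splice with $(g, G)$ off $Q_*(x_0, \varepsilon)$ to obtain $(\tilde u_n, \tilde U_n) \in StBD^p(\Omega)$. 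The boundary matching $u_n = g$ near $\partial Q_*(x_0, \varepsilon)$ removes spurious interface jumps, while (H4) combined with the zero-mean constraint $\int_{Q_*(x_0, \varepsilon)}(G - U_n)\, dx = 0$ yields $BD \times L^p$-boundedness and, via Rellich compactness in $BD$, a subsequential $StBD^p$-limit $(\bar u, \bar U) \in \mathcal C_{StBD^p}(g, G; Q_*(x_0, \varepsilon))$. Invoking (H1)--(H3) together with the lower semicontinuity (H2), and splitting the ambient energy between $Q_*(x_0, \varepsilon)$ and its complement, yields the reverse asymptotic inequality between $\mathcal F$ and $m$ at $x_0$.

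The final step is a blow-up at the tangent data: replace $(g, G)$ in $m(g, G; Q_*(x_0, \varepsilon))$ by $(\ell_*, B_*) := (\ell_{x_0, g(x_0), \nabla g(x_0)}, G(x_0))$ in the bulk case, and by $(\ell_*, B_*) := (v_{x_0, g^+(x_0), g^-(x_0), \nu_g(x_0)}, 0)$ in the jump case. Given a near-optimal competitor $(u, U)$ for the tangent problem $m(\ell_*, B_*; Q_*(x_0, \varepsilon))$, the linearly shifted pair $(u + (g - \ell_*), U + (G - B_*))$ belongs to $\mathcal C_{StBD^p}(g, G; Q_*(x_0, \varepsilon))$, both the boundary match and the zero-mean constraint transferring by linearity. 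Hypothesis (H4) then controls the resulting energy difference by $|E(g - \ell_*)|(Q_*(x_0, \varepsilon)) + \|G - B_*\|_{L^p(Q_*(x_0, \varepsilon))}^p + \mathcal L^N(Q_*(x_0, \varepsilon))$, which, divided by $\varepsilon^{d_*}$, tends to zero at a good point; by the definitions \eqref{f} and \eqref{Phi} this delivers \eqref{fproof} and \eqref{Phiproof}.

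The main obstacle is the passage to the limit in the gluing construction: to ensure that $(\bar u, \bar U)$ is a legitimate competitor for $m(g, G; Q_*(x_0, \varepsilon))$, the trace of $\bar u$ on $\partial Q_*(x_0, \varepsilon)$ must survive the weak $BD$-convergence and still agree with $g$. This rests on a Korn--Poincar\'e-type inequality in $BD$ with prescribed boundary behavior, which is the content of Appendix~\ref{sec: aux-app} and rules out the ``rigid-motion drift'' familiar from $BD$-problems. In the jump case, one additionally relies on the fine structural properties of $BD$ functions at jump points collected in Appendix~\ref{BD}.
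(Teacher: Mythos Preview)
Your overall strategy follows the global method, but two of the three steps contain genuine gaps.

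\textbf{The comparison $\mathcal F \sim m$.} Your argument for the reverse inequality does not close. Taking near-minimizers $(u_n,U_n)$ of $m$, passing to a weak limit $(\bar u,\bar U)$, and invoking (H2) gives $\mathcal F(\bar u,\bar U;Q_*)\le m(g,G;Q_*)$, but this says nothing about $\mathcal F(g,G;Q_*)$ since in general $(\bar u,\bar U)\neq(g,G)$. The paper instead passes through the auxiliary functional $m^\star$ (the Carath\'eodory construction over fine Vitali covers by cubes): one shows $\mathcal F=m^\star$ by a covering argument (Lemma~\ref{Lemma 4.2FHP}) and then that $m$ and $\mathcal F$ have the same Radon--Nikodym derivatives with respect to $\mu=\mathcal L^N+|E^s g|$ (Theorem~\ref{thm4.3FHP}), using the inner-continuity estimate of Lemma~\ref{estHSDL}. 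There is no direct compactness/LSC shortcut here.

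\textbf{The blow-up comparison.} Your ``linear shift'' $(u+(g-\ell_*),\,U+(G-B_*))$ does produce an admissible competitor, but the claim that (H4) controls $\mathcal F(u+(g-\ell_*),U+(G-B_*);Q_*)-\mathcal F(u,U;Q_*)$ by $|E(g-\ell_*)|+\|G-B_*\|_{L^p}^p$ is unfounded: (H4) is a growth condition, not a Lipschitz or subadditivity estimate for $\mathcal F$. The paper avoids this by a \emph{nested-cube} construction: fix $0<\delta<1$, take a near-optimal competitor $(\widetilde u,\widetilde U)$ for $m(\ell_*,B_*;Q_*(x_0,\delta\varepsilon))$, and glue with $(g,G)$ on the annulus $Q_*(x_0,\varepsilon)\setminus Q_*(x_0,\delta\varepsilon)$. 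Then (H4) is applied only on the annulus, where the error is $C(|Eg|(Q_*(x_0,\varepsilon)\setminus\overline{Q_*(x_0,\delta\varepsilon)})+\text{trace mismatch on }\partial Q_*(x_0,\delta\varepsilon)+\text{small $L^p$ terms})$, and one takes $\delta\to 1^-$ after $\varepsilon\to 0^+$. The trace-mismatch term is handled by showing that the rescaled functions converge \emph{strictly} in $BD$ to the tangent map, so that trace continuity (Proposition~\ref{traceB}) applies. Note that strict convergence, not weak convergence, is the mechanism here; the Korn--Poincar\'e inequality in Appendix~\ref{sec: aux-app} is used for the linearization in Section~\ref{proof:lin}, not for this proposition.
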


\begin{remark}[Invariance]
\label{traslinv}
(i) It follows immediately from the definitions given in \eqref{f}--\eqref{Phi} 
that, if $\mathcal F$ is translation invariant in the first variable, i.e., if
$$\mathcal F(g+a, G;O)= \mathcal F(g, G;O) \quad \text{for every  $((g,G),O) \in StBD^p(\Omega)\times \mathcal O(\Omega)$ \color{black} and for every $a\in \mathbb R^N$,}$$ 
then the function $f$ in \eqref{f} does not depend on $a$ and the function $\Phi$ in \eqref{Phi} does not depend on $\lambda$ and $\theta$ but only on the difference $\lambda- \theta$. Indeed, in this case we conclude that
$$f(x_0, a, \xi, B) = f(x_0, 0, \xi, B) \quad \mbox{and}
\quad
\Phi(x_0, \lambda, \theta, \nu) = \Phi(x_0, \lambda - \theta,0, \nu)$$
for all $x_0\in \Omega$, $ a, \theta,\lambda \in \mathbb R^N$,   $\xi\in \mathbb R^{N \times N}$, $B \in \mathbb R^{N\times N}_{\rm sym}$,   and $\nu \in \mathbb S^{N-1}$. With an abuse of notation, we write
$$f(x_0,\xi, B) = f(x_0, 0, \xi, B)  \quad \mbox{and}
\quad \Phi(x_0, \lambda - \theta, \nu) = \Phi(x_0, \lambda - \theta,0, \nu).$$ 
(ii) Arguing as in \cite[Lemma 4.3.3]{BFM}, one can further  prove that   both $f$ in \eqref{f} and $\Phi$ in \eqref{Phi} do not depend on the spatial variable $x$, provided  $\mathcal F(\tau_y g,\tau_y G;\tau_y O)= \mathcal F(g,G;O)$ for every $(g,G)\in StBD^p(\Omega)$ such that $\tau_y O$ is an open subset of $\Omega$.

 (iii) If $\mathcal F (g+ M x, G;O)= \mathcal F (g, G; O)$ for every  $((g,G),O) \in StBD^p(\Omega)\times \mathcal O(\Omega)$ and   for every $M \in \mathbb R^{N \times N}_{\rm skew}$,  then $f$ depends only on elements in $ \mathbb R^{N \times N}_{\rm sym}$  in the third variable, namely
$$f(x_0, a, \xi, B) = f\left(x_0, a, \frac{\xi+\xi^T}{2}, B\right).$$
\end{remark}



    The same proof as the one  of Proposition  \ref{GMthmHSD} allows to obtain a representation  for functionals defined only on $StSBD^p(\Omega)$. 

\begin{corollary}\label{remStSBDrep}
Let $p \geq  1$  and let $\mathcal F\colon StSBD^p(\Omega) \to [0, +\infty]$ be a functional satisfying {\rm (H1)}--{\rm (H4)} on $StSBD^p(\Omega)$. Then,   for every $(u,U) \in StSBD^p(\Omega)$  and $O \in \mathcal O(\Omega)$,  it holds that
	\begin{align*}
	\mathcal F(g, G;O)= \int_O \!\tilde f\big(x,g(x),   \nabla   g(x), G(x)\big)\,\de x +\int_{O \cap J_g}\!\!\!\!\!\tilde \Phi\big(x, g^+(x), g^-(x),\nu_g(x)\big) 
	\,\de \mathcal H^{N-1}(x),\end{align*}
where
\begin{align*}
\tilde f(x_0, a, \xi, B):= \limsup_{\varepsilon \to 0^+}\frac{\tilde{m}(\ell_{x_0,a,\xi}, B; Q(x_0,\varepsilon))}{\varepsilon^N},
	\end{align*}
\begin{align*} 
	\tilde\Phi(x_0, \lambda, \theta, \nu):= \limsup_{\varepsilon \to 0^+}\frac{\tilde{m}(v_{x_0,\lambda, \theta,\nu}, 0; Q_{\nu}(x_0,\varepsilon))}{\varepsilon ^{N-1}},
	\end{align*}
for all $x_0\in \Omega$, $ a, \lambda, \theta \in \mathbb R^N$,   $\xi \in \mathbb R^{N\times N}$, $ B \in \mathbb{R}_{{\rm sym}}^{N\times N}$,   $\nu \in \mathbb S^{N-1}$, 
where  we define     $ \tilde m\colon StSBD^p(\Omega)\times \mathcal O_\infty(\Omega)$ as 
\begin{align*} 
\tilde m(g, G;O):=\inf\Big\{\mathcal F(u, U; O): (u, U)\in \mathcal C_{StSBD^p}(g, G; O)\Big\},
\end{align*}
 with 
\begin{align*}
\mathcal C_{StSBD^p}(g, G; O):=\Big\{(u, U)\in StSBD^p(\Omega) \colon \,  u=g \hbox{ in a neighborhood of } \partial O,  
	 \int_O (G-U) \,\de x=0  \Big\}, 	 
\end{align*}
for  $(g, G)\in StSBD^p(\Omega)$ and 
$O \in \mathcal O_{\infty}(\Omega)$. 
\end{corollary}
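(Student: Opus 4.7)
The plan is to replicate the proof of Proposition \ref{GMthmHSD} verbatim, with the sole modifications being that every occurrence of $StBD^p$ is replaced by $StSBD^p$ and every occurrence of $m$ is replaced by $\tilde m$. The reason this works is that the global-method machinery only relies on the datum $(g,G)$ and a comparison of the functional with localized minimum problems; since $g \in SBD(\Omega)$ has no Cantor part, only a bulk and a jump density need to be identified.

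First, combining (H1), (H3), (H4) with the decomposition $Eg = \mathcal E g\, \mathcal L^N + [g] \odot \nu_g\, \mathcal H^{N-1}\res J_g$ shows that $\mathcal F(g,G;\cdot)$ is absolutely continuous with respect to $\mathcal L^N + \mathcal H^{N-1}\res J_g$. A Radon-Nikodym decomposition then reduces the proof to computing the two densities appearing in the analogues of \eqref{fproof} and \eqref{Phiproof}, with $\tilde f$ and $\tilde \Phi$ in place of $f$ and $\Phi$.

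For the bulk density, I would blow up at an $\mathcal L^N$-a.e.\ point $x_0$ that is simultaneously a Lebesgue point of $G$, an approximate differentiability point of $g$ in the $BD$ sense, and a Lebesgue point of $\frac{\de \mathcal F(g,G;\cdot)}{\de \mathcal L^N}$. The inequality $\leq$ comes from an almost-minimizer of $\tilde m(\ell_{x_0,g(x_0),\nabla g(x_0)}, G(x_0); Q(x_0,\varepsilon))$, glued against $g$ in a thin layer near $\partial Q(x_0,\varepsilon)$; the opposite inequality comes from (H2) applied to the rescaled sequence, whose limit is $\bigl(\ell_{x_0,g(x_0),\nabla g(x_0)}, G(x_0)\bigr)$. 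An analogous blow-up at $\mathcal H^{N-1}$-a.e.\ $x_0 \in J_g$, whose limit profile is $\bigl(v_{x_0,g^+(x_0),g^-(x_0),\nu_g(x_0)}, 0\bigr)$, yields the surface identity; the $L^p$-contribution of $G$ at scale $\varepsilon^{N-1}$ is absorbed thanks to the mean-zero condition $\int_{Q_\nu(x_0,\varepsilon)} (G-U)\,\de x = 0$ in the definition of $\tilde m$.

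The main obstacle is to check that every cut-off, gluing, and diagonal construction used in the global method preserves the $SBD$ constraint on competitors. This is straightforward, since the building blocks (affine maps, step functions, constants) all lie in $SBD$, Lipschitz cut-offs of $SBD$ fields produce again $SBD$ fields (no Cantor part is generated), and the approximation result of Theorem \ref{appTHMh}, in the simplified form of Remark \ref{remStBDconv}, already furnishes $SBD$ approximations with the prescribed $\mathcal E u_n = G$. Consequently, every competitor arising in the proof of Proposition \ref{GMthmHSD} can be taken inside $\mathcal C_{StSBD^p}(g,G;O)$, so $\tilde m$ plays in the $SBD$ setting exactly the role that $m$ plays in the $BD$ setting, and the computation of the densities proceeds unchanged.
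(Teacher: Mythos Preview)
Your proposal is correct and mirrors the paper's own proof, which simply observes that Lemma \ref{estHSDL}, Lemma \ref{Lemma 4.2FHP}, and Theorem \ref{thm4.3FHP} hold with $\tilde m$, $\mathcal C_{StSBD^p}$ in place of $m$, $\mathcal C_{StBD^p}$, after which the argument of Proposition \ref{GMthmHSD} carries over unchanged. One small correction to your sketch: in the paper the ``opposite inequality'' for the bulk (and surface) density is not obtained via (H2) on a rescaled sequence, but by the symmetric competitor construction---interchanging the roles of $(g,G)$ and $(\ell_{x_0},G(x_0))$---after Theorem \ref{thm4.3FHP} has already matched $\lim \mathcal F(g,G;Q(x_0,\varepsilon))/\varepsilon^N$ with $\lim \tilde m(g,G;Q(x_0,\varepsilon))/\varepsilon^N$.
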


 The proof of these results will be given in Section \ref{gm}.

\medskip

\subsection{Relaxation}\label{appl}

As an application of the  integral representation obtained in  Theorem \ref{GMthmHSDsbd}    and Corollary~\ref{remStSBDrep}, we now present a relaxation result for structured deformations in the spirit of \cite{CF1997}. In particular, we essentially adopt the  assumptions from  
 \cite{CF1997}, suitably adapted to the setting of linear elasticity. 
 
 Given two nonnegative functions $W\colon\Omega\times\mathbb R^{N\times N}_{\rm sym}\to[0,+\infty)$ and $\psi\colon\Omega\times\R{N}\times\S{N-1}\to[0,+\infty)$, we consider the initial bulk-surface energy   defined by
\begin{equation}\label{103}
	F(u)\coloneqq \int_\Omega W\big(x,\mathcal E u(x)\big)\,\de x+\int_{\Omega\cap J_u} \psi\big(x,[u](x), \nu_u(x)\big)\,\de\cH^{N-1}(x)
\end{equation}
 for  displacements   $u\in SBD(\Omega)$. Then, as justified by the Approximation Theorem \ref{appTHMh}, we assign an energy to a structured deformation $(g,G)\in  StBD^p(\Omega)$ 
via
\begin{equation}\label{102}
	I_p(g,G)\coloneqq \inf\Big\{\liminf_{n\to\infty} F(u_n)\colon u_n \in SBD(\Omega), \ u_n\wSD{*}(g,G)\Big\},
\end{equation}
 where  $\wSD{*}$ denotes the convergence defined in Definition \ref{StBDconv}. 

Our goal is to  obtain an integral representation result for $I_p$, by means of  Theorem \ref{GMthmHSDsbd},  under a similar set of hypotheses on $W$ and $\psi$ as those considered in \cite{CF1997} and \cite{MMOZ} for the $SBV$ setting,
but requiring only measurability rather than uniform continuity of $W$ in the $x$ variable.  Precisely, we assume that $W\colon\Omega \times \mathbb R^{N\times N}_{\rm sym}\to[0,+\infty)$ is a Carath\'eodory function and $\psi\colon \Omega \times \mathbb R^N \times \mathbb S^{N-1} \to [0,+\infty) $ is continuous and such that the following conditions hold: 

\setlist[enumerate]{label=(W\arabic*)}
\begin{enumerate}
\item \label{(W1)_p}  ($p$-Lipschitz continuity) there exists $C_W >0$ such that, for a.e.\ $x\in\Omega$ and $A_1,A_2 \in \mathbb R^{N\times N}_{\rm sym}$,
	\begin{equation*}
		|W(x,A_1) - W(x,A_2)| \leq C_W |A_1 - A_2| \big(1+|A_1|^{p-1}+|A_2|^{p-1}\big);
	\end{equation*}
\item \label{W3} (control from above) there exists $A_0 \in \mathbb R^{N\times N}_{\rm sym}$ such that 
$W(\cdot, A_0)\in L^\infty(\Omega)$;  
\item \label{W4} ($p$-growth from below)  there exists $c_W>0$ such that, for a.e.\ $x \in \Omega$ and every $A \in  \mathbb R^{N\times N}_{\rm sym}$,  
	\begin{equation*}
		c_W |A|^{p}-\frac{1}{c_W}\leq W(x,A);
	\end{equation*}
	\end{enumerate}
\setlist[enumerate]{label=($\psi$\arabic*)}
\begin{enumerate} 
\item\label{psi_0} (symmetry) for every $x \in \Omega$, $\lambda \in \R{N}$, and 
$\nu \in \mathbb S^{N-1}$, 
	\begin{equation*}
		\psi (x, \lambda, \nu)= \psi (x,-\lambda, -\nu);
	\end{equation*}
\item\label{(psi1)} there exist $c_\psi,C_\psi > 0$ such that, for all $x\in\Omega$, $\lambda \in \R{N}$, and $\nu \in \S{N-1}$,
	\begin{equation*}
		c_\psi|\lambda| \leq \psi(x,\lambda, \nu) \leq C_\psi|\lambda |;
	\end{equation*}
\item\label{(psi2)} (positive $1$-homogeneity) for all $x\in\Omega$, $\lambda \in \R{N}$, $\nu \in \S{N-1}$, and $t >0$
		$$\psi(x,t\lambda, \nu) = t\psi(x, \lambda, \nu);$$
\item \label{(psi3)} (sub-additivity) for all $x\in\Omega$, $\lambda_1,\lambda_2 \in \mathbb R^N$, and $\nu \in \S{N-1}$,
		\begin{equation*}
			\psi(x, \lambda_1 + \lambda_2, \nu) \leq \psi(x,\lambda_1, \nu) +\psi(x,\lambda_2, \nu);
		\end{equation*}
\item \label{(psi4)} (continuity in $x$)  there exists a continuous function $\omega_\psi\colon[0,+\infty)\to[0,+\infty)$ with $\omega_\psi(s)\to 0$ as $s\to 0^+$ such that, for every $x_0,x_1\in\Omega$, $\lambda \in \mathbb R^N$, and $\nu \in \S{N-1}$,
		\begin{equation*}
	|\psi(x_1,\lambda,\nu)-\psi(x_0,\lambda,\nu)|\leq\omega_\psi(|x_1-x_0|)|\lambda|.
		\end{equation*}
\end{enumerate}

In order to formulate the energy densities of the relaxation, we need to introduce some further notation.  Recalling   \eqref{stepfun}, we set  $v_{\lambda,\nu} := v_{0,\lambda,0, \nu}$ for $\lambda\in\R{N}$ and $\nu\in\S{N-1}$, i.e., 
\begin{equation}\label{909}
v_{\lambda,\nu}(x)\coloneqq 
\begin{cases}
\lambda & \text{if $x\cdot\nu>0$,} \\
0 & \text{if $x\cdot\nu\leq 0$.}
\end{cases}
\end{equation}
 Recall also $Q =(0,1)^N$ and the rotated cube $Q_\nu$ for  $\nu\in\S{N-1}$.  For $A,B\in\R{N\times N}_{\rm sym}$,  and for  $\lambda\in\R{N}$, $\nu\in\S{N-1}$, we consider the classes  \begin{equation}\label{T001}
\cC_p^{\bulk}(A,B)\coloneqq \bigg\{u\in SBD(Q) \colon \,  u(x)=Ax \text{ for $x \in \partial Q$},   \  \int_Q \mathcal E u\,\de x=B, \  |\mathcal E u|\in L^p(Q) \bigg\}, 
\end{equation}
and  
\begin{align*}
\cC_p^\surface(\lambda,\nu)&\coloneqq \Big\{u\in SBD(Q_\nu)\colon  \,  u(x)=v_{\lambda,\nu}(x) \text{ for $x \in \partial Q_\nu$},  \  \mathcal E u(x)=0 \ \text{for $\cL^N$-a.e.~$x\in Q_\nu$}\Big\} \quad \text{for $p > 1$},\\
\cC_1^\surface(\lambda,\nu) &\coloneqq \Big\{u\in SBD(Q_\nu) \colon u(x)=v_{\lambda,\nu}(x) \text{ for $x \in \partial Q_\nu$}, \   \int_{Q_\nu}  \mathcal E u\,\de x=0\Big\} \quad \text{for $p = 1$}.
\end{align*}
Then, we define the densities $H_p$ and $h_p$  of the bulk and surface parts, respectively,  as  
\begin{equation}\label{906}
H_p(x_0,A,B)\coloneqq \limsup_{\varepsilon \to 0}\inf_{u\in\cC_p^\bulk(A,B)}\bigg\{  
\int_Q W(x_0+ \varepsilon y,\mathcal E u(y))\,\de y+\int_{Q\cap J_u  } \psi( x_0 ,[u](y),\nu_u(y))\,\de\cH^{N-1}(y)\bigg\},
\end{equation}
for all $x_0\in\Omega$ and $A,B\in\R{N\times N}_{\rm sym}$,
and, for all $x_0\in\Omega$, $\lambda\in\R{N}$,   $\nu\in\S{N-1}$,
\begin{equation}\label{907}
\!\!\! h_p(x_0,\lambda,\nu)\coloneqq \inf_{u\in\cC_p^\surface(\lambda,\nu)}\bigg\{ 
 \! \int_{Q_\nu\cap J_u}  \psi(x_0,[u](x),\nu_u(x))\,\de\cH^{N-1}(x)\bigg\} 
\end{equation}
 in the case  $p >1$, as well as, in the case $p=1$, 
\begin{equation}\label{intf2}
\!\!\! h_1(x_0,\lambda,\nu)\coloneqq \limsup_{\varepsilon \to 0}\inf_{u\in\cC_1^\surface(\lambda,\nu)} \bigg\{ \int_{Q_\nu} \varepsilon W\Big(x_0 + \varepsilon y, \frac{1}{\varepsilon} \mathcal E(u)(y)\Big) \, \de y
+ \!\! \int_{Q_\nu\cap J_u} \!\! \psi(x_0,[u](x),\nu_u(x))\,\de\cH^{N-1}(x)\bigg\}.
\end{equation}
 Note that the continuity assumption on $\psi$ in {\rm \ref{(psi4)}} allows to `freeze'  the $x$-variable in the surface term $\psi$. We also observe that for $p>1$ the surface density $h_p$ decouples from the bulk density in the sense that the definition in \eqref{907} does not depend on $W$.   Under this set of hypotheses, we prove the following result.

\begin{theorem}[Relaxation]\label{representation}
	Let $p\geq 1$. Consider $F$ given by \eqref{103} where  $W\colon\Omega\times\R{N\times N}_{\rm sym}\to[0,+\infty)$ is Carath\'eodory, $\psi\colon\Omega\times\R{N}\times\S{N-1}\to[0,+\infty)$ is continuous and they satisfy {\rm \ref{(W1)_p}}--{\rm \ref{(psi1)}}.  
 Let $I_p$ be defined by \eqref{102}.    	Then,  $I_p$ satisfies {\rm (H1)}--{\rm (H4)} and there exist 
	$f\colon \Omega \times \mathbb R_{\rm sym}^{N \times N} \times \mathbb R_{\rm sym}^{N \times N} \to [0,+\infty)$, and $\Phi\colon \Omega \times \mathbb R^N \times  \mathbb{S}^{N-1}  \to [0, +\infty)$ such that  the following holds: 
	
\begin{itemize}
\item[(i)]   For every $(g,G) \in StSBD^p(\Omega)$, it holds that 
\begin{align}\label{reprelax}
		I_p(g,G) =  \int_\Omega f\big(x,\mathcal E g(x),G(x)\big) \, \de x + 
		\int_{\Omega\cap J_g}\Phi\big(x,[g](x),\nu_g(x)\big) \, \de \mathcal H^{N-1}(x), 
		\end{align}
 where the relaxed energy densities are given by (recall \eqref{stepfun}) 
	\begin{align}\label{fdef}
		f(x_0, \xi, B):=	\limsup_{\varepsilon \to 0^+}\frac{m(  \ell_{x_0,0,\xi} ,  B; Q(x_0,\varepsilon))}{\varepsilon^N},
	\end{align}
	\begin{align}\label{Phidef}
		\Phi(x_0, \lambda,  \nu):= \limsup_{\varepsilon \to 0^+}\frac{m(  v_{x_0,\lambda,0,\nu},  0; Q_{\nu}(x_0,\varepsilon))}{\varepsilon ^{N-1}},
	\end{align}
	for all $x_0\in \Omega$, $  \lambda   \in \mathbb R^d$, 
	$\xi, B \in \mathbb R^{N \times N}_{\rm sym}$, and $\nu \in \mathbb S^{N-1}$,  where the functional $m\colon StBD^p(\Omega) 
	\times \mathcal O_\infty(\Omega)\to [0,+\infty)$ is given by \eqref{mdef} with $I_p$ in place of  $\mathcal F$.  
 \item[(ii)] If more generally    $(g,G) \in StBD^p(\Omega)$, we have 
     \begin{align}\label{frepSD}
	\frac{{\rm d} I_p(g, G;\cdot)}{{\rm d}  \mathcal L^N} (x_0)=	
	f(x_0, \mathcal E g(x_0), G(x_0)) \ \ \text{    for $\mathcal L^N$-a.e.\ $x_0 \in \Omega,$}
	\end{align}
\begin{align}
\label{PhirepBD}\frac{{\rm d}  I_p(g, G; \cdot)}{{\rm d}  \mathcal H^{N-1}\lfloor{J_g}}(x_0) = \Phi(x_0, [g](x_0), \nu_g(x_0)) \ \ \text{ for $\mathcal H^{N-1}$-a.e.\ $x_0 \in J_g$.}
\end{align}
 \item[(iii)]  If $\psi$ also satisfies {\rm \ref{(psi2)}}--{\rm \ref{(psi4)}},  then  we can identify 
\begin{align}\label{eq. effiproof}
f(x_0, \xi, B)= H_p(x_0, \xi, B)
\end{align}
    for every $x_0\in \Omega$, $\xi, B \in \mathbb R^{N\times N}_{\rm sym}$ and
\begin{align}\label{eq. psiiproof}
{\Phi(x_0, \lambda,  \nu) = h_p(x_0, \lambda, \nu)}
\end{align}
	for every $x_0\in \Omega$, $ \lambda   \in \mathbb R^N$, and 
	$\nu \in \mathbb S^{N-1}$, where $H_p$ and $h_p$ are the functions given in \eqref{906}--\eqref{intf2}. 
\end{itemize}
\end{theorem}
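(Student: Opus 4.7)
The strategy is to verify that $I_p$ satisfies the abstract hypotheses (H1)--(H4) and then invoke Proposition \ref{GMthmHSD} (for part (ii)) and Theorem \ref{GMthmHSDsbd} (for part (i)), so that the formulas \eqref{fdef}--\eqref{PhirepBD} follow at once. The essential work then concentrates in part (iii), where the abstractly defined densities $f$ and $\Phi$ must be identified with the explicit cell formulas $H_p$ and $h_p$.

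Verifying (H1)--(H4) is mostly routine. Locality (H3) is inherited from $F$; lower semicontinuity (H2) follows from the very definition of $I_p$ as a relaxation, by a standard diagonalization argument using metrizability of $SD$-convergence on norm-bounded sets (compactness being provided by the lower bound in (H4)). The upper bound in (H4) uses Theorem \ref{appTHMh} to extract an $SBD$-approximation of $(g,G)$ with $\mathcal E u_n = G$ and $|E u_n|(\Omega) \leq C(|E g|(\Omega)+\|G\|_{L^1(\Omega)})$; hypotheses \ref{(W1)_p}, \ref{W3}, \ref{(psi1)} then yield an $n$-uniform bound on $F(u_n)$, hence on $I_p(g,G)$. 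The lower bound follows from the coercivity \ref{W4}, together with \ref{(psi1)} and weak-$*$ lower semicontinuity of the total variation. For the measure property (H1), I would run a De Giorgi--Letta argument: subadditivity is obtained by combining admissible approximating sequences on overlapping open sets through a cut-off/gluing procedure tailored to $BD$ (invoking the Korn--Poincar\'e variant proved in Appendix \ref{sec: aux-app} to absorb interface mismatches), while inner regularity follows from the upper bound.

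For part (iii), I would rescale the minimization problem defining $m(\ell_{x_0,0,\xi},B;Q(x_0,\varepsilon))$, respectively $m(v_{x_0,\lambda,0,\nu},0;Q_\nu(x_0,\varepsilon))$, to the unit cube, using the continuity hypothesis \ref{(psi4)} to freeze the $x$-variable in $\psi$ at $x_0$ up to an $\omega_\psi(\varepsilon)$ error. The Approximation Theorem \ref{appTHMh} provides the bridge between $StBD^p$-competitors and $SBD$-competitors with prescribed $\mathcal E u_n = U$; conversely, any sequence admissible for $H_p$ or $h_p$ gives, via $SD$-convergence, a structured deformation competing in $m$. In the subcritical regime $p>1$, weak $L^p$ convergence of $\mathcal E u_n$ to zero in the surface problem forces $\mathcal E u = 0$ a.e.\ in the limit, which explains the rigid constraint in $\cC_p^\surface$ and the decoupling of $h_p$ from $W$. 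In the borderline case $p=1$, the bulk contribution scales as $\varepsilon$ and survives in the limit, producing the extra term $\varepsilon W(x_0+\varepsilon y,\varepsilon^{-1}\mathcal E u)$ in \eqref{intf2}.

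The main obstacle is precisely the density identification in (iii). The easy inequalities $f \leq H_p$ and $\Phi \leq h_p$ follow by direct construction, but the reverse bounds require passing from arbitrary $StBD^p$-competitors---where $U$ is only $L^p$ and decoupled from $\mathcal E u$---to $SBD$-competitors satisfying the rigid class constraints of $\cC_p^\bulk$ or $\cC_p^\surface$, while simultaneously preserving the affine or step boundary conditions and the integral constraint $\int (G-U)\,\de x = 0$. This calls for a boundary-layer correction near $\partial Q(x_0,\varepsilon)$ in the spirit of the $BV$-argument of \cite{KKMZ}, suitably adapted to the symmetric-gradient setting and to the presence of the infinitesimal rigid motions $\cR$, with the extra cost absorbed by means of \ref{(W1)_p} and \ref{(psi1)}.
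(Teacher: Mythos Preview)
Your overall strategy matches the paper's: verify (H1)--(H4), apply the global method, then identify the densities. Three points deserve comment.

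First, your gluing for (H1) via ``cut-off/gluing \ldots\ invoking the Korn--Poincar\'e variant'' is not how the paper (nor the standard $BD$ literature) proceeds. Cut-offs in $BD$ are delicate because $\mathcal E(\varphi u)$ involves $u\otimes\nabla\varphi$, not just $\mathcal E u$, so you do not get an error controlled by $|Eu|$ alone. The paper instead uses a Fubini/coarea argument: given admissible sequences $u_n$ on $O_2$ and $v_n$ on $O_3\setminus\overline{O_1}$, one finds a level set $\partial O_{\rho_0}$ along which the \emph{traces} satisfy $\int_{\partial O_{\rho_0}}|u_n-v_n|\,\de\mathcal H^{N-1}\to 0$, and glues $w_n = u_n\chi_{\overline{O}_{\rho_0}} + v_n\chi_{O_3\setminus\overline{O}_{\rho_0}}$ directly; the interfacial jump is then controlled by~\ref{(psi1)}. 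No Korn--Poincar\'e is needed here.

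Second, you omit a small but necessary step for (i): the abstract densities $f,\Phi$ from Theorem~\ref{GMthmHSDsbd} depend a priori on $(a,\xi)$ with $\xi\in\mathbb R^{N\times N}$, respectively on $(\lambda,\theta)$. One must check that $I_p$ is invariant under $g\mapsto g+a$ and $g\mapsto g+Mx$ for $M\in\mathbb R^{N\times N}_{\rm skew}$ (immediate from the structure of $F$), and then invoke Remark~\ref{traslinv} to reduce to dependence on $\mathcal E g$ and $[g]$ only.

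Third, your outline of (iii) is correct in spirit, but the paper organizes it through an intermediate sequential formula $\widetilde H_p$ (and $\widetilde h_1$) and crucially uses Theorem~\ref{thm4.3FHP} to identify $\limsup_\varepsilon m/\varepsilon^N$ with $\limsup_\varepsilon I_p/\varepsilon^N$ at the target functions; the latter, after a change of variables, \emph{is} $\widetilde H_p$. The inequality $H_p\leq\widetilde H_p$ is the boundary-correction step you describe (following \cite[Prop.~3.1]{CF1997} but with the $BD$ trace-matching above in place of cut-offs), while $\widetilde H_p\leq H_p$ comes from $m\leq\widehat m$ since any $u\in\mathcal C_p^{\bulk}$ gives the competitor $(u,\mathcal E u)$ for $m$. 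For the surface part with $p>1$, the reverse inequality $\Phi\geq h_p$ uses Alberti's theorem to kill $\mathcal E v_k$ (not just weak convergence), followed by a Fubini argument to impose the boundary datum; this is a bit more than ``weak $L^p$ convergence to zero forces $\mathcal E u=0$''.
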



\begin{remark}\label{surfacewithrecession}
Suppose that $W$ is uniformly continuous in  $x$, with  a modulus of continuity $\omega_W$ such that for every $x,x_0 \in \Omega$, $A \in \mathbb R^{N\times N}$,
 \begin{align}\label{(W4)}
 |W(x,A)-W(x_0,A)|\leq \omega_W(|x-x_0|)(|A|+1).
 \end{align}
 Assume further  that  there exists $C>0$ and $  0<\alpha <1$ such that
\begin{equation}\label{rateWinfty}
\left|W^\infty(x,A)-\frac{W(x,t A)}{t}\right|\leq 
 C  \frac{|A|^{1-\alpha}}{t^\alpha}
\end{equation}
for every $x \in \Omega$, whenever $t > 0$ and $t |A| \geq 1$, 
where $W^\infty(x,A)$ denotes the  \textit{weak recession function} at infinity of $W$ with respect to the second variable, namely
\begin{equation*}
W^{\infty}(x,A) \coloneqq \limsup_{t\rightarrow +\infty} 
\frac {W(x,tA)}{t}  \quad \text{ for all $x \in \Omega$ and  $A\in\R{N\times N}_{\rm sym}$}. \end{equation*} 
Then,
{\eqref{907} and \eqref{intf2} can be jointly written as}
\begin{equation}\label{907p}
\!\!\! h_p(x_0,\lambda,\nu) = \inf\bigg\{ 
\delta_1(p) \!\! \int_{Q_\nu}  W^\infty(x_0,\mathcal E u(x))\,\de x+ \! \int_{Q_\nu\cap J_u}   \psi(x_0,[u](x),\nu_u(x))\,\de\cH^{N-1}(x)\colon  
u\in\cC_p^\surface(\lambda,\nu)\bigg\},
\end{equation}
where $\delta_1(p) = 1$ if $p=1$ and $\delta_1(p) = 0$ if $p\neq 1$. This means that the relaxed surface energy density depends on the recession function of $W$ only in the case $p=1$.  To see this, it is enough to estimate the limit in \eqref{intf2} using \eqref{(W4)} to ‘freeze’ the $x$-variable (similarly, as done in the surface term) and then obtaining the recession function $W^\infty$ by means of \eqref{rateWinfty} as in the proof of \cite[(2.12)]{CF1997}.  Note that \eqref{907p} also recovers  the case $p>1$ in  \eqref{907}.  
\end{remark}

\begin{remark}
    \label{remnox}
   Arguing as in \cite[Remark 3.10]{BFM}, under the assumption of Theorem \ref{representation} and the existence of  a  modulus of continuity on $W(\cdot,A)$,  the densities $f$ and $\Phi$ in \eqref{fdef} and \eqref{Phidef} turn out to be continuous with  respect to $x_0$. 
    
 Moreover,   if in Theorem \ref{representation}, the energy densities $W$ and $\psi$ in \eqref{103} do not depend on $x$, then arguments entirely similar to \cite[Lemma 3.7]{BDV} entail that the functional $I_p$ in \eqref{102} (extended as a set function to any open subset of $\Omega$)   satisfies
$I_p(\tau_y g,\tau_y G;\tau_y O)= I_p(g,G;O)$ for every $(g,G)\in StBD^p(\Omega)$ such that $\tau_y O$ is an open subset of $\Omega$.
  Then, by Remark \ref{traslinv}, $f$ and $\Phi$ defined in \eqref{fdef} and \eqref{Phidef}, respectively, do not depend on the first variable.
\end{remark}

\begin{proposition}[Properties of the relaxed energy densities]\label{thm_propdens}
Let $p\geq1$, and let $W$ and $\psi$ satisfy {\rm \ref{(W1)_p}}--{\rm \ref{(psi4)}}. 
The function $H_{p}$   defined in~\eqref{906} is $p$-Lipschitz continuous in the third component, namely for every $A\in \R{N \times N}_{\rm sym}$ there exists a constant $C>0$ such that for almost every $x\in\Omega$ and for every $B_1,B_2\in\R{N\times N}_{\rm sym}$
\begin{equation}\label{H_B}
|H_{p}(x,A,B_1)-H_{p}(x,A,B_2)|\leq C|B_1-B_2|(1+|B_1|^{p-1}+|B_2|^{p-1}).
\end{equation}
 Moreover,   there exist constants $\bar{c}_H,\overline{C}_{H}>0$ such that for almost every $x\in\Omega$ and for every $A,B\in\R{N\times N}_{\rm sym}$
\begin{equation}\label{JoE2.27}
\bar{c}_H (|A|+|B|^p)-\frac1{\bar{c}_H} \leq H_{p}(x,A,B) \leq \overline{C}_H (1+|A|+|B|^p).
\end{equation}
For $B\in\R{N\times N}_{\rm sym}$, let   $H_{p}^B \colon\Omega\times\R{N\times N}\to[0,+\infty)$ be defined by $(x,A)\mapsto H_{p}^B(x,A)\coloneqq H_{p}(x,A,B)$.
Then,
\begin{itemize}
\item[(i)] if $p>1$, then $(H_{p}^B, h_{p}) $ satisfy {\rm \ref{(W1)_p}}--{\rm \ref{(psi4)}};

\item[(ii)] if $p=1$, the function $H_{1}^B$  is Carath\'eodory satisfying {\rm \ref{(W1)_p}}--{\rm \ref{W4}} with $p=1$, and the function $h_{1}$ satisfies properties {\rm \ref{psi_0}} and {\rm \ref{(psi1)}}. 
\end{itemize}
\end{proposition}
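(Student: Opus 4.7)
My plan is to derive all parts of the proposition directly from the definitions \eqref{906}--\eqref{intf2}, by exhibiting explicit competitors in $\cC_p^{\bulk}(A,B)$ and $\cC_p^{\surface}(\lambda,\nu)$ and invoking the structural hypotheses \ref{(W1)_p}--\ref{(psi4)}. The single recurring technical ingredient will be a \emph{correction lemma}: for any $M\in\R{N\times N}_{\sym}$ there exists $\sigma\in SBD(Q)$ with $\sigma=0$ in a neighbourhood of $\partial Q$, $\int_Q\mathcal E\sigma\,\de x = M$, and both $\|\mathcal E\sigma\|_{L^p(Q)}$ and $\int_{J_\sigma}|[\sigma]|\,\de\cH^{N-1}$ bounded by $C|M|$. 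Such a $\sigma$ is to be constructed by the same Alberti-plus-piecewise-constant recipe used in the proof of Theorem~\ref{appTHMh}, localised on $Q$ and arranged so as to have trivial boundary trace.

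To prove \eqref{H_B}, I will take an $\eta$-almost minimiser $u_1\in\cC_p^{\bulk}(A,B_1)$ for $H_p(x,A,B_1)$ and set $u_2:=u_1+\sigma$ with $\int_Q\mathcal E\sigma=B_2-B_1$, which yields $u_2\in\cC_p^{\bulk}(A,B_2)$. Applying \ref{(W1)_p} pointwise, integrating, and combining with the a priori bound $\|\mathcal E u_1\|_{L^p(Q)}^p\lesssim H_p(x,A,B_1)+1$ coming from \ref{W4} and the near-minimality of $u_1$, the bulk difference is controlled by $C|B_2-B_1|(1+|B_1|^{p-1}+|B_2|^{p-1})$; the jump contribution of $\sigma$ is absorbed by \ref{(psi1)} into $C|B_2-B_1|$; symmetrising in $B_1\leftrightarrow B_2$ yields \eqref{H_B}. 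For \eqref{JoE2.27}, the upper bound will be obtained by testing with $u(x):=Bx+\sigma(x)$, where $\sigma$ is engineered so that $Bx+\sigma(x)=Ax$ on $\partial Q$ while $\int_Q\mathcal E u=B$, producing bulk cost $\lesssim 1+|B|^p$ and surface cost $\lesssim |A-B|$; the lower bound will follow from Jensen combined with \ref{W4} on the bulk, and from \ref{(psi1)} together with the constraint $\int_{J_u}[u]\odot\nu_u\,\de\cH^{N-1}=A-B$ on the surface, after absorbing a linear $c_\psi|B|$ into $c_W|B|^p$.

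Items (i) and (ii) will be obtained by recycling the same constructions. The $p$-Lipschitz continuity of $H_p^B(x,\cdot)$ in $A$ follows from the same $\sigma$-correction combined with an additional affine shift $(A_2-A_1)x$; \ref{W3} for $H_p^B$ is verified by choosing $A_0=B$ and using \ref{W3}--\ref{(W1)_p} for $W$ to conclude $W(\cdot,B)\in L^\infty$; the $p$-growth \ref{W4} is read off from \eqref{JoE2.27}; and Carath\'eodoricity of $H_p^B$ is standard since it is a $\limsup$ of infima of Carath\'eodory integrands. For $h_p$ in the case $p>1$, symmetry \ref{psi_0} follows by applying the reflection $x\mapsto -x$ to an admissible $u$ on $Q_\nu$; \ref{(psi1)} is the surface analogue of \eqref{JoE2.27}; \ref{(psi2)} comes from the rescaling $u\mapsto tu$ and \ref{(psi2)} for $\psi$; \ref{(psi3)} follows by summing near-optimal competitors for $\lambda_1$ and $\lambda_2$ and exploiting subadditivity of $\psi$ on the union of jump sets; \ref{(psi4)} follows by freezing $x$ in the integrand and applying \ref{(psi4)} for $\psi$. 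In the case $p=1$, only \ref{psi_0} and \ref{(psi1)} are claimed for $h_1$, and they are obtained by the same two arguments.

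The main obstacle I foresee is the correction lemma itself: producing $\sigma$ with the three requirements simultaneously (trivial trace, prescribed $\int_Q\mathcal E\sigma$, and sharp bound by $|M|$) while preserving the $SBD$ structure. This is exactly where Alberti's theorem (Theorem~\ref{Al}) and the piecewise-constant approximation (Lemma~\ref{ctap}) are indispensable, precisely as in the proof of Theorem~\ref{appTHMh}; once this lemma is in hand, every remaining claim reduces to a variational comparison.
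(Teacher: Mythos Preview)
Your plan is correct and is precisely the route the paper defers to in \cite[Theorem~2.10 and (2.26)--(2.27)]{BMMOZ2022}: variational comparison via explicit competitors, using a compactly supported $SBD$ ``correction'' to adjust the integral constraint $\int_Q\mathcal E u=B$, combined with the identity $\int_{J_u}[u]\odot\nu_u=A-B$ forced by the boundary datum.

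A few minor points where your sketch should be tightened. \emph{First}, the correction lemma is easier than you indicate: for $M\in\R{N\times N}_{\sym}$ the choice $\sigma(x)=2Mx\,\chi_{Q'}(x)$ with $Q'\Subset Q$, $|Q'|=\tfrac12$, already satisfies all four requirements, so Alberti is not needed for \eqref{H_B}. Where Alberti (equivalently, the approximation Theorem~\ref{StSBDappthm} applied to $(Ax,B)$) \emph{is} genuinely needed is the upper bound in \eqref{JoE2.27}: there you must produce a competitor with $\mathcal E u\equiv B$ a.e.\ so that the bulk stays $\lesssim 1+|B|^p$, and then fix the trace by truncation near $\partial Q$. \emph{Second}, for \ref{psi_0} of $h_p$ the reflection $x\mapsto -x$ does not send $\cC_p^{\surface}(\lambda,\nu)$ to $\cC_p^{\surface}(-\lambda,-\nu)$; the correct transformation is the translation $u\mapsto u-\lambda$, which leaves $\mathcal E u$, $[u]$, $\nu_u$ unchanged and maps $v_{\lambda,\nu}$ to $v_{-\lambda,-\nu}$. \emph{Third}, your claim that ``\ref{W4} is read off from \eqref{JoE2.27}'' only gives \emph{linear} coercivity of $H_p^B(x,\cdot)$ in $A$, and the upper bound in \eqref{JoE2.27} shows this is sharp; hence for $p>1$ the growth exponent in \ref{W4} for $H_p^B$ must be understood as $1$, not $p$ (consistent with the cited reference).
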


We note that Theorem \ref{representation} holds for functions in $BD(\Omega)$, but characterizes only the bulk and jump density. Now, we generalize the result by identifying also the Cantor part.  To this end,  we specify to $x$-independent energies, i.e, we consider
\begin{equation}\label{103bis}
	F(u)\coloneqq \int_\Omega W(\mathcal E u(x))\,\de x+\int_{\Omega\cap J_u} \psi\big([u](x), \nu_u(x)\big)\,\de\cH^{N-1}(x),
\end{equation}
where $W\colon \R{N\times N}_{\rm sym}\to[0,+\infty)$ and $\psi\colon \R{N}\times\S{N-1}\to[0,+\infty)$ are continuous  and satisfy {\rm \ref{(W1)_p}}--{\rm \ref{(psi4)}} for $p \geq 1$. 
  We have the following result.

\begin{theorem}[Relaxation with Cantor part]\label{representationbis}
 Consider $F$ given by \eqref{103bis},  where   $W\colon \R{N\times N}_{\rm sym}\to[0,+\infty)$ and $\psi\colon \R{N}\times\S{N-1}\to[0,+\infty)$ are continuous and satisfy {\rm \ref{(W1)_p}}--{\rm \ref{(psi4)}} for  $p\geq 1$.  Let $(g,G) \in StBD^{p}(\Omega)$,  and assume that $ I_p (g,G)$ is defined by \eqref{102}. 	Then, 
	\begin{align}\label{reprelaxbis}
 I_p(g,G) =  \int_\Omega H_p(\mathcal E g ,G ) \, \de x + 
		\int_{\Omega\cap J_g}h_p([g] ,\nu_g) \, \de \mathcal H^{N-1}  + \int_{\Omega} H_p^\infty\left(\frac{d E^c g}{d |E^c g|} ,0\right) d |E^c g|, 
		\end{align}
	 where the relaxed energy densities are given by
    \eqref{906}--\eqref{intf2}, and  the weak recession  function of $H_p(\cdot,0)$ (i.e., 
    $H_p^\infty(A,0):=\limsup_{t \to +\infty} \frac{H_p(tA,0)- H_p(0,0)}{t}$) 
    in the bulk, surface, and Cantor part,  respectively. \color{black}
  
\end{theorem}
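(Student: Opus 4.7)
The plan is to reduce the statement to Theorem \ref{representation} and then add a dedicated blow-up argument to identify the Cantor contribution. Since $W$ and $\psi$ are $x$-independent, Remark \ref{remnox} gives that the densities $f$ and $\Phi$ from \eqref{fdef} and \eqref{Phidef} do not depend on the spatial variable, and Theorem \ref{representation}(iii) identifies $f(\xi,B)=H_p(\xi,B)$ and $\Phi(\lambda,\nu)=h_p(\lambda,\nu)$. Combining with part (ii) of the same theorem, i.e.\ \eqref{frepSD}--\eqref{PhirepBD}, the Radon--Nikodym derivatives of the measure $I_p(g,G;\cdot)$ with respect to $\mathcal L^N$ and $\mathcal H^{N-1}\lfloor J_g$ are precisely $H_p(\mathcal E g,G)$ and $h_p([g],\nu_g)$, respectively. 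This already accounts for the bulk and surface integrals in \eqref{reprelaxbis}.

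The genuinely new content is the identification of the singular part of $I_p(g,G;\cdot)$ with respect to $\mathcal L^N+\mathcal H^{N-1}\lfloor J_g$, namely the Cantor contribution. By the Lebesgue--Radon--Nikodym theorem it suffices to prove that for $|E^c g|$-a.e.\ $x_0\in\Omega$,
$$\frac{\de I_p(g,G;\cdot)}{\de|E^c g|}(x_0)=H_p^\infty\Bigl(\tfrac{\de E^c g}{\de|E^c g|}(x_0),0\Bigr).$$
I would fix a point $x_0$ at which the Besicovitch derivative $A_0:=\tfrac{\de E^c g}{\de|E^c g|}(x_0)$ exists, $\lim_{\varepsilon\to 0}\varepsilon^N/|E^c g|(Q(x_0,\varepsilon))=0$ and $\lim_{\varepsilon\to 0}\mathcal H^{N-1}(J_g\cap Q(x_0,\varepsilon))/|E^c g|(Q(x_0,\varepsilon))=0$ (all of which hold $|E^c g|$-a.e., by standard differentiation of measures). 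At such $x_0$, the upper bound in (H4) together with the decomposition of $|Eg|$ show that $I_p(g,G;Q(x_0,\varepsilon))/|E^c g|(Q(x_0,\varepsilon))$ is bounded.

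To evaluate the limit, I would repeat the blow-up scheme of Proposition \ref{GMthmHSD} (in the spirit of \cite{BFM,BFLM,CFVG,Conti-Focardi-Iurlano:15}), now rescaling the minimum problem $m$ with the anisotropic factor $t_\varepsilon:=|E^c g|(Q(x_0,\varepsilon))/\varepsilon^{N-1}\to+\infty$ that matches the Cantor concentration rate. The appropriate blow-up of $g$ around $x_0$ converges in $BD$ to an affine map whose symmetric gradient equals $A_0$, while $G$ blows up to zero in $L^p$ (thanks to absolute continuity). Testing the minimum problem with a competitor realizing this blow-up profile produces, in the limit, the ratio $H_p(t_\varepsilon A_0,0)/t_\varepsilon$, and the required identification $H_p^\infty(A_0,0)$ follows from Proposition \ref{thm_propdens}, which provides the $p$-Lipschitz estimate \eqref{H_B} and the linear/$p$-growth bound \eqref{JoE2.27}. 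These properties yield a rate condition on $H_p$ analogous to \eqref{rateWinfty} that is the key ingredient to pass to the limit in the bulk term $\int_Q H_p(t_\varepsilon \mathcal E u,\,G)\,\de x$.

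The main obstacle will be the blow-up analysis in $BD$ itself: contrary to the $BV$ case, the Cantor part $E^c g$ is a matrix-valued measure constrained to symmetric form, so the blow-up profiles must be controlled via a $BD$-compactness argument, relying on Korn--Poincar\'e-type estimates collected in Appendix \ref{sec: aux-app} to extract a limit whose symmetric gradient is exactly $A_0|E^c g|$. A secondary subtlety is reconciling the mean-value constraint $\int_{Q(x_0,\varepsilon)}(G-U)\,\de x=0$ in the class $\mathcal C_{StBD^p}$ with the singular blow-up: one must show that a suitable admissible competitor whose symmetric gradient concentrates in the direction $A_0$ can be constructed without affecting the $L^p$-bound on $U$, relying on the fact that $G$ is absolutely continuous and hence negligible at the chosen $|E^c g|$-Lebesgue points.
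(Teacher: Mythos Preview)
Your identification of the bulk and surface contributions via Theorem \ref{representation}(ii)--(iii) and Remark \ref{remnox} matches the paper exactly. The difference is entirely in how the Cantor density is obtained.

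The paper does \emph{not} carry out a direct blow-up at Cantor points. Instead it proceeds in two short steps. First, it shows that for $|E^c g|$-a.e.\ $x_0$ one has $\tfrac{\de I_p(g,G;\cdot)}{\de|E^c g|}(x_0)=\tfrac{\de I_p(g,0;\cdot)}{\de|E^c g|}(x_0)$: given a recovery sequence $\{u_n\}$ for $I_p(g,G;O)$, one adds a sequence $\{v_n\}\subset SBV$ from Theorem \ref{silhavy_app} with $\nabla v_n=-G$ and $|Dv_n|(O)\le C\|G\|_{L^1(O)}$, so that $\{u_n+v_n\}$ is admissible for $I_p(g,0;O)$; using {\rm\ref{(W1)_p}}, {\rm\ref{(psi1)}}, {\rm\ref{(psi3)}} and H\"older one bounds $I_p(g,0;O)-I_p(g,G;O)$ by $C(\|G\|_{L^p(O)}+\|G\|_{L^p(O)}^p+\mathcal L^N(O))$, which vanishes after dividing by $|E^c g|(B(x_0,\varepsilon))$ and letting $\varepsilon\to 0$. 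Second, once $G=0$, the functional $I_p(\cdot,0;\cdot)$ is a functional on $BD(\Omega)\times\mathcal O(\Omega)$ satisfying all hypotheses of \cite[Theorem 2.3]{CFVG} (translation invariance and invariance under infinitesimal rigid motions being trivial here), so the Cantor density is read off directly from that theorem as the weak recession of the bulk density $H_p(\cdot,0)$.

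Your route is not wrong in spirit, but it amounts to reproving the Cantor part of \cite{CFVG} from scratch, and your sketch understates the difficulty. The assertion that ``the appropriate blow-up of $g$ around $x_0$ converges in $BD$ to an affine map whose symmetric gradient equals $A_0$'' is precisely the delicate point: unlike the bulk and jump cases treated in Proposition \ref{GMthmHSD}, at Cantor points there is no elementary affine approximation with controlled trace on $\partial Q(x_0,\varepsilon)$, and the analysis in \cite{CFVG} relies on the De Philippis--Rindler structure theorem for $\mathcal A$-free measures to handle this. Also, Proposition \ref{thm_propdens} gives $p$-Lipschitz continuity in $B$ and linear growth of $H_p(\cdot,0)$ in $A$, but this alone does \emph{not} yield a quantitative rate of the form \eqref{rateWinfty} for $H_p^\infty$; the paper's approach via \cite{CFVG} bypasses this because that global method does not require such a rate. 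If you wish to pursue your direct route, you must either invoke \cite{CFVG} at this stage anyway, or supply the missing structural blow-up argument for $E^c g$.
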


The results of this subsection are proved in Section \ref{relax}.


\subsection{Linearization of energies associated to a structured deformation}\label{sec: lin}

In the previous subsections we have developed a theory for structured deformations in linearized elasticity. As in pure elasticity, models of this kind can be understood as an approximation of nonlinear elasticity. In the spirit of rigorous linearization results in elasticity 
\cite{DalMasoNegriPercivale:02}  and fracture mechanics \cite{Friedrich:15-2, Friedrich}, we now discuss a setting which allows for a rigorous derivation of the energies  in Theorems \ref{representation} and \ref{representationbis} from a nonlinear bulk-surface energy. More precisely,   we  consider a nonlinear energy with  second-order contributions, both in the elastic and the surface part. Our goal is to simultaneously  pass to (a) infinitesimal strains and  to (b) a relaxed formulation in terms of structured deformations on $BD$.

We first introduce the relevant function space, following  the work on linearization for Griffith energies \cite{Friedrich}.  Similarly to the space $SBV^2(\Omega;\mathbb{R}^N)$,   we define the space
\begin{align*}
SBV^2_2(\Omega;\mathbb{R}^N) := \big\{ y \in SBV^2(\Omega; \mathbb{R}^N): \ \nabla y \in SBV^2(\Omega;\mathbb{R}^{N\times N})\big\}.
\end{align*}
The approximate differential and the jump set of $\nabla y$  is denoted by $\nabla^2 y$ and $J_{\nabla y}$, respectively. The jump height related to $\nabla y$ will be denoted by $[\nabla y]$.

 We let $V\colon \mathbb{R}^{N \times N} \to [0,+\infty)$ be a  single well, frame indifferent stored energy density.  More precisely,   for some $c>0$  we have 
\setlist[enumerate]{label=(V\arabic*)}
\begin{enumerate} 
\item  $V \text{ continuous, and $C^3$ in a neighborhood of $SO(N)$}$;  \label{assumptions-Wi}
\item  $\text{Frame indifference: } V(RZ) = V(Z) \text{ for all } Z \in \mathbb{R}^{N \times N}, R \in SO(N)$;  \label{assumptions-Wii} 
\item \label{assumptions-Wiii} $V(Z) \geq c{\rm dist}^2(Z,SO(N)) \ \text{ for all $Z \in \mathbb{R}^{N \times N}$}; \  V(Z) = 0 \text{ iff } Z \in SO(N)$.
\end{enumerate}
As explained in \cite[Remark 4.2]{Friedrich}, it would suffice to assume that $V$ lies in the H\"older space $C^{2,\alpha}$ for some exponent $\alpha \in (0,1]$, but we do not dwell on this point in the sequel.  

We let  $\psi\colon    \mathbb{R}^N \times \mathbb{S}^{N-1} \to [0,+\infty) $ be a continuous density satisfying {\rm \ref{psi_0}}--{\rm \ref{(psi4)}}, and additionally 
\setlist[enumerate]{label=($\psi$\arabic*)}
\begin{enumerate} 
\setcounter{enumi}{5}
\item\label{(psi5)} (frame indifference) for all   $\lambda \in \R{N}$, $\nu \in \mathbb{S}^{N-1}$,  and  $R \in SO(N)$ 
		$$\psi(R\lambda, \nu) = \psi( \lambda, \nu).$$
\end{enumerate}
Moreover, we let  $\Psi \colon    \mathbb{R}^{N\times N} \times \mathbb S^{N-1} \to [0,+\infty) $ be a continuous density satisfying {\rm \ref{psi_0}}--{\rm \ref{(psi1)}} and {\rm \ref{(psi5)}} with the only difference that $\lambda \in \mathbb{R}^N$ is replaced by $\lambda \in \mathbb{R}^{N\times N}$. 

Given the small-strain parameter $\delta>0$ and a parameter $\beta \in (\max\{\frac{2}{3},\frac{N-1}{N}\},1)$, we consider the nonlinear energy $ {F}_\delta \colon SBV^2_2(\Omega;\mathbb{R}^N) \to [0,+\infty)$, defined by
\begin{align}\label{F-en}
F_\delta(y)  = \int_\Omega \frac{1}{\delta^2} V(\nabla y) + \frac{1}{\delta^{2\beta}} |\nabla^2y|^2 \, {\rm d}x  + \frac{1}{\delta} \int_{J_y}  \psi \big( [y], \nu_y\big) \, {\rm d}\mathcal{H}^{N-1} +  \frac{ 1}{\delta^{\beta}} \int_{J_{\nabla y}}  \Psi \big( [\nabla y], \nu_{\nabla y}\big) \, {\rm d}\mathcal{H}^{N-1}    
\end{align}
for each $y\in SBV_2^2(\Omega;\mathbb{R}^N)$.   Due to the presence of the second and fourth term, we deal with a model   for \emph{nonsimple materials} going back to the ideas by {Toupin} \cite{Toupin:62,Toupin:64}: the main point is that the dependence of the elastic energy on the second gradient and the dependence of the surface energy on the first gradient  enhance    compactness and rigidity properties.     

Our idea is to perform a linearization about the identity in terms of the  \emph{rescaled displacement field}
$$u:=  \frac{y - {\rm id} }{\delta}, $$
and to pass to the limit $\delta\to 0$ in terms of this variable. Here, the parameter $\delta>0$ corresponds to the typical scaling of strains for configurations with finite energy.  Due to the frame indifference of the problem, see \ref{assumptions-Wii} and {\rm \ref{(psi5)}},  it is not a priori guaranteed that $y$ is close to the identity. Therefore, without restriction we consider  configurations $y$  which satisfy
\begin{align}\label{eq: rot back}
\fint_\Omega y(x) \,  {\rm d}x  =   \fint_\Omega x \,  {\rm d}x \quad \quad \text{and} \quad \quad     {\rm argmin}_{R \in SO(N)}   \Big|  \fint_\Omega \nabla y \, {\rm d}x - R \Big|  = {\rm Id}.
\end{align}
Indeed, this can always be achieved by replacing $y$ with $Qy +b$ for some $Q \in SO(N)$ and $b \in \mathbb{R}^N$, noting that, due to \ref{assumptions-Wii} and {\rm \ref{(psi5)}}, the energy is invariant under this transformation. 

We write  the energy in terms of the displacement under constraint \eqref{eq: rot back}. More precisely,  we define  $  {F}^{\rm dis}_{\delta}  \colon SBV^2_2(\Omega;\mathbb{R}^N) \to [0,+\infty]$ by
\begin{align}\label{def-energy}
{F}^{\rm dis}_{\delta} (u) = \begin{cases}  F_\delta({\rm id} + \delta u) & \text{if }  \text{${\rm id} + \delta u$ satisfies   \eqref{eq: rot back}},\\
+\infty & \text{else}. \end{cases}
\end{align}
Here and in the sequel, we follow the usual convention in the theory of $\Gamma$-convergence that convergence of the continuous parameter $\delta \to 0$ stands for convergence of arbitrary sequences $\lbrace \delta_n \rbrace$ with $\delta_n \to 0$ as $n \to \infty$, see e.g.\ \cite[Definition 1.45]{Braides:02}.

\begin{proposition}[Rigidity and compactness]\label{prop: rigidity}
Let $M>0$, $\frac{2}{3} < \gamma < \beta$, and $\{u_\delta\}  \subset SBV^2_2(\Omega;\mathbb{R}^N)$ with $ {F}_\delta  (u_\delta) \leq M$ for all $\delta>0$. Then, there exist sets of finite perimeter $\{S_\delta\}$ in $\Omega$ such that
\begin{align}\label{S-conv}
\lim_{\delta \to 0}  \mathcal{H}^{N-1}(\partial^* S_\delta  )  = 0 
\end{align}
and 
\begin{align}\label{gradientbound}
{\rm (i)}  & \ \  \Vert u_\delta \Vert_{L^1(  \Omega)} \leq C, \notag  \\
{\rm (ii)}  & \ \  \Vert \nabla u_\delta \Vert_{L^\infty(  \Omega \setminus S_\delta)} \leq C \delta^{\gamma-1},\notag\\
{\rm (iii)}  & \ \  \Vert \mathcal{E}  u_\delta \Vert_{L^2(  \Omega \setminus S_\delta)} \leq C .
  \end{align}
where the constant $C>0$ depends only on $M$ and $\Omega$. In particular, there exists a subsequence (not relabeled) and  $(g,G) \in StBD^2(\Omega)$  such that 
$u_\delta  \rightarrow g$ in $L^1(\Omega;{\mathbb{R}}^N)$ and  $\mathcal E u_\delta \, \chi_{\Omega \setminus S_\delta} \rightharpoonup G$ weakly in $L^2(\Omega; \mathbb R^{N\times N}_{\rm sym})$ as $\delta \to 0$. 
\end{proposition}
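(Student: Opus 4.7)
The plan is to follow the linearization strategy for Griffith-type energies developed in \cite{Friedrich:15-2,Friedrich}, adapting it to surface terms that genuinely depend on the jump height. I identify $F_\delta(u_\delta)$ with $F_\delta^{\rm dis}(u_\delta)$ and work with the associated deformation $y_\delta := {\rm id}+\delta u_\delta$, which satisfies \eqref{eq: rot back}. Using \ref{assumptions-Wiii} and the lower bounds in \ref{(psi1)} (applied to both $\psi$ and $\Psi$), the energy bound $F_\delta(y_\delta)\leq M$ translates into
\begin{align*}
\int_\Omega \mathrm{dist}^2(\nabla y_\delta,\SO{N})\,{\rm d}x &\leq C\delta^2, \qquad \int_\Omega |\nabla^2 y_\delta|^2\,{\rm d}x\leq C\delta^{2\beta},\\
\int_{J_{y_\delta}} |[y_\delta]|\,{\rm d}\mathcal{H}^{N-1} &\leq C\delta, \qquad \int_{J_{\nabla y_\delta}} |[\nabla y_\delta]|\,{\rm d}\mathcal{H}^{N-1}\leq C\delta^\beta.
\end{align*}

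First I would construct the exceptional set $S_\delta\subset\Omega$. Using the piecewise-rigidity machinery in \cite{Friedrich:15-2,Friedrich}, one absorbs both $J_{y_\delta}$ and $J_{\nabla y_\delta}$, together with a thin healing neighbourhood that exploits the $L^2$-bound on $\nabla^2 y_\delta$ to remove the small jumps, into a set of finite perimeter $S_\delta$ satisfying \eqref{S-conv}, in such a way that $y_\delta$ can be modified on $\Omega\setminus S_\delta$ to lie in $W^{2,2}(\Omega\setminus S_\delta;\R{N})$, with each connected component of $\Omega\setminus S_\delta$ Lipschitz and of uniformly controlled Poincaré constant. The hypothesis $\beta>\max\{2/3,(N-1)/N\}$ is precisely what makes this healing construction compatible with \eqref{S-conv}.

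Next, on the large components of $\Omega\setminus S_\delta$ I apply the Friesecke--James--Müller rigidity estimate \cite{FrieseckeJamesMueller:02} to obtain a rotation $R_\delta\in\SO{N}$ (common to all large components, otherwise pushing the remaining small ones into $S_\delta$) with $\Vert \nabla y_\delta -R_\delta\Vert_{L^2(\Omega\setminus S_\delta)}\leq C\delta$. Combining this with the $W^{2,2}$-bound $\Vert \nabla^2 y_\delta\Vert_{L^2}\leq C\delta^\beta$, a Gagliardo--Nirenberg interpolation followed by a Sobolev--Morrey embedding (made quantitative by the second-gradient control) upgrades this to $\Vert \nabla y_\delta - R_\delta\Vert_{L^\infty(\Omega\setminus S_\delta)}\leq C\delta^\gamma$ for any $\gamma<\beta$. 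The normalisation \eqref{eq: rot back} then forces $R_\delta = {\rm Id}+O(\delta)$; writing $\nabla u_\delta=(\nabla y_\delta-{\rm Id})/\delta$ and using ${\rm sym}(R_\delta-{\rm Id})=O(|R_\delta-{\rm Id}|^2)$ (which follows from $R_\delta\in\SO{N}$) yields (ii) and (iii).

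For (i) and the compactness statement, I combine (ii) with the surface estimate $\int_{J_{u_\delta}}|[u_\delta]|\,{\rm d}\mathcal{H}^{N-1}\leq C$ (obtained by dividing the first surface display above by $\delta$) and apply the Korn--Poincaré variant in $BD$ from Appendix \ref{sec: aux-app} to $u_\delta-Pu_\delta$, where $Pu_\delta$ vanishes thanks to \eqref{eq: rot back}. This gives $\Vert u_\delta\Vert_{L^1(\Omega)}\leq C$; since $\{u_\delta\}$ is then bounded in $BD(\Omega)$, a subsequence converges in $L^1$ to some $g\in BD(\Omega)$, while (iii) yields, up to a further subsequence, $\mathcal{E}u_\delta\chi_{\Omega\setminus S_\delta}\rightharpoonup G$ weakly in $L^2(\Omega;\R{N\times N}_{\rm sym})$, so that $(g,G)\in StBD^2(\Omega)$. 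The main technical obstacle is the construction of $S_\delta$ in the first step: healing the small but numerous jumps of $\nabla y_\delta$ at a vanishing perimeter cost is what genuinely requires both the second-order regularisation and the precise lower bound on $\beta$.
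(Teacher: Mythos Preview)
Your proposal differs from the paper's proof in both the construction of $S_\delta$ and the derivation of (i), and the latter contains a genuine gap.

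\textbf{Construction of $S_\delta$ and estimates (ii)--(iii).} The paper does not invoke FJM rigidity on components or Gagliardo--Nirenberg/Morrey interpolation. Instead it applies the $BV$ coarea formula to each entry $(\nabla y_\delta)_{ij}$: since $|D(\nabla y_\delta)|(\Omega)\leq C\delta^\beta$, slicing at level spacing $T_\delta=\delta^\gamma$ produces a Caccioppoli partition $\{P_l^\delta\}$ of $\Omega$ with $\sum_l\mathcal{H}^{N-1}(\partial^*P_l^\delta\cap\Omega)\leq C\delta^{\beta-\gamma}$ on whose pieces $\nabla y_\delta$ oscillates by at most $C\delta^\gamma$ in $L^\infty$. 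Setting $S_\delta=\Omega\setminus P_1^\delta$ (the complement of the largest piece) gives (ii) directly once one checks $|Z_1^\delta-{\rm Id}|\leq C\delta^\gamma$, and (iii) follows from the linearization formula \eqref{rig-eq: linearization}. This bypasses any need to absorb $J_{y_\delta}$ or $J_{\nabla y_\delta}$ into $S_\delta$, which matters here: since $\psi,\Psi$ satisfy only lower bounds of the form $c|\lambda|$ (not $c>0$), there is no bound on $\mathcal{H}^{N-1}(J_{y_\delta})$ or $\mathcal{H}^{N-1}(J_{\nabla y_\delta})$, so your healing construction cannot start from the jump sets themselves. Your interpolation route would also require uniform control on the geometry of the components of $\Omega\setminus S_\delta$, which the coarea argument avoids entirely.

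\textbf{The gap in (i).} The Korn--Poincar\'e variant of Proposition~\ref{prop: KP} needs $|Eu_\delta|(\Omega)$ bounded, hence $\|\mathcal{E}u_\delta\|_{L^1(\Omega)}\leq C$ on \emph{all} of $\Omega$. Your estimates (ii)--(iii) say nothing on $S_\delta$, where $\nabla y_\delta$ may be near a rotation far from ${\rm Id}$, so that $|\mathcal{E}u_\delta|\sim\delta^{-1}$ there; since $\mathcal{L}^N(S_\delta)$ decays only like $\delta^{\beta-\gamma}$, this contribution need not be bounded. The paper proves (i) \emph{before} constructing $S_\delta$, by a separate global argument: Poincar\'e for $\nabla y_\delta\in BV$ gives $\|\nabla y_\delta-Y^0_\delta\|_{L^{1^*}(\Omega)}\leq C\delta^\beta$ for some matrix $Y^0_\delta$; the energy bound and the normalization \eqref{eq: rot back} force $|Y^0_\delta-{\rm Id}|\leq C\delta^\beta$; and then the pointwise inequality $|{\rm sym}(Z-{\rm Id})|\leq{\rm dist}(Z,SO(N))+C\big(|Z-{\rm Id}|^{1^*}+{\rm dist}^2(Z,SO(N))\big)$ yields $\|\mathcal{E}u_\delta\|_{L^1(\Omega)}\leq C$. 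The assumption $\beta\geq\tfrac{N-1}{N}$ enters precisely here, so that $\delta^{\beta\cdot 1^*}\leq\delta$, and not in the construction of $S_\delta$ as you suggest.
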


\begin{definition}[Convergence]
We say that a sequence $\{ u_\delta\} $ \emph{converges almost in  $SD$} to $(g,G) \in StBD^2(\Omega)$, and write $u_\delta \rightsquigarrow (g,G)$ if there exist sets of finite perimeter $\{S_\delta\}$ satisfying \eqref{S-conv} such that $u_\delta  \rightarrow g$ in $L^1(\Omega;{\mathbb{R}}^N)$ and  $\mathcal E u_\delta \, \chi_{\Omega \setminus S_\delta} \rightharpoonup G$ weakly in $L^2(\Omega; \mathbb R^{N\times N}_{\rm sym})$.
\end{definition}

Next, we introduce the limiting energy. As an auxiliary step, for $O \in \mathcal O(\Omega)$ and $u \in SBD(\Omega)$, we define
\begin{align}\label{Elin}
F^{\rm dis}(u,O)\coloneqq \int_O W(\mathcal E u)\,\de x+\int_{O\cap J_u} \psi([u], \nu_u)\,\de\cH^{N-1},
\end{align}
where $W$ is given as  the linearization of the energy density $V$ at the identity, i.e.,   $W(Z) :=  \frac{1}{2}D^2V({\rm Id}) Z \colon Z$ for all  $Z \in \mathbb{R}^{N \times N}$. By \ref{assumptions-Wii}--\ref{assumptions-Wiii},  $Z\mapsto W(Z)$ depends only on ${\rm sym}(Z)$   and is a positively definite  quadratic form on $\mathbb{R}^{N \times N}_{\rm sym}$. In particular, $W$  satisfies  {\rm \ref{(W1)_p}}--{\rm \ref{W4}} for $p=2$. 

 Applying Theorem \ref{representation} and Remark \ref{remnox} we define the relaxation of $F^{\rm dis}$  as 
$$I_{\rm lin}(g,G) =  \int_\Omega H(\mathcal E g,G) \, \de x + 
		\int_{J_g}h([g], \nu_g) \, \de \mathcal H^{N-1}  +   \int_{\Omega} H^\infty\left(\frac{d E^c g}{d |E^c g|} ,0\right) d |E^c g| 
		$$
for $(g,G) \in StBD^2(\Omega)$, where the densities $H$ and $h$ are given in \eqref{906} and \eqref{907} for $p=2$, respectively,  and 	$H^\infty(\cdot,0)$ denotes the weak recession function of 	$H(\cdot,0)$.



 
 \color{black} 


\begin{theorem}[$\Gamma$-convergence]\label{th: gamma-conv}
The sequence $\{{F}^{\rm dis}_{\delta}\}$ $\Gamma$-converges (with respect to the convergence $\rightsquigarrow$) to the functional  $I_{\rm lin}$. More precisely, we have: 
\begin{itemize}
\item[(i)] (Ansatz-free lower bound) For each $(g,G)\in StBD^2(\Omega)$ and each sequence $\{u_\delta\}$ with $u_\delta \rightsquigarrow (g,G)$ as $\delta\to 0$, we have
$$\liminf_{\delta \to 0} {F}^{\rm dis}_{\delta} (u_\delta) \geq I_{\rm lin}(g,G); $$

\item[(ii)] (Recovery sequence)  For each $(g,G)\in StBD^2(\Omega)$ there exists a  sequence $\{ u_\delta\}$ with $u_\delta  \rightsquigarrow  (g,G)$ as $\delta\to 0$ and $$\lim_{\delta \to 0} {F}^{\rm dis}_{\delta} (u_\delta) = I_{\rm lin}(g,G). $$
\end{itemize}
\end{theorem}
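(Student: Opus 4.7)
I would prove the two halves of the $\Gamma$-convergence separately. In both cases the intermediate object is the linearized energy $F^{\rm dis}$ from \eqref{Elin} with $W=\tfrac12 D^2V({\rm Id})$: the rigidity and compactness provided by Proposition \ref{prop: rigidity} reduce the nonlinear energies $F_\delta^{\rm dis}$ to $F^{\rm dis}$, and the integral representation of its $\wSD{*}$-relaxation given by Theorem \ref{representationbis}, combined with Remark \ref{remnox} (so that the relaxed densities are $x$-independent, since $V$, $\psi$, $\Psi$ are), identifies the $\Gamma$-limit with $I_{\rm lin}$.

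\textbf{Lower bound.} Given $u_\delta\rightsquigarrow(g,G)$ with $F_\delta^{\rm dis}(u_\delta)\leq M$, Proposition \ref{prop: rigidity} yields (up to a subsequence) sets $S_\delta$ with $\mathcal{H}^{N-1}(\partial^* S_\delta)\to 0$ and the bounds $\|\nabla u_\delta\|_{L^\infty(\Omega\setminus S_\delta)}\leq C\delta^{\gamma-1}$, $\|\mathcal{E} u_\delta\|_{L^2(\Omega\setminus S_\delta)}\leq C$ for some $\gamma\in(2/3,\beta)$. On $\Omega\setminus S_\delta$, using \ref{assumptions-Wii} (so that the first variation of $V$ at ${\rm Id}$ vanishes on symmetric matrices and $2W$ is its Hessian) together with the pointwise smallness of $\delta\nabla u_\delta$, a Taylor expansion gives
\begin{equation*}
\frac{1}{\delta^2}\,V({\rm Id}+\delta\nabla u_\delta)\;\geq\;W(\mathcal{E} u_\delta)-C\,\delta^{3\gamma-2},
\end{equation*}
with uniformly vanishing error since $3\gamma-2>0$. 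The $1$-homogeneity \ref{(psi2)} of $\psi$ turns the nonlinear surface term exactly into $\psi([u_\delta],\nu_{u_\delta})$, and the $\Psi$- and second-gradient contributions, being nonnegative, can be dropped. I would then truncate $u_\delta$ inside $S_\delta$ (after local subtraction of an infinitesimal rigid motion whose $L^\infty$-trace on $\partial^* S_\delta$ is controlled by a Korn--Poincar\'e estimate on $\Omega\setminus S_\delta$, as in Appendix \ref{sec: aux-app}) to obtain $\tilde u_\delta\in SBD(\Omega)$ with $\tilde u_\delta\wSD{*}(g,G)$, $\mathcal E\tilde u_\delta=\mathcal E u_\delta\chi_{\Omega\setminus S_\delta}$, and $F^{\rm dis}(\tilde u_\delta)\leq F_\delta^{\rm dis}(u_\delta)+o(1)$. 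The definition \eqref{102} of $I_{\rm lin}=I_2$ as the $\wSD{*}$-relaxation of $F^{\rm dis}$ then closes the argument.

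\textbf{Recovery sequence.} Theorem \ref{representationbis} provides $\tilde u_n\in SBD(\Omega)$ with $\tilde u_n\wSD{*}(g,G)$ and $F^{\rm dis}(\tilde u_n)\to I_{\rm lin}(g,G)$. For each fixed $n$, I would build $y_\delta^{(n)}={\rm id}+\delta v_\delta^{(n)}$ with $v_\delta^{(n)}\in SBV^2_2(\Omega;\mathbb{R}^N)$ by mollifying $\tilde u_n$ outside a tubular neighborhood $T_\eta^n$ of $J_{\tilde u_n}$ of width $\eta=\eta_n(\delta)$, so that $|\nabla^2 v_\delta^{(n)}|=O(\eta^{-1})$ is concentrated in $T_\eta^n$ and $J_{\nabla y_\delta^{(n)}}$ is a slight thickening of $J_{\tilde u_n}$ on which $|[\nabla y_\delta^{(n)}]|=O(\delta)$. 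The $1$-homogeneity of $\psi$ reproduces the surface term identically, a Taylor expansion of $V$ gives $\int\delta^{-2}V({\rm Id}+\delta\nabla v_\delta^{(n)})\,dx\to\int W(\mathcal{E}\tilde u_n)\,dx$, and choosing $\eta\gg\delta^{\,2-2\beta}$ ensures
\begin{equation*}
\delta^{2-2\beta}\eta^{-1}\mathcal{H}^{N-1}(J_{\tilde u_n})+\delta^{1-\beta}\mathcal{H}^{N-1}(J_{\nabla y_\delta^{(n)}})\longrightarrow 0,
\end{equation*}
killing both the second-order and the gradient-jump penalties. A diagonal extraction $u_\delta:=v_\delta^{(n(\delta))}$ then yields $u_\delta\rightsquigarrow(g,G)$ with $\limsup_\delta F_\delta^{\rm dis}(u_\delta)\leq I_{\rm lin}(g,G)$.

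\textbf{Main obstacle.} The delicate step is the recovery construction, where three scales — the mollification width $\eta$, the strain scale $\delta$, and the exponent $\beta$ — must be balanced so that the nonsimple penalties are sub-leading while the elastic density stays close to $W(\mathcal{E}\tilde u_n)$. This interplay fixes precisely the admissible window $\beta\in(\max\{2/3,(N-1)/N\},1)$ imposed in \eqref{F-en}: the lower bound is what makes the pointwise rigidity estimate of Proposition \ref{prop: rigidity} available with $\gamma>2/3$ (needed to justify the Taylor expansion in the lower bound), while $\beta<1$ provides the slack needed to kill the second-order and $\Psi$-contributions in the recovery sequence. A further subtlety on the lower bound side is the construction of $\tilde u_\delta$ inside $S_\delta$: to keep the new jump across $\partial^* S_\delta$ of size $o(1)$, one needs the $BD$-Korn--Poincar\'e variant from Appendix \ref{sec: aux-app} to control the $L^\infty$-size of the trace of $u_\delta$ on $\partial^* S_\delta$ after subtracting a local infinitesimal rigid motion.
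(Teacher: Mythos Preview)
Your overall structure is right --- linearize via Proposition~\ref{prop: rigidity} and then identify the limit via the relaxation of $F^{\rm dis}$ --- but both halves have genuine gaps, and in each case the paper takes a different route that avoids the obstacle you describe.

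\textbf{Lower bound.} Your truncation step does not go through. Replacing $u_\delta$ on $S_\delta$ by an infinitesimal rigid motion creates a jump on $\partial^* S_\delta$ whose $\psi$-energy is $\int_{\partial^* S_\delta}|u_\delta-a_\delta|\,\de\mathcal{H}^{N-1}$. You claim $L^\infty$-trace control via the Korn--Poincar\'e inequality of Appendix~\ref{sec: aux-app}, but Proposition~\ref{prop: KP} only gives an $L^1$ estimate, and the $BD$ trace on $\partial^* S_\delta$ is merely bounded in $L^1$ by $\|u_\delta-a_\delta\|_{BD}$, which does \emph{not} vanish as $\mathcal{H}^{N-1}(\partial^* S_\delta)\to 0$. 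The paper avoids any modification of $u_\delta$: it passes to a subsequence with $\sum_\delta \mathcal{H}^{N-1}(\partial^* S_\delta)<\infty$, sets $T_\eta=\bigcup_{\delta\leq\eta}S_\delta$, smooths it to a closed $U_\eta$ with $\mathcal{H}^{N-1}(\partial U_\eta)\to 0$, applies Theorem~\ref{representationbis} directly on the open set $\Omega\setminus U_\eta$, and then lets $\eta\to 0$ using a $BD$-slicing lemma (Lemma~\ref{lem- eu}) to show $|Eg|(U_\eta)\to 0$. This is the missing idea on your side.

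\textbf{Recovery.} Your mollification sketch is too optimistic: the sequence $\tilde u_n$ lies only in $SBD$, so $\nabla\tilde u_n$ is not even defined (only $\mathcal{E}\tilde u_n$), the jump set $J_{\tilde u_n}$ is merely rectifiable and need not admit a tubular neighbourhood, and mollification at scale $\eta$ of an $SBD$ function does not yield the second-gradient bound $|\nabla^2 v_\delta^{(n)}|=O(\eta^{-1})$ you assert. The paper instead invokes Crismale's density result \cite[Theorem~1.1]{crismale}, which approximates each $\tilde u_n\in SBD^2$ by functions $v_{m,k}\in C^\infty(\overline{\Omega}\setminus J_{v_{m,k}})\cap W^{2,\infty}(\Omega\setminus J_{v_{m,k}})$ whose jump set is contained in finitely many $C^1$-hypersurfaces; in particular $J_{\nabla v_{m,k}}\subset J_{v_{m,k}}$ and $\|\nabla v_{m,k}\|_\infty+\|\nabla^2 v_{m,k}\|_\infty<\infty$. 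A diagonal choice with $\|\nabla v_{m_n,k_n}\|_\infty+\|\nabla^2 v_{m_n,k_n}\|_\infty\leq \delta_n^{-(1-\beta)/2}$ then makes both the second-gradient and $\Psi$-terms vanish (using $\beta<1$) and allows the Taylor expansion of $V$ (using $\beta>1/3$) --- no multi-scale mollification balancing is needed.
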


 The results announced in this subsection will be proved in Section \ref{proof:lin}.

\section{The global method in $StBD^p$ and $StSBD^p$}\label{gm}

 This section is devoted to the proof of Theorem \ref{GMthmHSDsbd}. It is based on several auxiliary results and follows the reasoning introduced in \cite[Theorem 3.7]{BFM}, developed in \cite[Theorem 4.6]{FHP}  (with the extension to second order gradients and  with the presence of an uncostrained field) and in \cite[Theorem 2.3]{CFVG} for the $BD$ setting. For this reason, we do not provide the arguments in full detail but point out only the main differences that arise in our setting.  On the other hand, for the reader's convenience, we introduce all the tools necessary for the Global Method in order to allow a direct comparison with other cases  in the  literature. We start with a simple observation.

 \begin{remark}\label{A1A2}
{\rm Due to hypotheses (H1) and (H4), given any $(g, G) \in StBD^p(\Omega)$ \color{black} and any open sets $O_1 \Subset O_2 \subseteq \Omega$, it follows that 
\begin{align*}
\mathcal F(g, G;O_2) \leq & \, \mathcal F(g, G;O_1)  +C\left(\mathcal L^N(O_2 \setminus O_1) + \|G\|_{L^p(O_2\setminus O_1)}^p+  | E g|(O_2 \setminus O_1)\right).
\end{align*}
Indeed, for $\varepsilon > 0$ small enough, let $O_{\eps} : = \left\{x \in O_1 : {\rm dist}(x, \partial O_1) > \eps\right\}$ and notice that $O_2$ is covered by the union of the two open sets $O_1$ and $O_2 \setminus \overline{O_{\eps}}.$ Thus, by (H1) and (H4) we have
\begin{align*}
\mathcal F(g, G;O_2) \leq & \, \mathcal F(g, G;O_1) + \mathcal F(g, G;O_2 \setminus \overline{O_{\eps}}) \\
\leq & \, \mathcal F(g, G;O_1)  +
C\left(\mathcal L^N(O_2 \setminus  \overline{O_{\eps}}) + \|G\|^p_{L^p(O_2\setminus \overline{O_{\eps}})} 
+ | E g|(O_2 \setminus\overline{O_{\eps}})\right).
\end{align*}
To conclude, it suffices to let $\eps \to 0^+$.
}
\end{remark}

 The following lemma is crucial for Theorem \ref{thm4.3FHP} below.  Recall the definition of $m$ in \eqref{mdef}.

\begin{lemma}\label{estHSDL}
 Assume that {\rm (H1)} and {\rm (H4)} hold in   $StBD^p(\Omega)$. For any $(u, U)\in StBD^p(\Omega)$   it follows that  
\begin{itemize}
	\item if $p>1$ and  $Q_\nu(x_0, r) \subset \Omega$,  we have 
	\begin{equation}\label{1m}
	\limsup_{\delta \to 0^+} m(u, U;Q_\nu(x_0,(1-\delta) r))
	\leq m(u, U;Q_\nu(x_0,r));
	\end{equation}
	

	\item if $p=1$ and  $O \in \mathcal O(\Omega)$,  we have 
	\begin{equation*}
	\limsup_{\delta \to 0^+} m(u, U;O_\delta)
	\leq m(u, U; O),
	\end{equation*}
	where $ O_\delta=\{x \in O: {\rm dist}(x, \partial O) > \delta\}$.
	\end{itemize}
\end{lemma}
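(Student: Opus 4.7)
The plan is to take a near-optimal competitor for the problem on the larger cube and adjust it minimally so as to be admissible for the smaller cube, without incurring any appreciable additional energy. Fix $\eta>0$ and pick $(v,V)\in\mathcal C_{StBD^p}(u,U;Q_\nu(x_0,r))$ with
$$\mathcal F(v,V;Q_\nu(x_0,r))\le m(u,U;Q_\nu(x_0,r))+\eta.$$
By definition of the competitor class, $v=u$ in some open annular neighbourhood $Q_\nu(x_0,r_1)\setminus\overline{Q_\nu(x_0,r_0)}$ of $\partial Q_\nu(x_0,r)$, with $0<r_0<r<r_1$; in particular, for every $\delta\in(0,1-r_0/r)$ this same annulus is a neighbourhood of $\partial Q_\nu(x_0,(1-\delta)r)$, so $v$ already fulfils the boundary condition on the smaller cube. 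The only remaining obstruction is that the zero-average constraint is posed on $Q_\nu(x_0,(1-\delta)r)$ rather than on $Q_\nu(x_0,r)$.

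To restore the constraint, I would fix a point $y_0\in Q_\nu(x_0,r_0)$ with $|Ev|(\{y_0\})=0$ (possible since $|Ev|$ is a finite Radon measure and hence has at most countably many atoms), and for each small $\delta$ let $B_\delta:=B(y_0,\rho_\delta)\Subset Q_\nu(x_0,r_0)$ with $\rho_\delta\downarrow 0$. Set
$$V_\delta := V + c_\delta\,\chi_{B_\delta},\qquad c_\delta := \frac{1}{|B_\delta|}\int_{Q_\nu(x_0,(1-\delta)r)}(U-V)\,\mathrm d x\;\in\;\mathbb R^{N\times N}_{\rm sym}.$$
By construction $(v,V_\delta)\in\mathcal C_{StBD^p}(u,U;Q_\nu(x_0,(1-\delta)r))$. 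Since $V_\delta=V$ outside $B_\delta$, locality (H3) together with the measure property (H1) and the upper bound in (H4) yield
$$\mathcal F(v,V_\delta;Q_\nu(x_0,(1-\delta)r))\le \mathcal F(v,V;Q_\nu(x_0,r)) + C\bigl(|B_\delta|+\|V_\delta\|_{L^p(B_\delta)}^p+|Ev|(B_\delta)\bigr).$$

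The main task, and the main obstacle, is to show that the error term above vanishes as $\delta\to 0^+$ by a careful balancing of $|B_\delta|$ and $|c_\delta|$. Using the zero-average constraint of $(v,V)$ on the larger cube one has $\int_{Q_\nu(x_0,(1-\delta)r)}(U-V)\,\mathrm dx = -\int_{A_\delta}(U-V)\,\mathrm d x$ with $A_\delta:=Q_\nu(x_0,r)\setminus Q_\nu(x_0,(1-\delta)r)$; H\"older's inequality bounds this by $\|U-V\|_{L^p}|A_\delta|^{1/p'}=O(\delta^{1/p'})$ for $p>1$, while for $p=1$ the same quantity equals $\|U-V\|_{L^1(A_\delta)}=o(1)$ by absolute continuity of the Lebesgue integral. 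Calling this infinitesimal $\varepsilon_\delta$, one then takes $|B_\delta|=\sqrt{\varepsilon_\delta}$: this forces simultaneously $|c_\delta|\le \sqrt{\varepsilon_\delta}\to 0$ and $|c_\delta|^p|B_\delta|\to 0$, while $|B_\delta|\to 0$ makes $\|V\|_{L^p(B_\delta)}^p\to 0$ by absolute continuity of the $L^p$-integral and $|Ev|(B_\delta)\to |Ev|(\{y_0\})=0$ by the atom-free choice of $y_0$. Hence the error term vanishes.

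Combining the two displays, passing to the $\limsup$ in $\delta$, and finally letting $\eta\downarrow 0$ gives (i). The proof of (ii) proceeds identically with $O_\delta$ in place of $Q_\nu(x_0,(1-\delta)r)$, the relevant inner and outer neighbourhoods of $\partial O_\delta$ being described via the distance function $\mathrm{dist}(\cdot,\partial O)$; for $p=1$ the H\"older step is replaced directly by absolute continuity of the $L^1$-integral on $O\setminus O_\delta$, so no additional work is needed.
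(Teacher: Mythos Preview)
Your proof is correct and takes a genuinely different route from the paper's. The paper modifies both $v$ and $V$ on a thin annulus $Q(1-\delta)\setminus Q(1-\alpha(\delta))$ adjacent to the boundary of the smaller cube, setting $\overline V$ equal to a constant there to restore the average; for $p>1$ the key technical point is the choice of $\alpha(\delta)\in(\delta,2\delta)$ so that the ratio $\mathcal L^N(Q\setminus Q(1-\alpha(\delta)))/\mathcal L^N(Q(1-\delta)\setminus Q(1-\alpha(\delta)))$ stays bounded (this is where the cube geometry enters), allowing the H\"older estimate of $\|\overline V\|_{L^p}^p$ on the annulus to close. You instead keep $v$ entirely unchanged and correct $V$ on a shrinking ball $B_\delta$ deep inside, trading the geometric constraint for the scale-balancing choice $|B_\delta|=\sqrt{\varepsilon_\delta}$. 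Your approach is cleaner and, notably, makes no use of the cube shape: it would give the statement for general open sets for every $p\ge 1$, whereas the paper's annulus argument needs cubes when $p>1$. One minor point: you invoke (H3), which is not among the lemma's stated hypotheses; but the paper's proof does the same implicitly (in passing from $\mathcal F(\overline v,\overline V;Q(1-\alpha(\delta)))$ to $\mathcal F(v,V;Q(1-\alpha(\delta)))$), so this is a harmless imprecision in the lemma's hypotheses rather than a gap in your argument.
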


\begin{proof}  
Suppose first that $p > 1$. Without loss of generality we can assume that $x_0=0$, $r=1$, $\nu=\bf e_1$ and $Q  := Q(1) = Q_{\bf e_1}(0, 1) \subset \Omega$.  For every $\varepsilon >0$ there exists $(v, V)\in \mathcal C_{StBD^p}( u, U; Q)$  such that 
\begin{equation}\label{uQ}
\mathcal F(v, V;Q)\leq m(u, U;Q)+ \varepsilon.
\end{equation}
Let $0 < \delta < 1$ be small enough so that $u = v$ in a neighborhood of $Q \setminus Q(1-2\delta)$, and let $\delta < \alpha(\delta) < 2\delta$ be such that 
$\displaystyle \lim_{\delta \to 0^+}\alpha(\delta) = 0$ and 
\begin{equation}\label{bound}
\dfrac{\mathcal L^N(Q \setminus Q(1-\alpha(\delta)))}{\mathcal L^N(Q(1-\delta) \setminus Q(1-\alpha(\delta)))} \leq C,
\end{equation}
where the constant $C$ depends only on the space dimension $N$ and is, therefore, independent of $\delta$. Define
$$
\overline V= \begin{cases}
V, &\hbox{ in }Q(1-\alpha(\delta))\\
\dfrac{1}{\mathcal L^N(Q(1-\delta) \setminus Q(1-\alpha(\delta)))}\Big(\displaystyle \int_{Q(1-\delta)}U \, \de x- \int_{Q(1-\alpha(\delta))}V \, \de x\Big), &\hbox{ in }Q(1-\delta)\setminus Q(1-\alpha(\delta))\\*[5mm]
U, &\hbox{ in } \Omega\setminus Q(1-\delta)	
\end{cases}
$$
and
$$
\overline v= \begin{cases} 
v, &\hbox{ in } Q(1-\alpha(\delta)) \\
u, &\hbox{ in } \Omega\setminus Q(1-\alpha(\delta)).
\end{cases}
$$
It is easily verified that $(\overline v, \overline V) \in  \mathcal C_{StBD^p}( u, U;  Q(1-\delta))$, see e.g. \cite[Proposition 2.5]{BFT}. Thus, by Remark \ref{A1A2}, by (H1), by $ \mathcal F(v, V; Q(1-\alpha(\delta))) \leq \mathcal F(v, V;Q)$, and    \eqref{uQ} we have
\begin{align}\label{uQ1-delta}
m(u, U;Q(1-\delta))&\leq 
\mathcal F(\overline v, \overline V;Q(1-\delta))\nonumber\\
& \leq \mathcal F(v, V; Q(1-\alpha(\delta))) 
+ C\Big[\mathcal L^N(Q_*^\delta )   + | E \overline v|(Q_*^\delta )
 + \|\overline V\|^p_{L^p(Q_*^\delta )}\Big]\nonumber\\
&\leq   m(u, U;Q)+ \varepsilon + 
C\Big[\mathcal L^N(Q_*^\delta )  + | E \overline v|(Q_*^\delta ) 
 + \|\overline V\|^p_{L^p(Q_*^\delta )}\Big],
\end{align}
 where we have set  $Q_*^\delta := Q(1-\delta) \setminus Q(1-\alpha(\delta))$ for convenience. 
Clearly, $\displaystyle \lim_{\delta \to 0^+}\mathcal L^N(Q_*^\delta ) = 0$ and, since $u = v$ on $\partial Q(1-\alpha(\delta))$, it also follows that 
\begin{align}\label{pnormxxx}
\lim_{\delta \to 0^+}|E \overline v|(Q_*^\delta ) =0.
\end{align}
On the other hand,   we have
\begin{align}\label{pnorm}
&\|\overline V\|^p_{L^p(Q_*^\delta)}  =\frac{1}{(\mathcal L^N(Q_*^\delta))^{p-1}} \left|\int_{Q(1-\delta)} U \, \de x - \int_{Q(1-\alpha(\delta))} V \, \de x\right|^p \nonumber\\
& \hspace{2cm}= \frac{1}{(\mathcal L^N(Q_*^\delta))^{p-1}} \left|\int_{Q(1-\alpha(\delta))} (U - V) \, \de x + \int_{Q_*^\delta} U \, \de x\right|^p \nonumber \\
& \hspace{2cm}\leq \frac{C}{(\mathcal L^N(Q_*^\delta))^{p-1}} \left(\left|\int_{Q(1-\alpha(\delta))} (U - V) \, \de x\right|^p + \left|\int_{Q_*^\delta} U \, \de x\right|^p\right).
\end{align}
Recalling that $\displaystyle \int_Q (U - V) \, \de x = 0$, 
the first term on the right-hand side of \eqref{pnorm} can be estimated by using H\"older's inequality, yielding
\begin{align*}
\frac{C}{(\mathcal L^N(Q_*^\delta))^{p-1}} \left|\int_{Q(1-\alpha(\delta))} (U - V) \, \de x\right|^p  
& = \frac{C}{(\mathcal L^N(Q_*^\delta))^{p-1}} \left|\int_{Q\setminus Q(1-\alpha(\delta))} (U - V) \, \de x\right|^p \nonumber \\
& \leq \frac{C}{(\mathcal L^N(Q_*^\delta))^{p-1}}
\, \|U - V\|^p_{L^p(Q \setminus Q(1-\alpha(\delta)))}
(\mathcal L^N(Q \setminus Q(1-\alpha(\delta))))^{p-1}.	
\end{align*}	
By \eqref{bound} and the fact that $\displaystyle \lim_{\delta \to 0^+}\mathcal L^N(Q\setminus Q(1-\alpha(\delta))) = 0$  we conclude that the first term on the right-hand side of \eqref{pnorm}  converges to $0$ as $\delta \to 0$.  Regarding the second term, a similar argument using H\"older's inequality leads to
\begin{align*}
& \limsup_{\delta \to 0^+} 
\frac{C}{(\mathcal L^N(Q_*^\delta))^{p-1}}
\left|\int_{Q_*^\delta} U \, \de x\right|^p  \leq \lim_{\delta \to 0^+}C\|U\|^p_{L^p(Q_*^\delta)} = 0.
\end{align*}
Therefore, combining our findings for \eqref{pnorm} with \eqref{pnormxxx}, from \eqref{uQ1-delta} we obtain
$$\limsup_{\delta\to 0^+}m(u, U;Q(1-\delta))\leq 
m(u, U;Q)+ \varepsilon.$$
It suffices to let $\varepsilon \to 0^+$ to complete the proof in the case 
$p > 1$.

When $p = 1$, the proof is similar and we omit the details. In this case, the estimate of the last term in \eqref{uQ1-delta} is simpler and does not require the use of H\"older's inequality. Also, in this case, more general sets other than cubes may be considered as there is no need to use inequality \eqref{bound} (see also \cite{FHP}).
\end{proof}

For the reader's convenience, with the sole aim of allowing a  comparison  with the theory introduced in \color{black} \cite{BFM, BFLM}, we define
$$\mathcal O^\star(\Omega):= \{Q_\nu (x, \varepsilon) \colon \, x \in \Omega, \nu \in \mathbb S^{N-1}, \varepsilon > 0\}.$$
For    $(u,U) \in StBD^p(\Omega)$  fixed, we set $\mu := \mathcal L^N\lfloor \Omega + | E^s u|$, 
and, given also $O \in \mathcal O(\Omega)$ and $\delta > 0$, we let
\begin{align*}
m^\delta(u,U;O) := \inf \Big\{\sum_{i=1}^\infty m(u, U; Q_i): Q_i \in \mathcal O^\star(\Omega), Q_i \subseteq O, Q_i \cap Q_j= \emptyset \; {\rm if } \; i \neq j, \notag  \\
 \quad \quad 	 {\rm diam}(Q_i) < \delta, \  \mu\Big(O \setminus \bigcup_{i=1}^\infty Q_i\Big)=0\Big\}.
\end{align*}
Since $\delta \mapsto m^\delta(u,U;O)$ is a decreasing function, we can define 
\begin{align*}
m^\star (u, U;O):= \sup_{\delta >0} m^\delta (u, U;O)= \lim_{\delta \to 0^+} m^\delta(u, U;O).
\end{align*}
Adapting the reasoning in \cite[Lemma 4.2 and Theorem 4.3]{FHP}  (where an extra unconstrained field $U$ is considered) and exploiting  a Poincar\'e's-type inequality in $BD$  (see e.g.\ \cite[Theorem 2.7]{BFT}) \color{black} 
we obtain the two results below. 

\begin{lemma}
	\label{Lemma 4.2FHP} Let $p \geq 1$ and
	assume that {\rm (H1)}--{\rm (H4)} hold. Then, 
	for all $(u, U)\in StBD^p(\Omega)$  and all 
	$O \in \mathcal O(\Omega)$,
	we have
	$$
	\mathcal F(u, U;O) = m^\star(u, U; O).
	$$
\end{lemma}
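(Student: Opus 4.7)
The plan is to establish the two inequalities $\mathcal F(u,U;O)\geq m^\star(u,U;O)$ and $\mathcal F(u,U;O)\leq m^\star(u,U;O)$ separately, following the standard blueprint of Bouchitté--Fonseca--Mascarenhas \cite{BFM} adapted to the $BD$ setting, and relying crucially on Lemma \ref{estHSDL} together with the Poincaré-type inequality in $BD$.

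\textbf{The easy direction} ($m^\star\leq \mathcal F$). Fix $\delta>0$ and any admissible family $\{Q_i\}\subset\mathcal O^\star(\Omega)$ partitioning $O$ up to $\mu$-null sets with $\mathrm{diam}(Q_i)<\delta$. The pair $(u,U)$ itself belongs to $\mathcal C_{StBD^p}(u,U;Q_i)$ (both the boundary match and the zero-mean constraint are trivially satisfied), hence $m(u,U;Q_i)\leq \mathcal F(u,U;Q_i)$. Summing in $i$ and using the countable additivity from (H1) together with $\mu(O\setminus\bigcup_i Q_i)=0$ and (H4) to control the exceptional boundary, one obtains $\sum_i m(u,U;Q_i)\leq \mathcal F(u,U;O)$. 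Taking the infimum over admissible families yields $m^\delta(u,U;O)\leq \mathcal F(u,U;O)$, and letting $\delta\to 0^+$ gives the inequality.

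\textbf{The hard direction} ($\mathcal F\leq m^\star$). Fix $\eta>0$ and $\delta>0$, and select an admissible partition $\{Q_i\}$ with diameters less than $\delta$ such that
\[
\sum_{i} m(u,U;Q_i)\leq m^\delta(u,U;O)+\eta.
\]
For each $Q_i$ pick an almost-optimal competitor $(v_i,V_i)\in\mathcal C_{StBD^p}(u,U;Q_i)$ with $\mathcal F(v_i,V_i;Q_i)\leq m(u,U;Q_i)+\eta\,2^{-i}$; by Lemma \ref{estHSDL} we may slightly shrink each $Q_i$ so that the competitors can be taken to equal $u$ in a genuine neighborhood of $\partial Q_i$ without inflating the energy. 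Define the glued pair $v_\delta:=\sum_i v_i\chi_{Q_i}$ and $V_\delta:=\sum_i V_i\chi_{Q_i}$; the boundary matching guarantees $v_\delta\in BD(\Omega)$ with no additional jump along $\bigcup_i\partial Q_i$, and by construction $v_\delta=u$ and $V_\delta=U$ outside $\bigcup_i Q_i$. Applying a Poincaré-type inequality in $BD$ on each $Q_i$, together with the diameter bound $\mathrm{diam}(Q_i)<\delta$ and the uniform energy bound from (H4), one shows that $v_\delta\to u$ in $L^1(\Omega;\mathbb R^N)$ as $\delta\to 0^+$ (modulo projection onto rigid motions, which the boundary match absorbs). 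Meanwhile, the mean-value condition $\int_{Q_i}V_i=\int_{Q_i}U$ combined with the energy bound implies $V_\delta\rightharpoonup U$ in $L^p$ (respectively weak$^*$ in $\mathcal M$ when $p=1$), by testing against piecewise constant functions on the partition and using the diameter bound to approximate arbitrary test functions.

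\textbf{Conclusion}. Using the lower semicontinuity (H2) together with locality (H3) and Radon additivity (H1),
\[
\mathcal F(u,U;O)\leq \liminf_{\delta\to 0^+}\mathcal F(v_\delta,V_\delta;O)=\liminf_{\delta\to 0^+}\sum_i\mathcal F(v_i,V_i;Q_i)\leq \liminf_{\delta\to 0^+}\Big(\sum_i m(u,U;Q_i)+\eta\Big)\leq m^\star(u,U;O)+2\eta,
\]
and letting $\eta\to 0^+$ finishes the argument. The main obstacle is the convergence of the glued sequence: controlling $\|v_\delta-u\|_{L^1}$ requires a Korn--Poincaré estimate in $BD$ applied cube-by-cube and summed with care, and the weak (or weak$^*$) convergence of $V_\delta$ must be propagated from the zero-mean constraint on each small cube; both steps rely essentially on the uniform energy bound provided by (H4) and the summability inherited from $\sum_i m(u,U;Q_i)<\infty$. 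The case $p=1$ requires particular attention because of the possible concentration of $U$ in measure, and here the more flexible form of Lemma \ref{estHSDL} for general $O\in\mathcal O(\Omega)$ is needed to adjust the admissible covers.
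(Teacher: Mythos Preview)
Your approach is essentially correct and coincides with the strategy the paper indicates (by citing \cite[Lemma 4.2]{FHP} and the $BD$ Poincar\'e inequality of \cite[Theorem 2.7]{BFT}): one direction is immediate from $(u,U)\in\mathcal C_{StBD^p}(u,U;Q_i)$, and the other glues near-optimal competitors, controls $\|v_\delta-u\|_{L^1}$ via a scaled Poincar\'e inequality in $BD$ on each cube (the boundary match kills the infinitesimal rigid motion), and identifies the weak/weak$^*$ limit of $V_\delta$ through the zero-mean constraint.

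One minor correction: your two appeals to Lemma~\ref{estHSDL} are misplaced. The competitors $(v_i,V_i)\in\mathcal C_{StBD^p}(u,U;Q_i)$ already satisfy $v_i=u$ in a neighborhood of $\partial Q_i$ by the very definition of the class, so no shrinking via Lemma~\ref{estHSDL} is needed to secure the gluing. Likewise, that lemma plays no special role in the $p=1$ case of the present argument; the weak$^*$ convergence $V_\delta\overset{*}{\rightharpoonup}U$ in $\mathcal M$ follows directly from the uniform $L^1$ bound (via (H4)) and the zero-mean condition tested against continuous $\phi$. Lemma~\ref{estHSDL} is instead the key input for Theorem~\ref{thm4.3FHP}, where one compares $m$ at different radii.
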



\begin{theorem}\label{thm4.3FHP}
	Let $p\geq 1$ and assume that hypotheses {\rm (H1)}, {\rm (H2)}, and {\rm (H4)} hold. Then, for every $\nu \in \mathbb S^{N-1}$ and for every  $(u,U) \in StBD^p(\Omega)$, 
	we have 
	$$
	\lim_{\varepsilon \to 0^+}\frac{{\mathcal F}(u, U; Q_\nu(x_0, \varepsilon))}{\mu( Q_\nu(x_0,\varepsilon))}= \lim_{\varepsilon \to 0^+} \frac{m (u, U; Q_\nu(x_0, \varepsilon))}{\mu (Q_\nu(x_0,\varepsilon))}
	$$
	for $\mu$-a.e.\ $x_0 \in \Omega$, where $\mu:=\mathcal L^N\lfloor \Omega + |E^s u|.$
	\end{theorem}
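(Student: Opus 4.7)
The first, easy, inequality
\[
\limsup_{\varepsilon\to 0^+}\frac{m(u,U;Q_\nu(x_0,\varepsilon))}{\mu(Q_\nu(x_0,\varepsilon))}\le \limsup_{\varepsilon\to 0^+}\frac{\mathcal F(u,U;Q_\nu(x_0,\varepsilon))}{\mu(Q_\nu(x_0,\varepsilon))}
\]
is immediate from $m(u,U;O)\le\mathcal F(u,U;O)$, which holds because $(u,U)$ itself belongs to $\mathcal C_{StBD^p}(u,U;O)$. Assumptions (H1) and (H4) imply that $\mathcal F(u,U;\cdot)$ is a positive finite Radon measure on $\Omega$ that is absolutely continuous with respect to $\mu=\mathcal L^N\lfloor\Omega+|E^s u|$, since by (H4) it is dominated by $C\bigl(\mathcal L^N+|Eu|+\|U\|_{L^p}^p\mathcal L^N\bigr)\ll\mu$. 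Writing $\alpha:=\frac{d\mathcal F(u,U;\cdot)}{d\mu}$ and applying the Besicovitch differentiation theorem to the basis of cubes with fixed orientation $\nu$, I obtain
\[
\lim_{\varepsilon\to 0^+}\frac{\mathcal F(u,U;Q_\nu(x_0,\varepsilon))}{\mu(Q_\nu(x_0,\varepsilon))}=\alpha(x_0) \quad \text{for $\mu$-a.e.\ } x_0\in\Omega.
\]

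The heart of the proof is to show $\liminf_{\varepsilon\to 0^+} m(u,U;Q_\nu(x_0,\varepsilon))/\mu(Q_\nu(x_0,\varepsilon))\ge \alpha(x_0)$ at $\mu$-a.e.\ $x_0$. I would argue by contradiction: if it failed, there would exist $\beta>0$ with
\[
\mu(A_\beta)>0,\qquad A_\beta:=\Bigl\{x_0\in\Omega:\liminf_{\varepsilon\to 0^+}\frac{m(u,U;Q_\nu(x_0,\varepsilon))}{\mu(Q_\nu(x_0,\varepsilon))}<\alpha(x_0)-\beta\Bigr\}.
\]
Fix an arbitrary open set $O\supset A_\beta$ and $\delta>0$. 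For each $x_0\in A_\beta$, the definitions of $A_\beta$ and of $\alpha$ jointly produce arbitrarily small $\varepsilon>0$ with $Q_\nu(x_0,\varepsilon)\subset O$, $\mathrm{diam}\,Q_\nu(x_0,\varepsilon)<\delta$, and
\[
m(u,U;Q_\nu(x_0,\varepsilon))<\mathcal F(u,U;Q_\nu(x_0,\varepsilon))-\tfrac{\beta}{2}\mu(Q_\nu(x_0,\varepsilon)).
\]
These cubes form a fine cover of $A_\beta$ to which Besicovitch's covering theorem applies, yielding a countable disjoint subfamily $\{Q_\nu(x_i,\varepsilon_i)\}_i\subset O$ that covers $A_\beta$ $\mu$-almost everywhere; a further Besicovitch extraction provides disjoint cubes $\{R_k\}\subset O$ of diameter $<\delta$ covering $\mu$-almost the remainder of $O$. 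Viewing the combined family as an admissible partition in the definition of $m^\delta(u,U;O)$, using $m\le\mathcal F$ on the $R_k$, the absolute continuity of $\mathcal F$ with respect to $\mu$ (which guarantees additivity of $\mathcal F$ over $\mu$-a.e.\ partitions), and the displayed bound on each $Q_\nu(x_i,\varepsilon_i)$, I arrive at
\[
m^\delta(u,U;O)\le \mathcal F(u,U;O)-\tfrac{\beta}{2}\sum_i\mu(Q_\nu(x_i,\varepsilon_i))\le \mathcal F(u,U;O)-\tfrac{\beta}{2}\mu(A_\beta).
\]
Sending $\delta\to 0^+$ and invoking Lemma \ref{Lemma 4.2FHP} to replace $m^\star(u,U;O)$ by $\mathcal F(u,U;O)$ on the left forces $\mu(A_\beta)=0$, the desired contradiction. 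Combining the two one-sided bounds yields the claim.

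The main obstacle is precisely this bridging step: the set function $m(u,U;\cdot)$ is not a measure, so one cannot differentiate it directly, and one must replace the naive differentiation argument by a Vitali-type construction. The crucial leverage is the interplay between the identity $\mathcal F=m^\star$ of Lemma \ref{Lemma 4.2FHP} and the absolute continuity $\mathcal F\ll\mu$ supplied by (H4); without the identity, there would be no target to reach when passing $\delta\to 0^+$, and without the absolute continuity, one could not freely recombine the energies of the partition pieces into the total energy $\mathcal F(u,U;O)$. A minor technical point is that Besicovitch's covering theorem is applied to axis-aligned cubes of the prescribed orientation $\nu$ rather than to balls, but this is standard since such cubes form a differentiation basis for every positive Radon measure on $\mathbb R^N$.
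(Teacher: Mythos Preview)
Your proposal is correct and follows essentially the same route as the paper, which omits the argument and refers to \cite[Theorem 4.3]{FHP}: the trivial bound $m\le\mathcal F$ for one direction, and for the other a Vitali--Besicovitch fine-covering contradiction that feeds the resulting partition into the definition of $m^\delta$ and then invokes the identity $\mathcal F=m^\star$ of Lemma \ref{Lemma 4.2FHP}. The only point the paper flags explicitly---that for $p>1$ the upper-semicontinuity estimate of Lemma \ref{estHSDL} is available only along cubes---is already absorbed in your argument since both the fine cover and the definition of $m^\delta$ use cubes in $\mathcal O^\star(\Omega)$; that lemma enters through Lemma \ref{Lemma 4.2FHP} rather than directly here.
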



  The proof of this  result is not presented since it follows along the lines of \cite[Theorem 4.3]{FHP},   the only difference being the limsup estimate \eqref{1m} (for $p>1$) obtained along cubes and not generic open sets. We now present the proof of the  main result of this section.

\begin{proof}[Proof of Proposition \ref{GMthmHSD}]
\emph{Step 1.}	In this step we prove  that, for $\mathcal L^N$-a.e.\ $x_0 \in \Omega$,
	\begin{align}\label{fproof}
	\frac{{\rm d} \mathcal F(u, U;\cdot)}{{\rm d} \mathcal L^N} (x_0)=	
	f\big(x_0, u(x_0),   \nabla   u(x_0), U(x_0)\big).
	\end{align}
Let $x_0$ be a fixed point in $\Omega$ satisfying the following properties 
\begin{align}
&\lim_{\varepsilon \to 0^+}\frac{1}{\varepsilon}\ave_{Q(x_0,\varepsilon)}|u(x) - u(x_0) - \nabla u(x_0)(x-x_0)| \, \de x = 0;\label{aproxdif} \\
& \lim_{\varepsilon \to 0^+}\frac{1}{\varepsilon^N}|Eu|(Q(x_0,\varepsilon)) = |\mathcal E u(x_0)|, \quad \lim_{\varepsilon \to 0^+}\frac{1}{\varepsilon^N}|E ^su|(Q(x_0,\varepsilon)) = 0;\label{DuDsu}\\
& \lim_{\varepsilon \to 0^+}\ave_{Q(x_0,\varepsilon)}|U(x) - U(x_0)| \, \de x = 0; \label{ULeb}\\
& \frac{{\rm d} \mathcal F(u, U;\cdot)}{{\rm d} \mathcal L^N} (x_0) =
\lim_{\varepsilon \to 0^+}\frac{\mathcal F(u, U;Q(x_0,\varepsilon))}{\varepsilon^N} =
\lim_{\varepsilon \to 0^+}\frac{m(u, U;Q(x_0,\varepsilon))}{\varepsilon^N}.\label{Fmu}
\end{align}
  It is well known that the above properties hold for $\mathcal L^N$-a.e.\ $x_0$ in $\Omega$, taking also in consideration Theorem~\ref{thm4.3FHP} for \eqref{Fmu}, 
  as well as Theorem \ref{approxsymdiff} which provides an $N\times N$ matrix $\nabla u(x_0)$ such that \eqref{aproxdif} is valid.

Having fixed $x_0$ as above, let $\varepsilon >0$ be small enough so that $Q(x_0,\varepsilon)\subset \Omega$. Given the definition of the density $f$ in \eqref{f}, due to \eqref{Fmu},  we want to show that
\begin{align}\label{toestimate}
	\lim_{\varepsilon \to 0^+}
	\frac{m(\ell_{x_0},U(x_0);Q (x_0, \varepsilon))}
	{\mathcal L^N (Q (x_0, \varepsilon))}  = \lim_{\varepsilon \to 0^+}\frac{m(u,U; Q (x_0, \varepsilon))}{\mathcal L^N (Q (x_0, \varepsilon))},
\end{align}
 where  for simplicity, we write $\ell_{x_0} := \ell_{x_0,u(x_0), \nabla   u(x_0)} $ $ = u(x_0)+   \nabla   u(x_0)(\cdot-x_0)$.  
 
 Let  $\delta \in (0,1)$   and  let  $(\widetilde u, \widetilde U) \in  C_{StBD^p}(\ell_{x_0}, U(x_0); Q(x_0, \delta \varepsilon))$ \color{black}  be such that
\begin{equation}\label{infmva}
\varepsilon^{N+1} + m(\ell_{x_0}, U(x_0); Q(x_0, \delta \varepsilon)) \geq \mathcal F(\widetilde u, \widetilde U; Q(x_0, \delta \varepsilon)).
\end{equation}
Then, as $\widetilde u = \ell_{x_0}$ on $\partial Q(x_0, \delta \varepsilon)$, we have
\begin{align}\label{trvaest}
|{\rm tr}\, u-{\rm tr}\,\widetilde u|(\partial Q(x_0,\delta \varepsilon)):= \int_{\partial Q(x_0,\delta\varepsilon)} |\widetilde u- u| \,\de \mathcal H^{N-1}=\int_{\partial Q(x_0,\delta\varepsilon)} |\ell_{x_0}- u|\, \de \mathcal H^{N-1}.
\end{align}
 We define
$$
\widetilde v_\varepsilon:=\left\{
\begin{array}{ll}
\widetilde u, &\hbox{ in }Q(x_0,\delta \varepsilon),\\
u, &\hbox{ in }\Omega\setminus Q(x_0,\delta \varepsilon),
\end{array}
\right.
$$
and let
$$\widetilde V_\varepsilon(x):= \left\{
\begin{array}{ll}
\widetilde U(x), & \hbox{ in }Q(x_0,\delta \varepsilon),\\
\displaystyle \frac{1}{\mathcal L^N(Q(x_0,\varepsilon)\setminus Q(x_0,\delta \varepsilon))}\left[\int_{Q(x_0,\varepsilon)} U(x)\, \de x- \int_{Q(x_0,\delta \varepsilon)} U(x_0)\,\de x\right], &\hbox{ in }\Omega\setminus Q(x_0,\delta \varepsilon).
\end{array}\right.
$$
Recall that  $\displaystyle \int_{Q(x_0,\delta \varepsilon)} \widetilde U(x) \,\de x = \int_{Q(x_0,\delta \varepsilon)} U(x_0) \,\de x = U(x_0)(\delta \varepsilon)^N$, so we get
 $(\widetilde v_\varepsilon, \widetilde V_{\varepsilon})\in \mathcal  C_{StBD^p}(u, U; Q(x_0,\varepsilon))$ (see \cite[Chapter II, Proposition 2.1]{T}).  Hence, 
 by Remark \ref{A1A2}, 
 we have
\begin{align}\label{mest}
	m(u,U;Q(x_0,\varepsilon))&\leq \mathcal F(\widetilde v_\varepsilon,\widetilde V_\varepsilon;Q(x_0,\varepsilon))\nonumber\\
	&\leq \mathcal F(\widetilde u, \widetilde U;Q(x_0,\delta\varepsilon)) + C\Big(\mathcal L^N(Q(x_0,\varepsilon) \setminus Q(x_0,\delta\varepsilon))\nonumber \\ 
	& \hspace{2cm} + \int_{Q(x_0,\varepsilon)\setminus Q(x_0,\delta \varepsilon)} |\widetilde V_\varepsilon  |^p \,\de x 
	 + |E \widetilde v_\varepsilon|(Q(x_0,\varepsilon)\setminus Q(x_0,\delta \varepsilon))\Big)\nonumber \\
	&\leq \varepsilon^{N+1}+ m(\ell_{x_0}, U(x_0); Q(x_0,\delta\varepsilon)) + C\Big( \varepsilon^N (1-\delta^N) +  \int_{Q(x_0,\varepsilon)\setminus Q(x_0,\delta \varepsilon)} |\widetilde V_\varepsilon  |^p \,\de x\Big) \notag\\
	& \hspace{2cm} + C\Big(  |E u|(Q(x_0,\varepsilon)\setminus \overline{Q(x_0,\delta \varepsilon)})+ |{\rm tr}\,\widetilde u-{\rm tr}\, u|(\partial Q(x_0,\delta\varepsilon)) \Big).
\end{align}
We observe that we have
\begin{align}\label{mest2}
& \int_{Q(x_0,\varepsilon)\setminus Q(x_0,\delta \varepsilon)} |\widetilde V_\varepsilon  |^p \,\de x \leq 
\frac{1}{\varepsilon ^{N(p-1)}(1-\delta^N)^{p-1}}\left|\int_{Q(x_0,\varepsilon)} U(x) \,\de x - \int_{Q(x_0,\delta \varepsilon)} U(x_0) \,\de x\right|^p  \nonumber\\
&\hspace{0,3cm}\leq\frac{C}{\varepsilon ^{N(p-1)}(1-\delta^N)^{p-1}}\left( \left|\int_{Q(x_0,\varepsilon)\setminus Q(x_0,\delta \varepsilon)} U(x) \, \de x\right|^p+ \left|\int_{Q(x_0,\delta\varepsilon)}(U(x)-U(x_0)) \,\de x\right|^p\right) \nonumber \\
&\hspace{0,3cm}\leq\frac{C \varepsilon^{Np}}{\varepsilon ^{N(p-1)}(1-\delta^N)^{p-1}}\left( \left|\ave_{Q(x_0,\varepsilon)}\hspace{-0,1cm} U \,\de x- \delta^N \ave_{Q(x_0,\delta\varepsilon)}\hspace{-0,1cm}U \,\de x \right|^p+ \left|\delta^N\ave_{Q(x_0,\delta\varepsilon)}\hspace{-0,1cm}(U -U(x_0))\,\de x\right|^p\right). 
\end{align}
 Taking  into account \eqref{mest}, \eqref{mest2}, we have
\begin{align}
& \lim_{\varepsilon \to 0^+}\frac{m(u,U; Q (x_0, \varepsilon))}{\mathcal L^N (Q (x_0, \varepsilon))} - \lim_{\varepsilon \to 0^+}
	\frac{m(\ell_{x_0},U(x_0);Q (x_0, \varepsilon))}
	{\mathcal L^N (Q (x_0, \varepsilon))}\nonumber\\
&\leq \lim_{\varepsilon \to 0^+}\frac{m(u,U; Q (x_0, \varepsilon))}{\varepsilon^N} - \limsup_{\delta\to 1^-}\lim_{\varepsilon \to 0^+}
	\frac{m(\ell_{x_0},U(x_0);Q (x_0, \delta\varepsilon))}
	{\varepsilon^N}\nonumber\\
	&\hspace{1cm}\leq\limsup_{\delta \to 1^-}\limsup_{\varepsilon \to 0^+} \left(\varepsilon + C(1-\delta^N)+ \frac{|E u|(Q(x_0,\varepsilon)\setminus \overline{Q(x_0,\delta \varepsilon)}+ |{\rm tr}\,\widetilde u-{\rm tr}\, u|(\partial Q(x_0,\delta\varepsilon))}{\varepsilon^N}+ \right. \nonumber\\
	&\left.\hspace{4cm}\frac{C}{(1-\delta^N)^{p-1}} |U(x_0)-\delta^N U(x_0)| ^p\right),\label{mest3}
\end{align}
where in the last line we have used \eqref{ULeb}.  Property  \eqref{DuDsu} 
yields
\begin{align}\label{uapproxest}
\limsup_{\delta \to 1^-}\limsup_{\varepsilon \to 0^+}\frac{|E u|(Q(x_0,\varepsilon)\setminus \overline{Q(x_0,\delta \varepsilon)})}{\varepsilon^N}\leq \lim_{\delta \to 1^-}|\mathcal E u(x_0)|(1-\delta^N)=0.
\end{align}
On the other hand, setting $ u_{\varepsilon, \delta}(y):=\frac{u(x_0+\delta\varepsilon y)-u(x_0)}{\delta\varepsilon}$,   by \eqref{trvaest} and a change of variables, we conclude that 
\begin{align}
\limsup_{\varepsilon \to 0^+} \frac{|{\rm tr}\,\widetilde u-{\rm tr}\, u|(\partial Q(x_0,\delta\varepsilon))}{\varepsilon^N}&=\limsup_{\varepsilon \to 0^+} \delta^N\frac{|{\rm tr}\,\ell_{x_0}-{\rm tr}\, u|(\partial Q(x_0,\delta\varepsilon))}{\delta^N\varepsilon^N}\nonumber\\
&=\limsup_{\varepsilon \to 0^+}\delta^N\int_{\partial Q} | {\rm tr} ( u_{\varepsilon, \delta}-  \nabla  u(x_0)y)|\, \de\mathcal H^{N-1}(y) =0,\label{esttr}
\end{align} 
 where the last step follows from Proposition \ref{traceB} and the fact that $u_{\varepsilon, \delta}$ converges strictly to the function $y \mapsto \nabla u(x_0)y$ on $Q$.  To see the latter, we can follow  the lines of  \cite[Proof of Proposition 4.1]{BFT}. More precisely,  by Theorem \ref{Thm2.8BFT} and \cite[Theorem 2.10]{BFT}   there exists $v \in BD( Q)$ such that
$$ { \lim_{\varepsilon \to 0}   ||u_{\varepsilon, \delta} - P(u_{\varepsilon,\delta}) - v||_{L^1(Q)} = 0,}$$
where $P$ denotes the projection onto the kernel of the operator $E$.
Then, as $ E u_{\varepsilon, \delta}(y) =  E u(x_0 + \delta\varepsilon y)$,  using also \eqref{aproxdif} and \eqref{DuDsu} we get  $u_{\varepsilon, \delta} \to  \nabla u(x_0)y$ in $L^1(Q;\mathbb R^d)$ and 
$|Eu_{\varepsilon,\delta}|(Q)\to |\mathcal E u(x_0)|$ as $\varepsilon \to 0^+$,  i.e., strict convergence holds.

Taking into account \eqref{mest3}, \eqref{uapproxest}, and \eqref{esttr} we conclude that
$$\lim_{\varepsilon \to 0^+}\frac{m(u,U; Q (x_0, \varepsilon))}{\varepsilon^N} \leq \lim_{\varepsilon \to 0^+}
	\frac{m(\ell_{x_0},U(x_0);Q (x_0, \varepsilon))}
	{\varepsilon^N}.$$
Interchanging the roles of $(u,U)$ and $(\ell_{x_0},U(x_0))$, the reverse inequality is proved in a similar fashion. 
This  shows \eqref{toestimate} and completes the proof of \eqref{fproof}.

\emph{Step 2.}	In this step we prove  that,
 for $\mathcal H^{N-1}$-a.e.\ $x_0 \in J_u$,
\begin{align*}
\frac{{\rm d} \mathcal F(u, U; \cdot)}{{\rm d} \mathcal H^{N-1}\lfloor{J_u}}(x_0) = \Phi\big(x_0, u^+(x_0), u^-(x_0), \nu_u(x_0)\big).
\end{align*}
For simplicity of notation, we denote by $\nu$ the unit vector $\nu_u$  and set $v = v_{x_0,u^+(x_0),u^-(x_0),\nu(x_0)}$, see \eqref{stepfun}.  It is well known that, for $\mathcal H^{N-1}$-a.e.\ $x_0 \in J_u$, it holds that 
\begin{align}
& \lim_{\varepsilon \to 0^+}\ave_{Q_{\nu}(x_0,\varepsilon)}|u(x) - v(x)| \, \de x =0;\label{jumppt}\\
& \lim_{\varepsilon \to 0^+}\frac{1}{\varepsilon^{N-1}}|Eu|(Q_{\nu}(x_0,\varepsilon)) = |([u]\odot \nu)(x_0)|;\label{Du}\\
& \frac{{\rm d} \mathcal F(u, U;\cdot)}{{\rm d} \mathcal H^{N-1}\lfloor{J_u}} (x_0) =
\lim_{\varepsilon \to 0^+}\frac{\mathcal F(u, U;Q_{\nu}(x_0,\varepsilon))}{\varepsilon^{N-1}} =
\lim_{\varepsilon \to 0^+}\frac{m(u, U;Q_{\nu}(x_0,\varepsilon))}{\varepsilon^{N-1}};\label{jFmu}\\
& \lim_{\varepsilon \to 0^+}\frac{1}{\varepsilon^{N-1}}
\int_{Q_{\nu}(x_0,\varepsilon)}|U(x)|^p \, {\rm d}x=0.
\label{Uip}
\end{align}
where  Theorem \ref{thm4.3FHP} was used in \eqref{jFmu}.

Let $x_0$ be a fixed point in $\Omega$ satisfying the above properties  and let $\varepsilon >0$ be small enough so that $Q_{\nu}(x_0,\varepsilon)\subset \Omega$. Given the definition of the density $\Phi$ in \eqref{Phi}, due to \eqref{jFmu},
we want to show that
\begin{align}\label{mestjump}
\lim_{\varepsilon \to 0^+}
\frac{m(v,0 ;Q_\nu (x_0, \varepsilon))}
{\varepsilon^{N-1}} =  \lim_{\varepsilon \to 0^+}\frac{m(u,U; Q_\nu(x_0, \varepsilon))}{\varepsilon^{N-1}},
\end{align}
where $0$ is the null function from $\Omega$ to  $\mathbb R^{N\times N}$.

To this end, let $\delta >0$ and let $(\widetilde u, \widetilde U) \in \mathcal C_{StBD^p}(v, 0); Q_\nu(x_0, \delta \varepsilon))$  be such that
\begin{equation}\label{infmvj}
\varepsilon^{N} + m(v,0; Q_\nu(x_0, \delta \varepsilon)) \geq \mathcal F(\widetilde u, \widetilde U; Q_\nu(x_0, \delta \varepsilon)).
\end{equation}
Notice that, as $\widetilde u = v$ on $\partial Q_\nu(x_0, \delta \varepsilon)$, we have
\begin{equation}\label{trvj}
|{\rm tr}\, u-{\rm tr}\,\widetilde u|(\partial Q_\nu(x_0,\delta \varepsilon))
:= \int_{\partial Q_\nu(x_0,\delta\varepsilon)} 
|{\rm tr}(\widetilde u - u )| \, \de \mathcal H^{N-1} 
=\int_{\partial Q_\nu(x_0,\delta\varepsilon)} 
|{\rm tr}(v- u )| \, \de \mathcal H^{N-1}.
\end{equation}
Define
$$
\widetilde v_\varepsilon:=\left\{
\begin{array}{ll}
\widetilde u &\hbox{ in }Q_\nu(x_0,\delta \varepsilon),\\
u &\hbox{ in } \Omega \setminus Q_\nu(x_0,\delta \varepsilon),
\end{array}
\right.
$$
and let
$$\widetilde V_\varepsilon(x):= \left\{
\begin{array}{ll}
\widetilde U(x) & \hbox{ in }Q_\nu(x_0,\delta \varepsilon),\\
\dfrac{1}{\mathcal L^N(Q_\nu(x_0,\varepsilon)\setminus Q_\nu(x_0,\delta \varepsilon))}\displaystyle \int_{Q_\nu(x_0,\varepsilon)} U(x) \, \de x &\hbox{ in } \Omega \setminus Q_\nu(x_0,\delta \varepsilon).
\end{array}\right.
$$
Recall that we have 
$ \displaystyle \int_{Q_\nu(x_0,\delta \varepsilon)} \widetilde U(x) \, \de x=0$.
Thus, 
$(\widetilde v_\varepsilon, \widetilde V_{\varepsilon})$ belongs to the class of admissible  functions 
 $ C_{StBD^p}(u, U; Q_\nu(x_0,\varepsilon))$ \color{black}, and therefore
we obtain, using also Remark \ref{A1A2}, 
\begin{align}
m(u,U;Q_\nu(x_0,\varepsilon))& \leq  
\mathcal F(\widetilde v_\varepsilon,\widetilde V_\varepsilon)\nonumber\\
&\leq \mathcal F(\widetilde u, \widetilde U;Q_\nu(x_0,\delta\varepsilon)) + C\Big(\mathcal L^N(Q_\nu(x_0,\varepsilon) \setminus Q_\nu(x_0,\delta\varepsilon))\nonumber \\ 
& \hspace{2cm} +  \int_{Q_\nu(x_0,\varepsilon)\setminus Q_\nu(x_0,\delta \varepsilon)} |\widetilde V_\varepsilon  |^p \,\de x 
+ |E \widetilde v_\varepsilon|(Q_\nu(x_0,\varepsilon)\setminus Q_\nu(x_0,\delta \varepsilon))\Big)\nonumber \\
&\leq \varepsilon^{N}+ m(v,0; Q_\nu(x_0,\delta\varepsilon)) + C\Big( \varepsilon^N (1-\delta^N) + \int_{Q_\nu(x_0,\varepsilon)\setminus Q_\nu(x_0,\delta \varepsilon)} |\widetilde V_\varepsilon  |^p \,\de x \Big)\notag\\ 
& \hspace{0.8cm}+ C\Big(|E u|(Q_\nu(x_0,\varepsilon)\setminus \overline{Q_\nu(x_0,\delta \varepsilon)})+ |{\rm tr}\,\widetilde u-{\rm tr}\, u|(\partial Q_\nu(x_0,\delta\varepsilon)) \Big). \label{mestjump2}
\end{align}
We have, using  H\"older's inequality,
\begin{align}
\int_{Q_\nu(x_0,\varepsilon)\setminus Q_\nu(x_0,\delta \varepsilon)} 
|\widetilde V_\varepsilon  |^p \, \de x &\leq 
\frac{1}{\varepsilon ^{N(p-1)}(1-\delta^N)^{p-1}}\left|\int_{Q_\nu(x_0,\varepsilon)} U \, \de x \right|^p  
&\leq \frac{\varepsilon^{N(p-1)}}{\varepsilon ^{N(p-1)}(1-\delta^N)^{p-1}} \|U\|^p_{L^p(Q_\nu(x_0,\varepsilon))} \nonumber\\
&= \frac{1}{(1-\delta^N)^{p-1}} 
\|U\|^p_{L^p(Q_\nu(x_0,\varepsilon))}. \label{mest4}
\end{align}
Hence, from \eqref{mestjump2} and \eqref{mest4}, taking into account \eqref{Uip} and Proposition \ref{traceB},  it follows that
\begin{align}
\lim_{\varepsilon \to 0^+}\frac{m(u,U; Q_\nu(x_0, \varepsilon))}{\varepsilon^{N-1}} &\leq 
\limsup_{\delta \to 1^-}\limsup_{\varepsilon \to 0^+} 
\Big(\varepsilon +\frac{m(v,0 ;Q_\nu (x_0,\delta\varepsilon))}
{\varepsilon^{N-1}}\nonumber\\
&\hspace{1cm} + C\Big( \varepsilon(1-\delta^N) + \frac{1}{(1-\delta^N)^{p-1}}
  \frac{1}{\eps^{N-1}} \|U\|^p_{L^p(Q_\nu(x_0,\varepsilon))} \nonumber\\
&\hspace{1cm} +\frac{|E u|(Q_\nu(x_0,\varepsilon)\setminus \overline{Q_\nu(x_0,\delta \varepsilon)})}{\varepsilon^{N-1}} + \frac{|{\rm tr}\,\tilde u-{\rm tr} \, u|(\partial Q_\nu(x_0,\delta\varepsilon))}{\varepsilon^{N-1}}\Big)\Big)\nonumber\\
&\leq \lim_{\varepsilon \to 0^+}\frac{m(v,0; Q_\nu(x_0, \varepsilon))}{\varepsilon^{N-1}} + \limsup_{\delta \to 1^-}(1-\delta^N) |[u](x_0)|\nonumber\\
&= \lim_{\varepsilon \to 0^+}\frac{m(v,0 ; Q_\nu(x_0, \varepsilon))}{\varepsilon^{N-1}}\label{ineq}
\end{align}
since, by \cite[(2.32) and (5.79)]{AFP}   and \eqref{Du},
\begin{align*}
\lim_{\varepsilon \to 0^+}\frac{|Eu|(Q_\nu(x_0,\varepsilon)\setminus \overline{Q_\nu(x_0,\delta\varepsilon)})}{\varepsilon^{N-1}}\leq (1-\delta^N)|[u](x_0)|
\end{align*}
and 
\begin{equation}\label{traces}
\lim_{\varepsilon \to 0^+}\frac{|{\rm tr}\,\tilde u-{\rm tr
} \, u|(\partial Q_\nu(x_0,\delta\varepsilon))}{\varepsilon^{N-1}}=0.
\end{equation}
To prove this last fact, we change variables and use \eqref{trvj} to obtain
\begin{align*}
\lim_{\varepsilon \to 0^+}\frac{|{\rm tr}\,\tilde u-{\rm tr}\, u|(\partial Q_\nu(x_0,\delta\varepsilon))}{\varepsilon^{N-1}}
&=\lim_{\varepsilon \to 0^+}\delta^{N-1}\int_{\partial Q_\nu}
|{\rm tr}(v(x_0+\delta \eps y) - u(x_0+\delta \eps y))| \, \de {\mathcal H}^{N-1}(y)\\
&= \lim_{\varepsilon \to 0^+}\delta^{N-1}\int_{\partial Q_\nu}
|{\rm tr}(v_{x_0,u^+(x_0),u^-(x_0),\nu(x_0)} (x_0 + y)  -u_{\delta, \varepsilon}(y))| 
\, \de {\mathcal H}^{N-1}(y).
\end{align*}
where
$u_{\delta,\varepsilon}(y):= u(x_0+\delta \varepsilon y)$. Then \eqref{jumppt} and \eqref{Du} yield 
$$u_{ \delta,\varepsilon}\to v_{x_0,u^+(x_0),u^-(x_0),\nu(x_0)} (x_0 + \cdot) \mbox{ in } L^1(Q_\nu;\mathbb R^N  ) \mbox{ as } \eps \to 0^+$$
and
$${|E u_{\delta,\varepsilon}|(Q_\nu)=\frac{1}{(\delta\varepsilon)^{N-1}}
|Eu|(Q_\nu(x_0,\delta \varepsilon))\to |[u]\odot \nu|(x_0)=|Ev_{x_0,u^+(x_0),u^-(x_0),\nu(x_0)}|( x_0 + Q_\nu)
\mbox{ as } \eps \to 0^+.}$$
Hence \eqref{traces} follows from Proposition \ref{traceB} and this completes the proof of inequality \eqref{ineq}. The reverse inequality can be shown in a similar way by interchanging the roles of $(u,U)$ and $(v,0)$
leading to the conclusion stated in \eqref{mestjump}.
\end{proof}
 
\begin{proof}[Proof of Theorem \ref{GMthmHSDsbd}]
The result is an immediate consequence of Proposition \ref{GMthmHSD}. \end{proof}

\begin{proof}[Proof of Corollary \ref{remStSBDrep}]
 Under the  assumptions of the corollary,  Remark \ref{traslinv}, Lemmata \ref{estHSDL} and \ref{Lemma 4.2FHP} and Theorem \ref{thm4.3FHP} hold with obvious modifications in the proof, replacing $m$ by $\tilde m$ and $\mathcal C_{StBD^p}$ by $\mathcal C_{StSBD^p}$ and defining the associated objects $\tilde m^\delta$, $\tilde m^\star$ in full analogy with $m^\delta$ and  $m^\star$.  The arguments in the proof of Proposition \ref{GMthmHSD} remain unchanged. 
\end{proof}

\section{Proof of the relaxation results}\label{relax}

We start this section with the proof of Theorem \ref{representation}. 
\begin{proof}[Proof of Theorem \ref{representation}]
	
Given $O \in \mathcal O(\Omega)$ and $(g,G) \in StBD^p(\Omega)$, we introduce the localized version of 
$I_p(g, G)$, namely
\begin{align}\label{Iploc}
{I_p(g,G;O)\coloneqq \inf\Big\{\liminf_{n\to\infty}  F (u_n)\colon \, u_n \in SBD(O),  \ u_n\wSD{*}(g,G) \hbox{ in }O\Big\}.}
\end{align}
Our goal is to verify that $I_p(g,G;O)$ satisfies assumptions {\rm (H1)}--{\rm (H4)} of 
Theorem \ref{GMthmHSDsbd}.

 \emph{Step 1: Proof of {\rm (H1)}.}  We start by proving the following nested subadditivity result: if $O_1, O_2, O_3$ are open subsets of $\Omega$ such that 
$O_1 \Subset O_2 \subseteq O_3$,  then
\begin{equation}\label{nestsub}I_p(g,G;O_3) \leq I_p(g,G;O_2) + I_p(g,G;O_3\setminus \overline {O_1}).
\end{equation}
 Without restriction, this is proved only for $p=1$ as the case $p>1$ is even easier. Let $\lbrace u_n \rbrace \subset SBD(O_2)$ and 
$\lbrace v_n  \rbrace \subset SBD(O_3\setminus \overline{O_1})$ be two sequences such that $u_n \rightarrow g$ in $L^1(O_2;{\mathbb{R}}^N)$, 
$\mathcal E u_n \overset{\ast}{\rightharpoonup} G$ in  $\mathcal M(O_2;\mathbb R^{N\times N}_{\rm sym}) $,  
$v_n  \overset{\ast}{\rightharpoonup}  g$ in $L^1(O_3\setminus \overline{O_1};\mathbb R^N)$, 
$\mathcal Ev_n  \overset{\ast}{\rightharpoonup}  G$ in 
 $\mathcal M(O_3\setminus \overline{O_1};\mathbb R^{N\times N}_{\rm sym}) $,  and 
\begin{equation*}
I_p(g,G;O_2) = \lim_{n\to +\infty}\left[\int_{O_2}W(x, \mathcal E u_n(x)) \, \de x + \int_{J_{u_n}\cap O_2}
\psi(x,[u_n](x),\nu_{u_n}(x)) \, \de\mathcal{H}^{N-1}(x)\right] 
\end{equation*}
as well as
\begin{equation*}
I_p(g,G;O_3\setminus \overline{O_1}) = \lim_{n\to +\infty} 
\left[\int_{O_3\setminus \overline{O_1}}W(x, \mathcal E v_n(x)) \, \de x + \int_{J_{v_n}\cap(O_3\setminus \overline{O_1})} \psi(x,[v_n](x),\nu_{v_n}(x))\, \de\mathcal{H}^{N-1}(x)\right].
\end{equation*}
\noindent Notice that 
\begin{equation}  \label{L1conv}
u_n - v_n \rightarrow 0 \; \text{ in }\; 
L^1( O_2 \setminus \overline{O_1}; {\mathbb{R}}^N)
\end{equation}
For $\delta > 0$, we  define $O_{\delta} := \{ x \in O_2: \,\, \mbox{dist}(x, O_1) < \delta\}$, and for $x \in O_2 $ we let $d(x):= \mbox{dist}(x, O_1)$. Since the distance
function to a fixed set is Lipschitz continuous (see \cite[Exercise 1.1]{Z}), we can apply the change of variables formula \cite[Section 3.4.3, Theorem 2]{EG}, to obtain 
\begin{equation*}
\int_{O_{\delta}\setminus \overline{O_1}} |u_n(x) - v_n(x)| \,  Jd(x)  \, \de x =
\int_{0}^{\delta}\left [ \int_{d^{-1}(y)} |u_n(x) - v_n(x)| \, 
\de \mathcal{H}^{N-1}(x)\right]\, \de y,
\end{equation*}
 where $Jd$ denotes the Jacobian   of  $d$. As $Jd$ is bounded and \eqref{L1conv} holds,
 by Fatou's Lemma, it
follows that for almost every $\rho \in [0, \delta]$ we have 
\begin{equation}  \label{aer}
\liminf_{n\rightarrow +\infty} 
\int_{d^{-1}(\rho)} |u_n(x) - v_n(x) |\, \de\mathcal{H}^{N-1}(x) 
= \liminf_{n\rightarrow +\infty} \int_{\partial O_{\rho}}
|u_n(x) - v_n(x) |\, \de\mathcal{H}^{N-1}(x) = 0.
\end{equation}
Fix $\rho_0 \in [0, \delta]$ such that 
$|G \chi_{O_2}|(\partial O_{\rho_0}) = 0$, 
$|G \chi_{O_3 \setminus \overline{O_1}}|(\partial O_{\rho_0}) = 0$  (where we consider the total variation of the measures $G\chi_{\cdot}\lfloor \mathcal L^N$),  and
such that \eqref{aer} holds. For this choice of $\rho_0$, we may pass to subsequences of  $u_n$ and $v_n$ (not relabeled) such that the liminf in \eqref{aer} is actually a limit. We observe that $O_{\rho_0}$ is a set with
locally Lipschitz boundary since it is a level set of a Lipschitz function
(see, e.g., \cite{EG}). Hence, we can consider $u_n, v_n$ 
on $\partial O_{ \rho_0}$ in the sense of traces and define 
\begin{align}\label{wn}
w_n = 
\begin{cases}
u_n & \text{ in}\; \overline{O}_{\rho_0} \\ 
v_n & \text{ in}\; O_3\setminus \overline{O}_{\rho_0}.
\end{cases}
\end{align}
\noindent Then, by the choice of $\rho_0$, $w_n$ is admissible for 
$I_p(g,G;O_3)$ so, by {\rm \ref{(psi1)}}, 
and \eqref{aer}, we obtain 
\begin{align*}
I_p(g,G;O_3) &\leq  \liminf_{n\to +\infty} 
\left[\int_{O_3}W(x, \mathcal E w_n(x)) \, \de x +
\int_{J_{w_{n}}\cap O_3} \psi(x,[w_n](x),\nu_{w_n}(x))\, \de\cH^{N-1}(x)\right] \\
&\leq  \liminf_{n\to +\infty} 
\left[\int_{O_2}W(x, \mathcal E u_n(x)) \, \de x +
\int_{J_{u_{n}}\cap O_2} \psi(x,[u_n](x),\nu_{u_n}(x))\, \de\cH^{N-1}(x) \right. \\
&  \hspace{1cm} + \int_{O_3\setminus \overline{O_1}}W(x, \mathcal  E v_n(x)) \, \de x  + \int_{J_{v_{n}}\cap (O_3 \setminus \overline{O_1})}
\psi(x,[v_n](x),\nu_{v_n}(x))\, \de\cH^{N-1}(x) \\
&  \left. \hspace{1cm} + \int_{J_{w_n} \cap \partial O_{\rho_0}} 
C |u_n(x) - v_n(x)| \, \de\mathcal{H}^{N-1}(x) \right] \\
& =  I_p(g,G;O_2) + I_p(g,G;O_3 \setminus \overline{O_1}).
\end{align*}
 This concludes the proof of \eqref{nestsub}.

From here, the reasoning in 
\cite[Proposition 2.22]{CF1997}  yields (H1).	 In fact,  the proof also works in the nonhomogeneous case  and in the $BD$-setting since it is exclusively  based on measure theory arguments.

 \emph{Step 2: Proof of {\rm (H2)}--{\rm (H4)}.} 		
To show {\rm  (H2)}, we argue exactly as in \cite[Proposition 5.1]{CF1997}. Indeed, we can prove lower semicontinuity of $I_p(\cdot,\cdot;O)$ along sequences $(g_n,G_n)$ converging in 
$L^1(\Omega;\mathbb R^N)_{\it strong}\times L^p(\Omega;  \mathbb R^{N\times N}_{\rm sym})_{\it weak}$ 
(the second convergence is weakly* in $\cM(\Omega;  \mathbb R^{N\times N}_{\rm sym})$, if $p=1$)   to  $(g, G) \in StBD^p(\Omega)$. (H3) is an immediate consequence of the previous lower semicontinuity property in $O$, as observed in \cite[(2.2)]{BFM}  whereas
(H4) follows by employing  assumptions {\rm \ref{(W1)_p}}, {\rm \ref{W3}}, {\rm \ref{W4}}, and {\rm \ref{(psi1)}}  together with  the lower semicontinuity of integral functionals of power type (or of the total variation along weakly*  converging sequences, if $p=1$),  see   \cite[Lemma 2.18]{CF1997} for details.   (We point out that, to obtain the lower bound in (H4), we   actually need to replace  $W$ by $W + \frac{1}{C_W}$ which however can be done without restriction.)

 \emph{Step 3: Proof of (i).}  Having checked  {\rm (H1)}--{\rm (H4)},  Theorem \ref{GMthmHSDsbd}  can be applied to conclude that,
for every $(g,G)\in StSBD^p(\Omega)$, 
we have
$$I_p(g,G) =  \int_\Omega f\big(x, g(x),  \nabla  g(x), G(x)\big) \, \de x + 
\int_{\Omega\cap J_g}\Phi\big(x, g^+(x), g^-(x),\nu_g(x)\big) \,\de \mathcal H^{N-1}(x),$$
where the relaxed densities $f$ and $\Phi$ are given by  \eqref{f} and \eqref{Phi}, respectively. 

  It is a standard matter to check  that the functional $I_p$ is invariant under translation in the first variable, i.e., 
$$I_p(g+a,G;O)= I_p(g,G;O) \ \text{ for all }  (g,G) \in  StBD^p(\Omega), \  O \in \mathcal O(\Omega), \ a \in \mathbb R^N.$$ 
Indeed, it suffices to notice that, if $\{u_n\}$ is admissible in the definition of  $I_p(g,G;O)$, then the sequence $\lbrace u_n + a\rbrace$ is admissible for $I_p(g+a,G;O)$. Hence, taking into account Remark \ref{traslinv}, and the abuse of notation stated therein, we obtain \eqref{reprelax} with $f$ and $\Phi$ given by 
\eqref{fdef} and \eqref{Phidef}, respectively. In a similar fashion,  the definition \eqref{102} and the fact that $W$ depends only on the symmetrized gradient ensures that $I_p(u+ Mx)= I_p(u)$ for every $M \in \mathbb R^{N \times N}_{\rm skew}$, hence (iii) of Remark \ref{traslinv} applies and therefore $f(x, g(x),\nabla g(x), G(x))
=f(x,   \mathcal E  g(x), G(x))$. 
This shows (i).

 \emph{Step 4: Proof of (ii).}   If $(g, G) \in StBD^p(\Omega)$, then we can invoke  Theorem \ref{GMthmHSDsbd}    to obtain 
\begin{align*}
 f(x_0,   \mathcal E g(x_0), G(x_0)) =\frac{{\rm d} I_p(g,G)}{{\rm d} \mathcal L^N}(x_0) \hbox{ for a.e.\ $x_0 \in \Omega$}\\
\Phi(x_0, [g](x_0),\nu_g(x_0))= \frac{{\rm d} I_p (g, G)}{{\rm d} \mathcal H^{N-1}\lfloor J_g} (x_0)  \hbox{ for }\mathcal H^{N-1}\hbox{-a.e.\ $x_0 \in \Omega$.} 
\end{align*}

\emph{Step 5: Proof of  \eqref{eq. effiproof} in  (iii).}   We claim that for a.e.\ $x_0\in\Omega$ and for every $\xi,B\in\R{N\times N}_{\rm sym} $ it holds that
\begin{equation}\label{clean}
f(x_0,\xi,B)=H_{p}(x_0,\xi,B)=\widetilde{H}_{p}(x_0,\xi,B),
\end{equation}
where 
\begin{align*}
\widetilde{H}_{p}(x_0,\xi,B) \coloneqq& 
\limsup_{\varepsilon \to 0^+} \inf\Big\{\liminf_{n\to \infty}
\int_Q W(x_0+ \varepsilon y,\mathcal E  u_n  (y))\, \de y  +\int_{Q\cap  J_{u_n} } \psi(x_0,[u_n](y),\nu_{u_n}(y))\,\de\cH^{N-1}(y)\colon \notag\\
&\{u_n\} \subset SBD(Q), \  u_n \to \ell_{0,0,\xi}  \hbox{ in } L^1(Q;\mathbb R^N), \
\mathcal E u_n \wsto B \mbox{ in } \cM(Q;\mathbb R^{N\times N}_{\rm sym})\Big\}.
\end{align*}
Notice that we  have readily replaced $ \ell_{0,u(x_0),\xi} $ by $ \ell_{0,0,\xi}$  due to the  translational invariance.
 The proof that $H_p(x_0,\xi,B) \leq \widetilde{H}_{p}(x_0,\xi,B)$ can be obtained as in \cite[Step 2 in Proposition 3.1]{CF1997}, in turn relying on a technique to pass from a given sequence to another admissible one with fixed boundary datum. 
However, instead of modifying the functions as in  \cite[Proposition 2.21]{CF1997}, we proceed as in the  analogous result in our setting (namely \eqref{nestsub}) to define  modifications similarly as in    \eqref{wn} which allows to   obtain  a sequence with precise boundary data from a generic given one.  Let us highlight that one first constructs a new admissible sequence $\lbrace w_n^\eps \rbrace $ for $\eps$ fixed,  and eventually one passes to the limit $\varepsilon \to 0$.  Moreover, careful inspection of the proofs of \eqref{nestsub} and \cite[Step 2 of Proposition 3.1]{CF1997} shows  that the $x$-dependence is not an issue and that the admissible  sequences for $\tilde H_p$ can be replaced by sequences such that $\int_Q \mathcal E \BBB w^\varepsilon_n \EEE \,\de x=B$ holds, namely, by functions  belonging to $\cC^\bulk_p(\xi,B)$.
\EEE


We now address the reverse inequality.  By   the definition of the  relaxed bulk energy density $f$ in \eqref{fdef} and Theorem \ref{thm4.3FHP}
 we get
 \begin{equation}\label{fHtilde}
f(x_0,\xi,B)= \limsup_{\varepsilon \to 0} \frac{m(  \ell_{x_0, 0,\xi},  B;Q(x_0,\varepsilon))}{\varepsilon^N} = \limsup_{\varepsilon \to 0}  \frac{I_{p}(\ell_{x_0, 0,\xi}, B;Q(x_0,\varepsilon))}{\varepsilon^N}. 
\end{equation}
Then, by a simple change of variables argument, using the functions $u^\eps_n(y) = \frac{1}{\eps} u_n(x_0 + \eps y)$ for a given sequence $\lbrace u_n\rbrace$ on $Q(x_0,\varepsilon)$ in the definition of $I_p$,   invoking property {\rm \ref{(psi2)}}, it follows that, for a.e.~$x_0\in \Omega$ and every $\xi,B \in \mathbb R^{N \times N}_{\rm sym}$,  
\begin{equation}\label{Htilde}
\limsup_{\varepsilon \to 0} \frac{I_{p}(  \ell_{x_0, 0,\xi},  B;Q(x_0,\varepsilon))}{\varepsilon^N} = \widetilde{H}_{p}(x_0,\xi,B).
\end{equation}
Now define $\widehat{m}( \ell_{x_0, 0,\xi}, B;Q(x_0,\varepsilon))\coloneqq \inf\big\{E(u;Q(x_0,\varepsilon)): \varepsilon^{-1}u(x_0 + \varepsilon \cdot) \in \mathcal C^\bulk_p(\xi,B)\big\}$.
It is easy to verify that for a.e.~$x_0\in \Omega$ and every $\xi,B \in \mathbb R^{N\times N}_{\rm sym}$ it holds that 
$${m( \ell_{x_0, 0,\xi}, B;Q(x_0,\varepsilon))\leq \widehat{m}( \ell_{x_0, 0,\xi},  B;Q(x_0,\varepsilon)).}
$$
Indeed, for any function $u$ that is admissible for $\widehat{m}( \ell_{x_0, 0,\xi}, B;Q(x_0,\varepsilon))$, 
the pair $(u,\mathcal E u)$ is a competitor for $m( \ell_{x_0, 0,\xi},  B;Q(x_0,\varepsilon))$.
 We now conclude as follows:  by  \eqref{Htilde} and \eqref{fHtilde} we find 
\begin{align*}
\widetilde H_p( x_0, \xi,  B) &= \limsup_{\varepsilon \to 0}  \frac{I_{p}(  \ell_{x_0, 0,\xi},B;Q(x_0,\varepsilon))}{\varepsilon^N} =\limsup_{\varepsilon \to 0} \frac{m( \ell_{x_0, 0,\xi},  B;Q(x_0,\varepsilon))}{\varepsilon^N}\\
&\leq \limsup_{\varepsilon \to 0} \frac{\widehat{m}(\ell_{x_0, 0,\xi},  B;Q(x_0,\varepsilon))}{\varepsilon^N}=H_{p}( x_0, \xi,  B),
\end{align*}
 where the last equality follows by the definition in  \eqref{906}, the continuity property in {\rm \ref{(psi4)}}, and again a change of variables argument. This along with the inequality $H_p(x_0,\xi,B) \leq \widetilde{H}_{p}(x_0,\xi,B)$ shown above and \eqref{fHtilde} yields \eqref{clean}.

 \emph{Step 6: Proof of  \eqref{eq. psiiproof} in  (iii)  for $p>1$.} Theorem \ref{thm4.3FHP}  applied to  $\mathcal F= I_p$ and   yield, 
for every $x_0\in \Omega$, $\lambda \in \mathbb R^{N}$, and
$\nu \in \mathbb S^{N-1}$, 
\begin{align*}
\Phi(x_0, \lambda,  \nu) &=  
\limsup_{\varepsilon \to 0^+}\frac{m(v_{x_0,\lambda,0,\nu}, 0; Q_{\nu}(x_0,\varepsilon))}{\varepsilon ^{N-1}} \\
&=\limsup_{\varepsilon \to 0^+}\frac{I_p(v_{x_0,\lambda,0,\nu}, 0; Q_{\nu}(x_0,\varepsilon))}{\varepsilon ^{N-1}}\\
&= \limsup_{\varepsilon \to 0^+}\frac{1}{\varepsilon^{N-1}}
\inf\Big\{\liminf_{n\to+\infty}\Big[\int_{Q_\nu(x_0,\varepsilon)} 
\hspace{-0,85cm}W(x,\mathcal E u_n(x)) \, \de x + \int_{Q_\nu(x_0,\varepsilon)\cap J_{u_n}}
\hspace{-0,81cm}\psi(x, [u_n](x),\nu_{u_n}(x)) \,\de\mathcal H^{N-1}(x)\Big] \colon\\
&\hspace{0.6cm} u_n \in SBD(Q_\nu(x_0,\varepsilon)), 
u_n \to v_{x_0,\lambda,0,\nu}\mbox{ in }  L^1(Q_\nu(x_0,\varepsilon);\mathbb R^N),  \  \mathcal E u_n \rightharpoonup 0 
\mbox{ in } L^p(Q_\nu(x_0,\varepsilon);\mathbb R^{N\times N}_{\rm sym}) \Big\} \\
&\leq \limsup_{\varepsilon \to 0^+}\frac{1}{\varepsilon^{N-1}}
\inf\Big\{\liminf_{n\to+\infty}\int_{Q_\nu(x_0,\varepsilon)\cap J_{u_n}}
\hspace{-0,78cm}\psi(x, [u_n](x),\nu_{u_n}(x)) \,\de\mathcal H^{N-1}(x) \colon \\
 &\hspace{0.6cm}  u_n \in SBD(Q_\nu(x_0,\varepsilon)), 
u_n \to v_{x_0,\lambda,0,\nu}  \mbox{ in }  L^1(Q_\nu(x_0,\varepsilon);\mathbb R^N),\ \mathcal E u_n  = 0 
\hbox{ a.e.\ in } Q_\nu(x_0,\varepsilon)\Big\},
\end{align*}
where  we have taken into account the growth condition on $W$ given by {\rm \ref{(W1)_p}} and hypothesis  {\rm \ref{W3}}, and the fact that the latter class of  competitors    is contained in the first one.

Given that this last expression no longer depends on the initial bulk density $W$, but only on $\psi$ for which the uniform continuity condition {\rm \ref{(psi4)}} holds, we may apply this condition to replace $x$ by $x_0$ and obtain
\begin{align*}
\Phi(x_0, \lambda, \nu) &\leq \limsup_{\varepsilon \to 0^+}\frac{1}{\varepsilon^{N-1}}
\inf\Big\{\liminf_{n\to+\infty}\int_{Q_\nu(x_0,\varepsilon)\cap J_{u_n}}
\hspace{-0,78cm}\psi(x_0, [u_n](x),\nu_{u_n}(x)) \,\de\mathcal H^{N-1}(x) \colon \\
 &\hspace{0.6cm}  u_n \in SBD(Q_\nu(x_0,\varepsilon)), 
u_n \to  v_{x_0,\lambda,0,\nu} \mbox{ in }  L^1(Q_\nu(x_0,\varepsilon);\mathbb R^N),\ \mathcal E u_n  = 0 
\hbox{ a.e.\ in } Q_\nu(x_0,\varepsilon)\Big\}.
\end{align*}
We can  invoke a periodicity argument entirely similar to the one used in the first part of the proof of \cite[Proposition 4.2]{CF1997}, namely  we define 
\begin{align*}
u:=  v_{x_0,\lambda,0,\nu}+ \phi
\end{align*}
for $\phi \in SBV(Q_{\nu}(x_0,\varepsilon);\mathbb{R}^N)$ 
with $\phi=0$ on $\partial Q_{\nu}(x_0,\varepsilon)$ and $\nabla \phi=0$ $\mathcal L^N$-a.e.\ in $Q_\nu(x_0,\varepsilon)$. Extending $\phi$ periodically to all of $\mathbb R^N$ with period
 $\varepsilon$ and setting $u_n :=   v_{x_0,\lambda,0,\nu}+ \frac{\phi(n (x-x_0))}{n}$, we easily see that $u_n \to  v_{x_0,\lambda,0,\nu}$ in $L^1(Q_\nu(x_0,\varepsilon)$, $\nabla u_n = \mathcal E u_n =0$ $\mathcal L^N$-a.e.,
and $$
\int_{Q_{\nu}(x_0,\varepsilon) \cap J_{u_n}} \psi(x_0,[u_n](x), \nu_{u_n}(x))  \, \de \mathcal H^{N-1}(x) \to \int_{Q_{\nu}(x_0,\varepsilon) \cap J_{u}} \psi(x_0, [u](x), \nu_{u}(x)) \, \de \mathcal H^{N-1}(x)$$
 as $n \to +\infty$.
\color{black}
Thus, we conclude that \begin{align*}
\Phi(x_0,\lambda, \nu) & 
\leq\limsup_{\varepsilon \to 0^+}\frac{1}{\varepsilon^{N-1}}
\inf\Big\{\int_{Q_\nu(x_0,\varepsilon)\cap J_v}
\psi(x_0, [v](x),\nu_v(x)) \,\de\mathcal H^{N-1}(x): 
v \in SBD(Q_\nu(x_0,\varepsilon)),\\
&\hspace{4,2cm}
\mathcal E v(x) = 0 \hbox{ a.e.\ in }  Q_\nu(x_0,\varepsilon),  \ 
v(x) =  v_{x_0,\lambda,0,\nu}(x) \text{ for } x \in \partial Q_\nu(x_0,\varepsilon)  \Big\}.
\end{align*}
By a simple change of variables,  this coincides with
\begin{align*}
&\inf\Big\{\int_{Q_\nu\cap J_v} \hspace{-0.2cm}
\psi(x_0, [u](y),\nu_u(y)) \,\de\mathcal H^{N-1}(y): 
u \in SBD(Q_\nu), \ 
\mathcal E u(x)= 0 \hbox{ a.e.\ in } Q_\nu,  
u(x) =  v_{\lambda,\nu}(x) \text{ for } x \in \partial Q_\nu  \Big\},
\end{align*}
where $v_{\lambda,\nu}$ is given in \eqref{909}.  The latter formula coincides with  $h_p(x_0, \lambda, \nu)$, see \eqref{907}, and thus  it follows that
$$\Phi(x_0,\lambda, \nu) \leq  h_p(x_0, \lambda, \nu).$$

To prove the reverse inequality, we argue as in \cite[Theorems 4.4 and 4.5]
{CF1997}. First, Theorem \ref{thm4.3FHP} guarantees 
\begin{align}\label{y1}
	\Phi(x_0,\lambda,\nu) 
	&=\limsup_{\varepsilon \to 0^+}\frac{I_p( v_{x_0,\lambda,0,\nu},  0; Q_{\nu}(x_0,\varepsilon))}{\varepsilon ^{N-1}}.
    \end{align}
   In particular, there exists $\{u_n\}  \subset SBD(Q_\nu(x_0,\varepsilon))  $ such that 
\begin{align*}
    u_n \to  v_{x_0,\lambda,0,\nu}   \mbox{ in }  L^1(Q_\nu(x_0,\varepsilon);\mathbb R^N), \quad 
		\mathcal E u_n \rightharpoonup 0\mbox{ in } L^p(\Omega;\mathbb R^{N\times N}_{\rm sym}) ,
\end{align*}
and, since $W\geq 0$ (up to adding $\frac{1}{c_W}$),  we deduce
 \begin{align}\label{y2}
 I_p( v_{x_0,\lambda,0,\nu},  0;\Omega)\color{black} &\geq \liminf_{n\to +\infty} \left(\int_\Omega W(x,\mathcal E u_n)\,{\rm d}x+ \int_{\Omega \cap J_{u_n}} \psi(x, [u_n],\nu_{u_n}) \, {\rm d} \mathcal H^{N-1}\right)\notag\\
 &\geq \liminf_{n\to \infty}  \int_{\Omega \cap J_{u_n}} \psi(x, [u_n],\nu_{u_n}) \, {\rm d} \mathcal H^{N-1}.
 \end{align}
 \color{black}
 Up to the extraction of a subsequence, there exists a nonnegative Radon measure $\mu$ such that
 \begin{align*}
 \psi(x, [u_n],\nu_{u_n}) \, {\rm d} \mathcal H^{N-1}\lfloor J_{u_n} \overset{\ast}{\rightharpoonup} \mu.
 \end{align*}
Next, we evaluate  $\frac{{\rm d} \mu}{{\rm d} \mathcal H^{N-1}\lfloor J_{v_{x_0,\lambda,0,\nu}}}$. 
We choose a family $\{\varepsilon\}$ such that $\mu(\partial Q(x_0,\varepsilon))=0$. Thus, setting $u_{n,\varepsilon}(y) := u_n(x_0+\varepsilon y) $, 
\begin{align*}
\frac{{\rm d} \mu}{{\rm d}\mathcal H^{N-1}\lfloor J_{v_{x_0,\lambda,0,\nu}}} &= \lim_{\varepsilon \to 0^+}\frac{1}{ \varepsilon^{N-1}}\lim_{n\to \infty}  \int_{Q_{\nu}(x_0,\varepsilon) \cap J_{u_n}} \psi(x, [u_n](x),\nu_{u_n}(x)) \, {\rm d} \mathcal H^{N-1}(x)\notag \\
&=\lim_{\varepsilon \to 0^+}\lim_{n\to \infty}  \frac{1}{\varepsilon^{N-1}}\int_{Q_{\nu}(x_0,\varepsilon) \cap J_{u_n}} \psi(x_0, [u_n](x),\nu_{u_n}(x)\, {\rm d} \mathcal H^{N-1}(x) \notag\\
&= \lim_{\varepsilon \to 0^+}\lim_{n\to \infty}  \int_{Q_{\nu}  \cap \frac{J_{u_n}-x_0}{\varepsilon}} \psi(x_0, [u_n](x_0+\varepsilon y),\nu_{u_{n}(x_0+\varepsilon y)})\, {\rm d} \mathcal H^{N-1}(y)
\notag \\
 &=\lim_{\varepsilon \to 0^+}\lim_{n\to \infty}  \int_{ Q_{\nu} \cap J_{u_{n,\varepsilon}}} \psi(x_0, [u_{n,\varepsilon}](y),\nu_{u_{n,\varepsilon}}(y))\, {\rm d} \mathcal H^{N-1}(y),
\end{align*}
where  we have exploited {\rm \ref{(psi4)}} and  used a change of variables. 
Arguing as in \cite[Theorem 4.5]{CF1997}, we can extract a diagonal sequence $v_k(y):= u_{n_k}(x_0+ \varepsilon_k y)$ such that $v_k(y)\to  v_{0,\lambda,0,\nu}  $ in $L^1 (Q_\nu)  $, $\mathcal E v_k \rightharpoonup  0$ in $L^p(  Q_\nu; \mathbb R^{N\times N}_{\rm sym})$, and 
\begin{align}\label{y3}
\frac{{\rm d} \mu}{{\rm d}\mathcal H^{N-1}\lfloor J_{v_{x_0,\lambda,0,\nu}}} 
 &= \lim_{k\to \infty}  \int_{ Q_{\nu}  \cap J_{v_k} }\psi(x_0, [v_k](y),\nu_{v_k}(y))\, {\rm d} \mathcal H^{N-1}(y).
\end{align}
 Next we follow \cite[Proposition 4.2]{CF1997} (see also Remark \ref{remStBDconv}). 
By Theorem \ref{Al}, for every $k$ there exists $f_k \in SBV(Q_\nu;\mathbb R^N)$ such that
$\nabla f_k=\mathcal E v_k$ and $|D f_k|(Q_\nu)\leq C \|\mathcal E v_k\|_{L^1(Q_\nu)}$.
By piecewise constant approximation in $BV(Q_\nu;\mathbb R^N)$, there exist $g_{k,m}$ such that  $g_{k,m} \to f_k$ in $L^1(Q_\nu;\mathbb R^N)$ and  $|D g_{k,m}|(Q_\nu)\to |D f_k|(Q_\nu)$ as $m \to +\infty$.
Consequently, we also obtain  
$|E g_{k,m}|(Q_\nu)\to |E f_k|(Q_\nu)$ as $m \to +\infty$.
Define
\begin{align*}
w_{k,m}:= v_k- f_k + g_{k,m}.
\end{align*}
Clearly, $\mathcal E w_{k,m}=0$ $\mathcal L^{N}$-a.e.\ and $\lim_{k\to \infty}\lim_{m \to \infty} \Vert w_{k,m}-  v_{0,\lambda,0,\nu}  \Vert _{L^1(Q_\nu)}=0$. Next, 
we have
\begin{align*}
|D^s f_k|(Q_\nu)+ |D^s g_{k,m}|(Q_\nu) \leq C \int_{Q_\nu}|\mathcal E v_k| \,  {\rm d} x  \to 0, \hbox{ as } k\to +\infty. 
\end{align*}
 Thus, by   {\rm \ref{(psi1)}} and {\rm \ref{(psi3)}} we get 
\begin{align*}
\lim_{k\to \infty}\lim_{m\to \infty}\int_{Q_{\nu} \cap J_{w_{k,m}}} \psi(x_0, [w_{k,m}],\nu_{w_{k,m}}) \, {\rm d} \mathcal H^{N-1} \leq \lim_{k\to \infty} \int_{Q_{\nu} \cap J_{v_k}} \psi(x_0, [v_{k}],\nu_{v_k})\, {\rm d}  \mathcal H^{N-1}.
\end{align*}
Hence, we may extract a diagonal sequence in $(k,m)$, say $\lbrace z_l\rbrace$, such that $z_l \to v_{0,\lambda,0,\nu} $ in $L^1(Q_\nu; \mathbb R^N), \mathcal E z_l = 0 \; \mathcal L^N$-a.e., and
\begin{align}\label{fubini0}
 \lim_{l \to \infty} \int_{Q_{\nu} \cap J_{z_l}} \psi (x_0, [z_l], \nu_{z_l}) \, {\rm d}  \mathcal H^{N-1} \leq \lim_{k \to \infty} \int_{Q_{\nu} \cap J_{v_k}} \psi(x_0, [v_{k}],\nu_{v_k})\, {\rm d}  \mathcal H^{N-1}.
 \end{align}
As in \cite[Proposition 4.2]{CF1997}, we can  change the sequence so that  it  equals $ v_{0,\lambda,0,\nu}$  on $\partial Q_{\nu}  (x_0)  $. Precisely, by Fubini's Theorem there exists $r_l \to 1^-$ such that, upon extracting a subsequence,
\begin{align}\label{fubini}
\int_{\partial Q_\nu (1 - r_l) } | {\rm tr}\; z_l - v_{0,\lambda,0,\nu} |  \, {\rm d} \mathcal H^{N-1} \to 0
\end{align}
as $l \to +\infty$. Define
\begin{equation*}
\tilde z_l = 
\begin{cases}
z_l & \text{ in}\; Q_\nu ( 1 - r_l) \\ 
&\\
 v_{0,\lambda,0,\nu} & \text{ in}\; Q_\nu\setminus \overline{Q_\nu ( 1 - r_l))}.
\end{cases}
\end{equation*}
 Clearly, $\mathcal E \tilde z_l = 0$ a.e., and by \eqref{fubini0}--\eqref{fubini}, {\rm \ref{(psi1)}}, and {\rm \ref{(psi3)}} 
\begin{align*}
 \lim_{l\to \infty} \int_{Q_{\nu} \cap J_{\tilde z_l}} \psi (x_0, [\tilde z_l], \nu_{\tilde z_l}) \, {\rm d} \mathcal H^{N-1} \leq \lim_{k \to \infty} \int_{Q_{\nu} \cap J_{v_k}} \psi(x_0, [v_{k}],\nu_{v_k}) \, {\rm d} \mathcal H^{N-1}.
 \end{align*}
 Thus, in view of \eqref{y1}, \eqref{y2}, and \eqref{y3}, \EEE we have  proved that, for $p > 1$,  
$$\Phi(x_0,\lambda,\nu)  \geq   h_p(x_0, \lambda, \nu)$$
for every $x_0\in \Omega$, $\lambda \in \mathbb R^N$, and 
$\nu \in \mathbb S^{N-1}$, where $h_p$ is the function given in \eqref{907}.

 \emph{Step 7: Proof of  \eqref{eq. psiiproof} in  (iii)  for $p=1$.}  We claim that, for a.e.~$x_0\in\Omega$ and for every $\lambda\in\R{N}$ and $\nu\in\mathbb{S}^{N-1}$, 
\begin{equation}\label{clean2}
\Phi(x_0,\lambda,\nu) =  h_{1}(x_0,\lambda,\nu)=\tilde{h}_{1}(x_0,\lambda,\nu), 
\end{equation}
where  
\begin{align*}
\tilde{h}_{1}(x_0, \lambda,\nu)  \coloneqq& \limsup_{\varepsilon \to 0} \inf\Big\{\liminf_{n\to \infty}
\widetilde{E}_{x_0,\eps}(u_n;Q_\nu) : 
\{u_n\} \subset SBD(Q_\nu), u_n \to  v_{0,\lambda,0, \nu}  \hbox{ in } L^1(Q_\nu;\mathbb R^N),\\
&\phantom{=\limsup_{\varepsilon \to 0^+} \inf\Big\{\liminf_{n\to \infty}
E^\bulk_{x_0,\eps}(u_n;Q):} 
\mathcal E u_n \wsto 0 \mbox{ in } \cM(Q_\nu;\mathbb R_{\rm sym}^{N\times N})\Big\},
\end{align*}
with
\begin{align*}
\widetilde{E}_{x_0,\eps}(u;Q_\nu):= \int_{Q_\nu} \varepsilon W\Big(x_0 + \varepsilon y, \frac{1}{\varepsilon} \mathcal E(u)(y)\Big) \, \de y
+ \!\! \int_{Q_\nu\cap J_u} \!\! \psi\big(x_0,[u](x),\nu_u(x)\big)\,\de\cH^{N-1}(x).
\end{align*}
The inequality $h_{1}(x_0,\lambda,\nu)\leq \tilde{h}_{1}(x_0,\lambda,\nu)$ is obtained arguing as above in the proof of $H_1\leq \tilde{H_1}$. Defining $\mathcal{U}_\eps = \lbrace  \text{$\lbrace u_n \rbrace \subset SBD(Q_\nu(x_0,\varepsilon))\colon$   $u_n \to v_{x_0,\lambda,0,\nu}$ in $L^1(Q_\nu(x_0,\varepsilon);\mathbb R^N)$}, \text{ $\mathcal E u_n \wsto 0$ in $\cM(Q_\nu(x_0,\varepsilon);\mathbb R_{\rm sym}^{N\times N})$} \rbrace$, 
for every $x_0\in \Omega$, $\lambda \in \mathbb R^{N}$, and $\nu \in \mathbb S^{N-1}$, we have by Theorem \ref{thm4.3FHP} and a change of variables 
\begin{eqnarray}\label{starstar}
&&\!\!\!\! \Phi(x_0,\lambda,\nu) = 
\limsup_{\varepsilon \to 0}\frac{m(v_{x_0,\lambda,0, \nu},  0; Q_{\nu}(x_0,\varepsilon))}{\varepsilon ^{N-1}}= \limsup_{\varepsilon \to 0}\frac{I_{1}(  v_{x_0,\lambda,0, \nu},  0; Q_{\nu}(x_0,\varepsilon))}{\varepsilon ^{N-1}}  \\
&\!\!\!\!=&\!\!\!\! 
\limsup_{\varepsilon \to 0}\frac{1}{\varepsilon^{N-1}}
\inf_{\mathcal{U}_\eps}\Bigg\{\liminf_{n\to\infty}\bigg[\int_{Q_\nu(x_0,\varepsilon)}  W(x,\mathcal E u_n(x)) \, \de x + \int_{Q_\nu(x_0,\varepsilon)\cap J_{u_n}} \psi(x, [u_n](x),\nu_{u_n}(x)) \,\de\mathcal H^{N-1}(x)\bigg] \Bigg\} \nonumber\\
&\!\!\!\!=&\!\!\!\! 
\limsup_{\varepsilon \to 0}
\inf_{\mathcal{U}_\eps}\Bigg\{\liminf_{n\to\infty}\bigg[ \eps \int_{Q_\nu}  W(x_0+\eps y,\mathcal E u_n(x_0+\eps y)) \, \de y \nonumber\\
&\!\!\!\!&\!\!\!\!
\phantom{\limsup\limits_{\varepsilon \to 0} \inf\Bigg\{\liminf_{n\to\infty}\bigg[}+ \int_{Q_\nu \cap \eps^{-1}(J_{u_n}-x_0)} \psi(x_0+\eps y, [u_n](x_0+\eps y),\nu_{u_n}(x_0+\eps y)) \,\de\mathcal H^{N-1}(y)\bigg]\bigg\}.\nonumber
\end{eqnarray}
 Setting $\mathcal{V} = \lbrace \text{$\lbrace v_n\rbrace \subset  SBD(Q_\nu)$,  $v_n \to v_{0,\lambda,0,\nu}$ in $L^1(Q_\nu;\mathbb R^N)$, $\mathcal E v_n \overset{*}{\rightharpoonup} 0$ in $\cM(Q_\nu;\mathbb R_{\rm sym}^{N\times N})$} \rbrace$,   we derive 
\begin{align}
 &\Phi(x_0,\lambda,\nu)  \nonumber\\
& =\limsup\limits_{\varepsilon \to 0} \inf\limits_{\mathcal{V}}\Bigg\{\liminf\limits_{n\to\infty}\bigg[\int_{Q_\nu} \eps\, W(x_0+\eps y,\eps^{-1}\mathcal E v_n(y)) \, \de y + \int_{Q_\nu \cap J_{v_n}} \!\!\!\! \psi(x_0+\eps y, [v_n](y),\nu_{v_n}(y)) \,\de\mathcal H^{N-1}(y)\bigg]\bigg\}\nonumber\\
&  
=\limsup\limits_{\varepsilon \to 0} \inf\limits_{\mathcal{V}}\Bigg\{\liminf\limits_{n\to\infty}\bigg[\int_{Q_\nu} \eps\, W(x_0+\eps y,\eps^{-1}\mathcal E v_n(y)) \, \de y + \int_{Q_\nu \cap J_{v_n}} \psi(x_0, [v_n](y),\nu_{v_n}(y)) \,\de\mathcal H^{N-1}(y)\bigg]\bigg\} \nonumber\\
& = 
\limsup\limits_{\varepsilon \to 0} \inf\limits_{\mathcal{V}}\Bigg\{\liminf\limits_{n\to\infty} \widetilde{E}_{x_0,\eps}(v_n;Q_\nu) \Bigg\}= \tilde{h}_{1}(x_0,\lambda,\nu), \label{oh_no}
\end{align}
where we have invoked the continuity of $\psi$ in the first variable, see  {\rm \ref{(psi4)}}. 

Now, define $\widehat{m}^{\surface}(\lambda,\nu;Q_\nu(x_0,\varepsilon))\coloneqq \inf\big\{ F (u;Q_\nu(x_0,\varepsilon))\colon u(x_0 + \varepsilon \cdot) \in \mathcal  C_1^\surface (\lambda,\nu)\big\}$.
 We verify that for a.e.~$x_0\in \Omega$, every $\lambda\in\R{N}$, and $\nu\in\mathbb{S}^{N-1}$ the inequality 
$${m(  v_{x_0\lambda,0,\nu}  ,0;Q_\nu(x_0,\varepsilon))\leq \widehat{m}^{\surface}(\lambda,\nu;Q_\nu(x_0,\varepsilon))}$$
holds true. Indeed,  we  observe that an argument entirely similar to \cite[Proposition 4.1, in turn Step 1 of Proposition 3.1]{CF1997} allows us to obtain a lower bound for $\widehat{m}^{\surface}$ in terms of 
$$\inf\Big\{ \liminf_{n\to \infty} F (u_n;Q_\nu(x_0,\varepsilon))\colon u_n(x_0 + \varepsilon \cdot) \wSD{*} (v_{0, \lambda,0,\nu}  , 0) \hbox{ in }Q_\nu \Big\},$$
which along with a change of variables   gives the inequality. Consequently, 
\begin{align*}
\widetilde{h}_{1}(x_0,\lambda,\nu) = \limsup_{\varepsilon \to 0} \frac{m(v_{x_0,\lambda,0,\nu}, 0;Q_\nu(x_0,\varepsilon))}{\varepsilon^{N-1}}
\leq \limsup_{\varepsilon \to 0} \frac{\widehat{m}^{\surface}(\lambda,\nu;Q_\nu(x_0,\varepsilon))}{\varepsilon^{N-1}}=h_{1}(x_0,\lambda,\nu),
\end{align*}
where  the first equality follows from \eqref{starstar}--\eqref{oh_no} and in the last equality we used again a change of variables argument. Therefore, \eqref{clean2} is proved.
\end{proof}

\color{black}

\begin{proof}[Proof of Proposition \ref{thm_propdens}]
Up to replacing  generic elements in $\mathbb R^{N\times N}$ by symmetric ones and gradients by symmetric gradients, estimates~\eqref{H_B} and~\eqref{JoE2.27} can be proved in the same way as \cite[(2.26)--(2.27)]{BMMOZ2022}, which were proved for the case $p>1$ and can be easily adapted to cover the case $p=1$.

\noindent (i): The thesis can be proved exactly in the same way as \cite[Theorem 2.10]{BMMOZ2022}, the only differences being the symmetry in the fields and the regularity assumptions on~$W$, which now is only measurable with respect to the $x$-variable. 
In the present setting, measurability of $H_{p}^B$ is granted by the fact that $H_{p}$ is a Radon--Nikodym derivative.

\noindent (ii): The results concerning  $H_{1}^B$  can be easily adapted from the proof of \cite[Theorem 2.10]{BMMOZ2022}, whereas those concerning  $h_{1}$ require more care. The symmetry property {\rm \ref{psi_0}} is immediate. 
To prove that  $h_{1}$  satisfies the growth condition from above, it suffices to consider an admissible $u\in \cC^{\surface}_1(\lambda,\nu)$ such that $\mathcal E u=0$ a.e.~in~$\Omega$ and to apply properties {\rm \ref{(W1)_p}} with $p=1$, {\rm \ref{W3}}, and {\rm \ref{(psi1)}} (estimate from above). To prove the estimate from below, \BBB we \EEE use the fact that $W\geq0$ (up to adding a constant $\frac{1}{c_W}$) 
and   apply property {\rm \ref{(psi1)}} (estimate from below).
\end{proof}

\color{black}
\color{black}

{

 }

\begin{proof}[Proof of Theorem \ref{representationbis}]
We recall that, as proven in Theorem \ref{representation},   $I_p \colon StBD^p(\Omega)\times \mathcal 
O(\Omega)  \to [0,+\infty)$ satisfies all the assumptions of  Theorem~\ref{GMthmHSDsbd}. Hence, it admits an integral representation. In view of Theorem~\ref{representation}, we know already how to represent $I_p(g,G;\Omega)$ in $StSBD^p$,
i.e., 
\begin{align*}
I_p(g,G)=\int_\Omega H_p(\mathcal E g ,G) \,{\rm d}x + \int_{\Omega \cap J_g} h_p([g], \nu_g) \, {\rm  d} \mathcal{H}^{N-1}
\end{align*}
for every $(g,G) \in StSBD^p(\Omega)$,  where $H_p$ and $h_p$ are given by \eqref{906} and   \eqref{907}, respectively. It is elementary to check that $H_p$ and $h_p$ are $x$-independent as the same property holds for $W$ and $\psi$  (see also Remark \ref{traslinv}). \color{black}  It  remains only to deduce the Cantor density.

 \emph{Step 1:}  To this end, we start by  observing that, for every $(g,G)\in StBD^p(\Omega)$,
\begin{align}\label{derCantor}
\frac{{\rm d} I_p(g, G)}{{\rm d} |E^c g|}(x_0)= \frac{{\rm d} I_p(g, 0)}{{\rm d} |E^c g|}(x_0) \hbox{ for }|E^c g|\hbox{-a.e.\ }x_0.
\end{align}
This can be proven  by following the same arguments used to compute the derivative with respect the jump part (Hausdorff measure) in \cite[Section 4.2]{AMMZ}. Indeed,  by the definition  of  $I_p$ in \eqref{Iploc},  with $g \in BD(\Omega)$  and $G \in L^p(\Omega;\mathbb{R}^{N \times N}_{\rm sym})$,  for every  $O \in \mathcal O(\Omega)$ \color{black} there exists a sequence $\{u_n\}\subset SBD(O)$ converging to $(g,G)$ in the sense of Definition \ref{StBDconv} such that
\begin{align}\label{recory}
I_p(g,G; O)= \lim_{n \to + \infty} \Big[ \int_O W(\mathcal E u_n) \,{\rm d}x + \int_{O \cap J_{u_n}} \psi([u_n],\nu_{u_n})  \, {\rm d} \mathcal{H}^{N-1}\Big].
\end{align}
Moreover by $(H4)$, $I_p(g, G, \Omega)$ and $I_p(g, 0, \Omega)$ are finite. 
Next, consider  a sequence $\{v_n\}$ given by  Theorem~\ref{silhavy_app} 
such that
\[
	v_n \wsto 0\quad\text{in $BV(O)$,} \qquad \nabla v_n= \mathcal E v_n =  -G\,
\]
and (in view of \cite[(2.3)]{MMOZ}) 
\begin{align}\label{estapp}
|E v_n|(O)  \leq |D v_n|(O) \leq C(N)   \int_O |G| \, \de x. 
\end{align}
In particular, the sequence $w_n \coloneqq u_n+ v_n$
is admissible for $I_p(g, 0;O)$. Recall that 
\[
	J_{u_n}= (J_{u_n} \setminus J_{u_n+v_n}) \cup (J_{u_n}\cap J_{u_n+v_n})
	\quad\text{and}\quad\quad 
	J_{u_n+v_n}=   (J_{u_n}\cap J_{u_n+v_n}) \cup (J_{u_n+v_n}\setminus J_{u_n}).
\]	
Using {\rm \ref{(W1)_p}}, {\rm \ref{W4}}, {\rm \ref{(psi1)}},  {\rm \ref{(psi3)}},   the coercivity of $I_p$,  \eqref{recory}--\eqref{estapp},  H\"older's inequality, and the fact that $I_p(g, G, \Omega)$ is finite, we get 
\begin{align*}
     I_p(g, 0;O)- I_p(g,G;O)  &\leq 		 \liminf_{n \to \infty}\left(\int_O \big(W(\mathcal E u_n+ \mathcal E v_n)- W(\mathcal E u_n)\big)\; \de x \right.
			\\
	&\left. \quad  \quad - \int_{O \cap J_{u_n}} \psi([u_n], \nu_{u_n}) \; \de \mathcal H^{N-1} 
			+ \int_{O \cap J_{u_n+ v_n}} \psi([u_n+ v_n], \nu_{u_n+v_n}) \; \de \mathcal H^{N-1} \right)\\		
		& \leq \limsup_{n\to \infty}\left(\int_{O} \bar{C} (1+ |G|^p + |G||\mathcal{E}u_n|^{p-1})    \;  \de x 
			- \int_{O \cap (J_{u_n} \setminus J_{u_n+ v_n})} \psi([u_n], \nu_{u_n})  \, \de \mathcal H^{N-1}   \right.\\
		&\left. \quad  \quad \quad   + \int_{O \cap (J_{u_n} \cap J_{u_n+ v_n})} C_{\psi} |[v_n]|  \, \de \mathcal H^{N-1} 
			+ \int_{O \cap (J_{u_n+v_n} \setminus J_{u_n})} C_{\psi} |[v_n]|  \, \de \mathcal H^{N-1} \right)\\
&\leq \bar{C} \left(\int_{O} (1+ |G|+ |G|^p)\,{\rm d}x \right)+  \hat{C}\left(\int_O |G|^p \de x \right)^{1/p},
\end{align*}
 where $\bar{C}$ depends on $\psi,W, N$, and $\hat{C}$ depends on $\psi, W, N, p, I_p(g, G, \Omega)^{\frac{p-1}{p}})$, and these   
 constants may vary from line to line. 
Interchanging the order of $I_p(g,0;O)$ and $I_p(g,G;O)$, applying it to any ball $O=  B(x_0,\varepsilon)$ and taking the average on it, in  the limit as $\varepsilon \to 0$ we obtain the desired estimate \eqref{derCantor}. 

\emph{Step 2:} Given Step 1, to compute the Radon-Nykodim derivative of $I_p(g,G)$ with respect to $|E^c g|$, it suffices to fix $G=0$. Observing that $I_p(\cdot,0;\cdot)$ is a functional defined in $BD(\Omega) \times \mathcal 
O(\Omega)$,
and in view of Theorem   \ref{representation}  and Remark \ref{remnox}, 
it satisfies  all  assumptions {\rm (H1)}--{\rm (H5)} of \cite[Section 2.2]{CFVG}. In particular, concerning \cite[{\rm (H4)}]{CFVG}, we observe that clearly there exists a modulus of continuity $\Psi$ such that
$$|I_p(v + g(\cdot - x_0), 0;  x_0 + A) - I_p(g,0; A)| \leq \Psi(|x_0| + |v|)(\mathcal L^N
(A) + |Eg|(A))$$ 
for all $(g, A, v, x_0) \in BD(\Omega) \times \mathcal 
O(\Omega) \times \mathbb R^N\times \Omega$, with $x_0 + A \subset \Omega,$
since $I_p(v + g(\cdot - x_0), 0; x_0 + A) = I_p(g,0; A)$, i.e., $\Psi = 0$.
Eventually,  \cite[{\rm (H5)}]{CFVG}  is satisfied since  our functional  
is invariant under  infinitesimal  rigid motions,  i.e., affine functions $a$ with $\nabla a \in \mathbb{R}^{N \times N}_{\rm skew}$.

Hence, $I_p(g,0;\cdot)$ admits the integral representation stated in  \cite[Theorem 2.3]{CFVG}.  From this representation, we will deduce the Radon-Nykodim derivative of $I_p(g,0;\cdot)$ with respect to the Cantor measure.

In particular, since $ I_p  (\cdot, 0; \cdot)$ is a functional of the type considered in \cite[Theorem 2.3]{CFVG},   we can apply the same considerations made in \cite[Section 5, Lemma 5.1]{CFVG}. Thus,  the bulk density is given by  $\frac{{\rm d} I_p (g,0;\cdot)}{{\rm d} \mathcal L^N}(x_0)= H_p(\mathcal E g(x_0),0)$ for $\mathcal L^N$-a.e.\ $x_0 \in \Omega$. Hence, when applying \cite[Theorem 2.3]{CFVG},  we can conclude that $\frac{{\rm d} I_p(g, 0)}{{\rm d} |E^c g|}(x_0)= H_p^\infty (\frac{{\rm d} E^c g}{|{\rm d} E^c g|}(x_0),0 )$. In fact, in that  theorem, the density of the Cantor part coincides with the weak recession function of the bulk density. 
This fact, together with \eqref{derCantor} and \eqref{frepSD}--\eqref{PhirepBD},   gives \eqref{reprelaxbis},  which concludes the proof.
 \end{proof}



 

 \section{Proof of the linearization result}\label{proof:lin}

In this section we give the proofs of the results announced in Subsection \ref{sec: lin}.

\begin{proof}[Proof of Proposition \ref{prop: rigidity}]
Consider a sequence $\{u_\delta\} \subset SBV^2_2(\Omega;\mathbb{R}^N)$ such that the functions  $y_\delta := {\rm id} + \delta u_\delta$ satisfy \eqref{eq: rot back} and $F_\delta (y_\delta) \leq M$.  We first prove the Poincar\'e-type estimate \eqref{gradientbound}(i) (Step 1). Afterwards,  we apply the $BV$ coarea formula to define suitable level sets for the gradient,  from which the definition of the sets $\{S_\delta\}$  follows (Step 2). Then, we prove \eqref{gradientbound}(ii),(iii) (Step 3) and eventually the compactness result (Step~4).

\emph{Step 1: Proof of \eqref{gradientbound}(i).}   By Poincar\'e's inequality in $BV$ we find $Y^0_\delta \in \mathbb{R}^{N \times N}$ such that 
\begin{align*}
\Vert  \nabla y_\delta - Y_\delta^0 \Vert_{L^{1^*}(\Omega)} \leq |D  (\nabla y_\delta)|(\Omega) \leq \Vert \nabla^2 y_\delta \Vert_{L^1(\Omega)}
+ \int_{J_{\nabla y_\delta}} |[\nabla y_\delta]| \, {\rm d}\mathcal{H}^{N-1},
\end{align*}
where $1^* = \frac{N}{N-1}$. 
Using H\"older's inequality, \eqref{F-en}, $F_\delta(y_\delta) \leq M$, and the fact that $\Psi$ satisfies {\rm \ref{(psi1)}}, we then get
\begin{align}\label{eq: L7XXX0}
\Vert  \nabla y_\delta - Y^0_\delta  \Vert_{L^{1^*}(\Omega)}  \leq (\mathcal{L}^d(\Omega))^{1/2}  \Vert \nabla^2 y_\delta \Vert_{L^2(\Omega)} + c_\Psi^{-1} \int_{J_{\nabla y_\delta}}  \Psi \big(x, [\nabla y_\delta], \nu_{\nabla y_\delta}\big) \, {\rm d}\mathcal{H}^{N-1} \leq C \delta^\beta 
\end{align}
for a constant $C>0$ only depending on $\Omega$ and $M$. Then, there exists a constant $c_0>0$ only depending on $\Omega$ and $M$ such that the set $A_\delta := \lbrace |\nabla y_\delta - Y^0_\delta | \leq  c_0\delta^\beta  \rbrace$ satisfies $\mathcal{L}^N(A_\delta   ) \geq \frac{1}{2} \mathcal{L}^N(\Omega)$. We claim that ${\rm dist}(Y^0_\delta,SO(N)) \leq 2c_0 \delta^\beta$. Otherwise, if we had ${\rm dist}(Y^0_\delta,SO(N)) > 2c_0 \delta^\beta $, we would find ${\rm dist}(\nabla y_\delta,SO(N) ) \geq c_0 \delta^\beta$ on $A_\delta$, and then along with  \ref{assumptions-Wiii} 
$$\frac{1}{\delta^2}\int_\Omega V (\nabla y_\delta) \, {\rm d}x\geq  \frac{\mathcal{L}^N(\Omega)}{2\delta^2}  c \big(c_0\delta^\beta\big)^2.  $$
As $\beta <1$, for $\delta$ small enough the right-hand side exceeds $M$ which contradicts the energy bound $F_\delta(y_\delta) \leq M$. 

Then, from ${\rm dist}(Y^0_\delta,SO(N)) \leq 2c_0 \delta^\beta $, \eqref{eq: rot back}, and \eqref{eq: L7XXX0} we get $|{\rm Id} - Y_\delta^0| \leq C \delta^\beta$. Again by \eqref{eq: L7XXX0} this implies 
\begin{align}\label{eq: L7XXX1}
\Vert  \nabla y_\delta - {\rm Id} \Vert_{L^{1^*}(\Omega)}  \leq   C \delta^\beta. 
\end{align}
We recall the linearization formula  (see \cite[(3.20)]{FrieseckeJamesMueller:02}) 
\begin{align}\label{rig-eq: linearization}
|{\rm sym}(Z -{\rm Id})| =  {\rm dist}(Z,SO(N)) + {\rm O} (|Z- {\rm Id}|^2) \quad \text{for $Z \in \mathbb{R}^{N \times N}$.}
\end{align}
By distinguishing the cases $|Z- {\rm Id}| \leq N  $ and $|Z- {\rm Id}|>  N $, it is elementary \color{black} to see that 
\begin{align*}
|{\rm sym}(Z -{\rm Id})| \leq   {\rm dist}(Z,SO(N)) + C\big(|Z- {\rm Id}|^{1^*} +  {\rm dist}^2(Z,SO(N))\big) \quad \text{for $Z \in \mathbb{R}^{N \times N}$.}
\end{align*} 
Then, from \ref{assumptions-Wiii}, H\"older's inequality,  \eqref{eq: L7XXX1}, and $F_\delta( y_\delta ) \leq M$  we get
\begin{align*}
\int_\Omega |{\rm sym}(\nabla y_\delta -{\rm Id})| \, {\rm d}x \leq C (\mathcal{L}^N(\Omega))^{1/2} (M\delta^2)^{1/2} + CM\delta^2 + C(\delta^\beta)^{1^*}.
\end{align*}
By the fact that  $\beta \geq \frac{N-1}{N}$ and the definition $y_\delta = {\rm id} + \delta u_\delta$ we then deduce
\begin{align}\label{compi1}
\Vert \mathcal{E} u_\delta \Vert_{L^1(\Omega)} \leq C.
\end{align}
By {\rm \ref{(psi1)}}--{\rm \ref{(psi2)}}, \eqref{F-en}, and $F_\delta(y_\delta) \leq M$ we further find
\begin{align}\label{compi2}
|D^{\rm j} u_\delta |(\Omega) \leq c_\psi^{-1} \int_{J_{u_\delta}} \psi( [u_\delta], \nu_{u_\delta}) \, {\rm d}\mathcal{H}^{N-1} = c_\psi^{-1} \frac{1}{\delta}\int_{J_{y_\delta}} \psi( [y_\delta], \nu_{y_\delta}) \, {\rm d}\mathcal{H}^{N-1} \leq CM.
\end{align}
The two estimates \eqref{compi1}--\eqref{compi2} show $|Eu_\delta|(\Omega) \leq C$. From \eqref{eq: rot back} we further infer that $\fint_\Omega u_\delta \, {\rm d}x = 0$ and $\fint_\Omega (\nabla u_\delta^T- \nabla u_\delta) \, {\rm d}x = 0$. Indeed, to see that latter, we write  $A:=  \fint_\Omega \nabla y_\delta \, {\rm d}x$ as the polar decomposition $A = RS$ for an orthogonal matrix $R$ and a symmetric matrix $S$. Then, \eqref{eq: rot back} indeed shows $R = {\rm Id}$, and thus $A$ is symmetric.  Then,  the variant of the  Korn-Poincar\'e inequality in $BD$ stated in Proposition \ref{prop: KP} below  yields
$$ \Vert u_\delta \Vert_{L^1(\Omega)} \leq C|Eu_\delta|(\Omega) + C|Du_\delta(\Omega)| \leq C|Eu_\delta|(\Omega) + C| D^{\rm j}  u_\delta(\Omega) + \int_\Omega \mathcal{E}u_\delta \, {\rm d}x|   \leq C, $$
where we also used \eqref{compi1}--\eqref{compi2}. This concludes the proof of \eqref{gradientbound}(i).

\emph{Step 2: Construction of the sets $S_\delta$.} Next, we apply the $BV$ coarea formula (see \cite[Theorem 3.40]{AFP}) on each component $(\nabla y_\delta)_{ij} \in SBV^2(\Omega)$, $1 \leq i,j\leq N$,  to write 
\begin{align*}
 \int_{-\infty}^\infty \mathcal{H}^{N-1}\big( \Omega   \cap \partial^* \lbrace (\nabla y_\delta)_{ij} > t \rbrace \big) \,{\rm d}t &= |D  (\nabla y_\delta)_{ij}|(\Omega ) = \Vert \nabla^2 y_\delta \Vert_{L^1(\Omega)}
+ \int_{J_{\nabla y_\delta}} |[\nabla y_\delta]| \, {\rm d}\mathcal{H}^{N-1} .
\end{align*}
Then, similarly as in \eqref{eq: L7XXX0}, we get  
\begin{align}\label{eq: L7XXX}
 \int_{-\infty}^\infty \mathcal{H}^{ N -1}\big(\Omega   \cap \partial^* \lbrace (\nabla y_\delta)_{ij} > t \rbrace \big) \,{\rm d}t   \leq C \delta^\beta. 
\end{align}
Fix $\gamma \in (\frac{2}{3},\beta)$ and define $T_\delta= \delta^{\gamma}$. For all  $k \in \mathbb{Z}$ we find $t^{ij}_k \in (kT_\delta, (k+1)T_\delta]$    such that 
\begin{align}\label{eq: coarea}
\mathcal{H}^{N-1}\big( \Omega  \cap   \partial^*\lbrace (\nabla y_\delta)_{ij} >t^{ij}_k \rbrace \big) \leq \frac{1}{T_\delta} \int_{kT_\delta}^{{(k+1)T_\delta}} \mathcal{H}^{N-1}\big( \Omega  \cap  \partial^*\lbrace (\nabla y_\delta)_{ij} >t \rbrace \big)\, {\rm d}t.
\end{align}
Let $G_k^{\delta,ij} = \lbrace (\nabla y_\delta)_{ij} > t^{ij}_{k} \rbrace \setminus \lbrace (\nabla y_\delta)_{ij} > t^{ij}_{k+1} \rbrace$ and note that each set has finite perimeter in $\Omega$ since it is the difference of two sets of finite perimeter.  Now,  \eqref{eq: L7XXX} and  \eqref{eq: coarea} yield
\begin{align}\label{eq: nummer geben}
\sum\nolimits_{k\in\mathbb{Z}} \mathcal{H}^{N-1}\big( \Omega  \cap \partial^* G_k^{\delta, ij}  \big) \leq 2T^{-1}_\delta \cdot C\delta^\beta \leq C \delta^{\beta-\gamma}.
\end{align}
Since $\mathcal{L}^N(\Omega \setminus \bigcup_{k \in \mathbb{Z}} G_k^{\delta, ij})=0$, $\{G_k^{\delta,ij}\}_{k\in \mathbb{Z}}$ is a Caccioppoli partition of $\Omega$.  

We let $\{P^\delta_l\}_{l\in\mathbb{N}}$ be the Caccioppoli partition of $\Omega$ consisting of the nonempty sets of 
$$\big\{ G_{k_{11}}^{\delta, 11} \cap G_{k_{12}}^{\delta, 12}  \cap \ldots \cap  G_{k_{NN}}^{\delta, NN}: \  k_{ij} \in \mathbb{Z}\text{ for } i,j=1,\ldots,N\big\}.$$
Then,   \eqref{eq: nummer geben}  implies 
\begin{align}\label{eq: coarea3}
\sum\nolimits_{l =1}^\infty\mathcal{H}^{N-1}\big(\partial^*P_l^\delta \cap \Omega   \big) \leq C\delta^{\beta-\gamma}
\end{align}
\color{black}
for a constant $C>0$ independent of $\delta$.   Recalling $T_\delta = \delta^{\gamma}$ we get $|t_k^{ij} -t_{k+1}^{ij}| \leq 2T_\delta=2\delta^\gamma$ for all $k \in \mathbb{Z}$, $i,j=1,\ldots,N$. Therefore, by the definition of $G_k^{\delta,ij}$, for each component $P^\delta_l$ of the Caccioppoli partition, we find a matrix $Z_l^\delta \in \mathbb{R}^{N\times N}$ such that

\begin{align}\label{eq: coarea4}
\Vert\nabla y_\delta - Z_l^\delta \Vert_{L^\infty(P^\delta_{l})} \leq C\delta^{\gamma}. 
\end{align}
Without restriction we order the components $\{P^\delta_ {l}\}_l$ such that $\mathcal{L}^N(P_l^\delta)$ is decreasing in $ l$.  As by  this ordering at most $P^\delta_1$ can have volume larger or equal to $\frac{1}{2}\mathcal{L}^N(\Omega)$, we get   $ \mathcal{L}^N(P^\delta_{ l}) \leq \frac{1}{2}\mathcal{L}^N(\Omega)$ for all $ l \geq 2$. 
Then, defining $S_\delta := \bigcup_{l \geq 2} P_l^\delta$, we get by \eqref{eq: coarea3} and the relative isoperimetric inequality in $\Omega$, 

\begin{align}\label{voluem}
{\mathcal L}^N(\Omega \setminus P_1^\delta) = {\mathcal L}^N(S_\delta) = \sum_{l\geq 2} {\mathcal L}^N(P_l ^\delta) \leq \sum_{l\geq 2} C_\Omega \big(\mathcal{H}^{N-1}(\partial^* P^\delta_{l} \cap \Omega) \big)^{\frac{N}{N-1}} \leq C \sum_{l\geq 2} \mathcal{H}^{N-1}(\partial^* P^\delta_{l} \cap \Omega)   \leq C\delta^{\beta -\gamma}.
\end{align}
Again using \eqref{eq: coarea3}, we also find $\mathcal{H}^{N-1}(\partial^* S_\delta \cap \Omega) \leq C\delta^{\beta-\gamma}$. As $\gamma < \beta$,   \eqref{S-conv} follows from Lemma \ref{lemma: iso} below.

\emph{Step 3: Proof of \eqref{gradientbound}(ii),(iii).}   First, \eqref{voluem} implies $\mathcal{L}^N(P^\delta_1) \geq \frac{1}{2}\mathcal{L}^N(\Omega)$ for $\delta$ small enough. This along with  \eqref{eq: L7XXX1} and \eqref{eq: coarea4} yields, for a constant $C>0$ only depending on $M$ and $\Omega$,
\begin{align*}
|Z^\delta_1 - {\rm Id}| \leq C\Vert  Z_1^\delta  - {\rm Id} \Vert_{L^1(P^\delta_1)} \leq  C\Vert \nabla y_\delta  - {\rm Id} \Vert_{L^1(P^\delta_1)}  + C\delta^\gamma \leq C\delta^\beta + C\delta^\gamma \leq C\delta^\gamma,
\end{align*}
where we again used H\"older's inequality. This together with \eqref{eq: coarea4}  shows  
\begin{align}\label{eq: coarea5}
\Vert  \nabla y_\delta - {\rm Id} \Vert_{L^\infty(P^\delta_1)} \leq C\delta^{\gamma}. 
\end{align}
Recalling $P^\delta_1 = \Omega \setminus S_\delta$  and $\nabla y_\delta = {\rm Id} + \delta \nabla u_\delta$, we obtain \eqref{gradientbound}(ii).  We proceed with the proof of \eqref{gradientbound}(iii). By \eqref{rig-eq: linearization} and  Young's inequality  we have
\begin{align*}
|{\rm sym}(Z -{\rm Id})|^2& \leq  C {\rm dist}^2(Z,SO(N))+ C|Z-{\rm Id}|^4 .
  \end{align*}
Then, we obtain
\begin{align*}
\int_{\Omega \setminus S_\delta} |{\rm sym}( \nabla y_\delta - {\rm Id})|^2 &\leq C\int_{\Omega \setminus S_\delta}  \big( {\rm dist}^2(\nabla y_\delta ,SO(N)) +  |\nabla y_\delta-{\rm Id}|^4 \big) \, {\rm d}x ,
\end{align*}
and therefore \eqref{eq: coarea5} yields
 \begin{align*}
\int_{\Omega \setminus S_\delta} |\mathcal{E}  u_\delta |^2 \, {\rm d}x = \frac{1}{\delta^2}  \int_{\Omega \setminus S_\delta} |{\rm sym}( \nabla y_\delta - {\rm Id})|^2 \, {\rm d}x  \leq   C\frac{1}{\delta^2} \int_{\Omega  } {\rm dist}^2(\nabla y_\delta,SO(N)) \, {\rm d}x + C\frac{1}{\delta^2} \mathcal{L}^d(\Omega) \delta^{4\gamma}  \leq C,
\end{align*}
where in the last step we have   used $\gamma >\frac{2}{3} \geq \frac{1}{2}$, \ref{assumptions-Wiii},  \eqref{F-en},  and $F_\delta(y_\delta) \leq M$. This shows   \eqref{gradientbound}(iii).

\emph{Step 4: Compactness.} We recall from \eqref{compi1}--\eqref{compi2} that  $|Eu_\delta|(\Omega) \leq C$ for $C>0$ independent of $\delta$. Thus, by a compactness result in the space $BD$ we find $g \in BD(\Omega)$ such that, up to a subsequence (not relabeled), $u_\delta \to  g $ in $L^1(\Omega; \mathbb{R}^N)$. The existence of $G \in L^2(\Omega;\mathbb{R}^{N \times N}_{\rm sym})$ such that $\mathcal E u_\delta \, \chi_{\Omega \setminus S_\delta} \rightharpoonup G$ in $L^2(\Omega; \mathbb R^{N\times N}_{\rm sym})$ follows by  \eqref{gradientbound}(iii) and weak compactness in $L^2$.
\end{proof}

As a preparation for the proof of the $\Gamma$-convergence result, we need the following lemma.

\begin{lemma}\label{lem- eu}
Let $u \in BD(\Omega)$ and let $\{P_n\} \subset \Omega$ be a sequence of smooth closed sets with $\mathcal{H}^{N-1}(\partial P_n ) \to 0$ as $n \to \infty$. Then, $|Eu|(P_n) \to 0 $ as $n \to +\infty$. 
\end{lemma}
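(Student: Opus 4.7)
The plan proceeds through the Lebesgue decomposition $|Eu|=|\mathcal E u|\,\mathcal L^N+|E^s u|$.

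First I would apply the absolute isoperimetric inequality in $\mathbb R^N$ to each smooth compact set $P_n\subset\Omega$:
\begin{equation*}
\mathcal L^N(P_n)^{(N-1)/N}\le C_N\,\mathrm{per}(P_n)=C_N\,\mathcal H^{N-1}(\partial P_n)\to 0,
\end{equation*}
which upgrades the hypothesis to $\mathcal L^N(P_n)\to 0$ as well. The absolutely continuous contribution $\int_{P_n}|\mathcal E u|\,\de x\to 0$ is then immediate by absolute continuity of the Lebesgue integral, since $|\mathcal E u|\in L^1(\Omega)$.

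The singular part $|E^s u|(P_n)$ requires the hypothesis $\mathcal H^{N-1}(\partial P_n)\to 0$ in an essential way, because $|E^s u|$ may concentrate on a set of zero Lebesgue measure---for instance on the rectifiable jump set $J_u$. My strategy would be to combine strict density of smooth maps in $BD(\Omega)$ (available from Proposition \ref{prop:densityuptoboundary}) with the rectifiable structure of the support of $|E^s u|$. Concretely, I would pick $u_k\in C^\infty(\overline\Omega;\mathbb R^N)$ with $u_k\to u$ in $L^1$ and $\int_\Omega|\mathcal E u_k|\,\de x\to |Eu|(\Omega)$, so that $|Eu_k|\rightharpoonup^\ast|Eu|$ strictly. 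For each $n$ and a.e.\ $\delta>0$, the open neighborhood $P_n^\delta=\{x\colon d(x,P_n)<\delta\}$ satisfies $|Eu|(\partial P_n^\delta)=0$ with $\mathcal H^{N-1}(\partial P_n^\delta)$ and $\mathcal L^N(P_n^\delta)$ comparable to those of $P_n$; strict convergence would give $|Eu|(P_n)\le |Eu|(P_n^\delta)=\lim_k\int_{P_n^\delta}|\mathcal E u_k|\,\de x$. A countable $C^1$-decomposition $J_u=N_0\cup\bigcup_{\ell\ge 1}M_\ell$ (with $\mathcal H^{N-1}(N_0)=0$ and $M_\ell$ smooth hypersurfaces), combined with the geometric fact that a smooth closed set $P$ with small perimeter can contain only a portion of $M_\ell$ controlled by $\mathcal H^{N-1}(\partial P)$ (via approximate tangent planes and a Besicovitch-type covering), would let us bound $|E^j u|(P_n)=\int_{J_u\cap P_n}|[u]\odot\nu_u|\,\de\mathcal H^{N-1}\to 0$. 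The Cantor part, vanishing on $\sigma$-finite $\mathcal H^{N-1}$-sets, would be handled by outer regularity.

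The main obstacle is precisely the quantitative control of the singular part: one must show that a smooth closed set with vanishing perimeter cannot trap a macroscopic portion of the rectifiable $(N-1)$-dimensional jump set $J_u$, and this must be done uniformly along the sequence $\{P_n\}$. This geometric step---combining rectifiability, tangent-plane approximation and a covering argument to turn the vanishing $\mathcal H^{N-1}(\partial P_n)$ into a vanishing bound on $\int_{J_u\cap P_n}|[u]\odot\nu_u|\,\de\mathcal H^{N-1}$---is the delicate point that lies beyond the simple absolute-continuity argument used for the Lebesgue-absolutely-continuous part.
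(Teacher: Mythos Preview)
Your treatment of the absolutely continuous part via the isoperimetric inequality is correct, but the singular part has real gaps. The Cantor part argument is wrong: $E^c u$ vanishes on sets of $\sigma$-finite $\mathcal H^{N-1}$ measure, but $P_n$ is a full $N$-dimensional closed set with $\mathcal H^{N-1}(P_n)=+\infty$ in general, so that property gives you nothing, and ``outer regularity'' does not rescue it. For the jump part, your sketch (decompose $J_u$ into countably many $C^1$ pieces $M_\ell$ and argue that a set of small perimeter cannot trap much of each $M_\ell$) is plausible in spirit but you would need a uniform estimate across the countable family together with a tail control, and you have not indicated how to get either. The strict-approximation detour through $u_k\in C^\infty$ adds nothing: once you pass to smooth $u_k$ you are back to the absolutely continuous case you already handled, and the limit $k\to\infty$ does not recover control on $|E^s u|(P_n)$.

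The paper bypasses all of this with a one-line slicing argument that treats the three parts simultaneously. For each direction $\xi$, the slicing formula gives
\[
|(Eu\,\xi,\xi)|(P_n)\le \int_{P_n^\xi}|Du^\xi_y|(\Omega^\xi_y)\,\de\mathcal H^{N-1}(y),
\]
where $P_n^\xi$ is the orthogonal projection of $P_n$ onto $\xi^\perp$. The key geometric fact is $\mathcal H^{N-1}(P_n^\xi)\le \mathcal H^{N-1}(\partial P_n)\to 0$ (area formula), and since $y\mapsto |Du^\xi_y|(\Omega^\xi_y)$ lies in $L^1(\xi^\perp)$, absolute continuity of the Lebesgue integral on $\xi^\perp$ gives the conclusion. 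Summing over finitely many directions $\xi$ controls the full $|Eu|(P_n)$. This is exactly the ``small perimeter $\Rightarrow$ small shadow'' mechanism you were reaching for, but applied to the one-dimensional slices of $u$ rather than to a rectifiable decomposition of $J_u$, and it automatically covers the Cantor part.
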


\begin{proof}
We use the characterization of $BD$ functions  by slicing established in \cite{ACDM}. To this end, we first need to introduce some further notation. For $\xi \in \mathbb{S}^{N-1}$, we let
\begin{equation*}
\xi^\perp:=\{y\in  \mathbb{R}^N  \colon y\cdot \xi=0\},\qquad B^\xi_y:=\{t\in \mathbb{R}\colon y+t\xi \in B\} \ \ \ \text{ for any $y\in \mathbb{R}^N$ and $B\subset \mathbb{R}^N$}\,,
\end{equation*}
and for every function $v\colon B\to  \mathbb{R}^N  $ and $t\in B^\xi_y$ let
\begin{equation*}
v^\xi_y(t):=v(y+t\xi) \cdot \xi\,.
\end{equation*}
By  \cite[Proposition 3.2]{ACDM}, given $u \in BD(\Omega)$ and $\xi \in \mathbb{S}^{N-1}$, the function $y \mapsto |Dv^\xi_y|(\Omega^\xi_y)$ lies in $L^1(\xi^\perp;\mathcal{H}^{N-1})$ with  
\begin{align}\label{slicing}
(Ev \xi,\xi)(B) = \int_{\xi^\perp} Dv^\xi_y(B^\xi_y)\, {\rm d}\mathcal{H}^{N-1}  (y) \quad \quad  \text{for any Borel set $B\subset \mathbb{R}^N$}. 
\end{align}
Let $\{e_i\}_{i=1,\ldots,N}$ be an orthonormal basis of $\mathbb{R}^N$ and let $\mathcal{V} = \lbrace e_i + e_j \colon i,j=1,\ldots,N\rbrace$. Clearly, we have
$$|Eu|(B) \leq \sum_{\xi \in \mathcal{V}}|(Eu \xi,\xi)|(B) \quad \text{for any Borel set $B\subset \mathbb{R}^N$}.    $$
Thus, in order to prove the statement, it suffices to show that $|(Eu \xi,\xi)|({P_n}) \to 0$ as $n \to \infty$ for each $\xi \in \mathcal{V}$.  

Fix any $\xi \in \mathcal{V}$ and denote by $P_n^\xi$ the orthogonal projection of $P_n$ onto $\xi^\perp$. By the area formula it is elementary to check that  $\mathcal{H}^{N-1}(P_n^\xi) \leq \mathcal{H}^{N-1}(\partial P_n)$.    As $ \mathcal{H}^{N-1}(\partial P_n) \to 0$, we thus get $\mathcal{H}^{N-1}(P_n^\xi) \to 0$ for $n \to \infty$. By \eqref{slicing} we derive
\begin{align*}
|(Eu \xi,\xi)|(P_n) =   \int_{\xi^\perp} |Dv^\xi_y|\big((P_n)^\xi_y\big)\, {\rm d}\mathcal{H}^{N-1} \leq  \int_{P_n^\xi} |Dv^\xi_y|(\Omega^\xi_y)\, {\rm d}\mathcal{H}^{N-1}. 
\end{align*}
As $\mathcal{H}^{N-1}(P_n^\xi) \to 0$ for $n \to \infty$ and  the function $y \mapsto |Dv^\xi_y|(\Omega^\xi_y)$ lies in $L^1(\xi^\perp;\mathcal{H}^{N-1})$, we conclude $|(Eu \xi,\xi)|(P_n) \to 0$ as $n\to \infty$.
\end{proof}

We close with the proof of Theorem \ref{th: gamma-conv}.

\begin{proof}[Proof of Theorem \ref{th: gamma-conv}]
(i)  Consider $(g,G)\in StBD^2(\Omega)$ and a sequence $\{u_\delta\}$ with $u_\delta  \rightsquigarrow (g,G)$. It is not restrictive to assume that $\sup_{\delta >0} {F}^{\rm dis}_{\delta} (u_\delta) < +\infty$ as otherwise there is nothing to show.  We apply Proposition~\ref{prop: rigidity} and let $\{S_\delta\}$ be the sets corresponding to $\{u_\delta\}$. Clearly, by $u_\delta  \rightsquigarrow (g,G)$ and Proposition~\ref{prop: rigidity} we get  $\mathcal E u_\delta \, \chi_{\Omega \setminus S_\delta} \rightharpoonup G$ weakly in $L^2(\Omega; \mathbb R^{N\times N}_{\rm sym})$ as $\delta \to 0$.

 Up to passing to a subsequence (not relabeled), and still using a continuous parameter  for the sequence for the sake of brevity, we can assume that 
\begin{align}\label{sumdelta}
\sum_{\delta >0}   \mathcal{H}^{N-1}(\partial^* S_\delta)    < +\infty  
\end{align}
and 
\begin{align}\label{liminf}
\liminf_{\delta \to 0} F^{\rm dis}( u_\delta, \Omega) = \lim_{\delta\to 0} \int_\Omega W(\mathcal E u_\delta)\,\de x+ \lim_{\delta\to 0} \int_{\Omega\cap J_{u_\delta}} \psi([u_\delta](x), \nu_{u_\delta}(x))\,{\rm d}\mathcal{H}^{N-1}(x),
\end{align}
where we recall the definition of $F^{\rm dis}$ in \eqref{Elin}. Following verbartim the proof of \cite[Theorem 2.7]{Friedrich}, by using \eqref{gradientbound}(ii) for $\gamma >\frac{2}{3}$, we get
$$\liminf_{\delta \to 0}  \int_\Omega \frac{1}{\delta^2} V( {\rm Id} + \delta \nabla u_\delta) \, {\rm d}x \geq \liminf_{\delta \to 0}  \int_{\Omega \setminus S_\delta} W(\mathcal{E} u_\delta) \, {\rm d}x. $$
Therefore, recalling $y_\delta = {\rm id} + \delta u_\delta$, \eqref{F-en} and \eqref{def-energy},    by \eqref{liminf} and the nonnegativity of $\psi$, $\Psi$, as well as by {\rm \ref{(psi2)}}  we get
\begin{align}\label{prepari}
\liminf_{\delta \to 0} {F}^{\rm dis}_{\delta} (u_\delta) \geq \liminf_{\delta \to 0} \int_{\Omega  \setminus S_\delta } W(\mathcal{E} u_\delta) \, {\rm d}x+ \lim_{\delta\to 0} \int_{\Omega\cap J_{u_\delta}} \psi([u_\delta], \nu_{u_\delta})\,{\rm d}\mathcal{H}^{N-1} \geq \liminf_{\delta \to 0} F^{\rm dis}(u_\delta,\Omega \setminus S_{\delta\color{black}}).
\end{align}
We define $T_\eta = \bigcup_{0 <\delta \leq \eta} S_\delta$ and note that $ \mathcal{H}^{N-1}(\partial^* T_\eta) \to 0$ as $\eta \to 0$ by \eqref{sumdelta}.  We can approximate $T_\eta$ by a smooth closed set $U_\eta \supset T_\eta$ with $\mathcal{H}^{N-1}(\partial U_\eta) \leq C\mathcal{H}^{N-1}(\partial^* T_\eta)$, see e.g.\ \cite[Equation (1.9)]{schmidt}. Therefore, using also the isoperimetric inequality  we have 
\begin{align}\label{etalimit}
 \mathcal{L}^{N}(U_\eta)  +  \mathcal{H}^{N-1}(\partial U_\eta) \to 0 \quad \text{ as $\eta \to 0$.}
 \end{align}
Since $W$ and $\psi$ are nonnegative densities, we derive from \eqref{prepari} and an application of  Theorem \ref{representationbis} on $F^{\rm dis}$ that
\begin{align*}
\liminf_{\delta \to 0} {F}^{\rm dis}_{\delta} (u_\delta) & \geq \liminf_{\delta \to 0} F^{\rm dis}(u_\delta,\Omega \setminus T_\eta) \geq \liminf_{\delta \to 0} F^{\rm dis}(u_\delta,\Omega\setminus U_\eta) \\
&  \geq  \int_{\Omega \setminus U_\eta}H(\mathcal E g,G) \, \de x + 
		\int_{(\Omega \setminus U_\eta) \cap J_g}h([g],\nu_g) \, \de \mathcal H^{N-1}  + \int_{\Omega \setminus U_\eta} H^\infty\left(\frac{d E^c g}{d |E^c g|} ,0\right) d |E^c g|.
\end{align*}
Eventually, by using Lemma \ref{lem- eu}, \eqref{etalimit}, and the fact that $(g,G) \in StBD^2(\Omega)$, in the limit $\eta \to 0$ we derive
$${\liminf_{\delta \to 0} {F}^{\rm dis}_{\delta} (u_\delta) \geq  \int_{\Omega }H(\mathcal E g,G) \, \de x + 
		\int_{\Omega  \cap J_g}h([g],\nu_g) \, \de \mathcal H^{N-1} + \int_{\Omega} H^\infty\left(\frac{d E^c g}{d |E^c g|} ,0\right) d |E^c g|  = I_{\rm lin}(g,G).} $$
 Here, we have also used that $H$ satisfies the upper bound in \eqref{JoE2.27} and that $h$ satisfies the upper bound in   {\rm \ref{(psi1)}}  (see Theorem \ref{thm_propdens}(i)).

(ii) We fix an arbitrary sequence $\lbrace\delta_n\rbrace$	with $\delta_n \to 0$ as $n \to \infty$. By Theorem \ref{representation} applied on $F^{\rm dis}$ (see  again \eqref{Elin}) we find a sequence $\{v_m\}\subset SBD^2(\Omega)$ with 
\begin{align}\label{limsup1}
\text{$v_m\wSD{*}(g,G)$ \ \  as $m \to \infty$  \ \ \   and \ \ \  $\lim_{m\to\infty} F^{\rm dis}(v_m,\Omega) = I_{\rm lin}(g,G)$}.
\end{align} 
By a density result in $SBD^2(\Omega)$, see \cite[Theorem 1.1]{crismale}, we can approximate each $v_m$ by a sequence $v_{m,k} \in SBV^2(\Omega;\mathbb{R}^N)$ such that each $J_{v_{m,k}}$ is closed and included in a finite union of closed connected pieces of $C^1$-hypersurfaces, $v_{m,k} \in C^\infty(\overline{\Omega} \setminus J_{v_{m,k}},\mathbb{R}^N)\cap W^{2,\infty}(\Omega \setminus J_{v_{m,k}};\mathbb{R}^N)$, and 
$$\lim_{k\to \infty} \Big( \Vert v_{m,k} - v_m \Vert_{L^1(\Omega)} +| Ev_{m,k} - Ev_m|(\Omega) + \Vert \mathcal E v_{m,k} - \mathcal E v_m \Vert_{L^2(\Omega)}   \Big) = 0. $$
In particular, by the fact that $W$ is a quadratic form and by the continuity of $\psi$ we get 
$$\lim_{k \to \infty} F^{\rm dis}(v_{m,k},\Omega)  = F^{\rm dis}(v_{m},\Omega)  \quad \text{for all $m \in \mathbb{N}$}. $$
By a diagonal argument, we can choose nondecreasing sequences $\lbrace m_n\rbrace$ and $\lbrace k_n\rbrace$ with $m_n, k_n \to \infty$ as $n\to \infty$ such that
\begin{align}\label{limsup2}
\lim_{n \to \infty} \Big( \Vert v_{m_n,k_n} - v_{m_n} \Vert_{L^1(\Omega)} + \Vert \mathcal E v_{m_n,k_n} - \mathcal E v_{m_n} \Vert_{L^2(\Omega)}   + \big| F^{\rm dis}(v_{m_n,k_n},\Omega) - F^{\rm dis}(v_{m_n},\Omega)\big| \Big) = 0, 
\end{align}
and 
\begin{align}\label{limsup3}
\Vert \nabla v_{m_n,k_n}  \Vert_{L^\infty(\Omega)} +  \Vert \nabla^2 v_{m_n,k_n}  \Vert_{L^\infty(\Omega)} \leq \delta_n^{-(1-\beta)/2}. 
\end{align}
Defining $u_n := v_{m_n,k_n} \in SBV^2_2(\Omega;\mathbb{R}^N)$, by \eqref{limsup1} and \eqref{limsup2} we get
$$\text{$u_n\wSD{*}(g,G)$ \ \  as $ n  \to \infty$  \ \ \   and \ \ \  $\lim_{n\to\infty} F^{\rm dis}(u_n,\Omega) = I_{\rm lin}(g,G)$}. $$
 To conclude, it suffices to show that $\lim_{n\to\infty} F^{\rm dis}(u_n,\Omega) = \lim_{n\to\infty} F_{\delta_n}^{\rm dis} (u_n)$. To this end, recalling the relation $y_n = {\rm id} + \delta_n u_n$ and comparing \eqref{F-en} with \eqref{Elin}, we need to check that 
 $$\lim_{n\to \infty} \int_\Omega \frac{1}{\delta_n^2} V({\rm Id} + \delta_n \nabla u_n) + \frac{1}{\delta_n^{2\beta}} \delta_n^2 |\nabla^2 u_n|^2 \, {\rm d}x   +  \frac{ 1}{\delta_n^{\beta}} \delta_n \int_{J_{\nabla u_n}}  \Psi \big( [\nabla u_n], \nu_{\nabla u_n}\big) \, {\rm d}\mathcal{H}^{N-1} = \lim_{n\to \infty} \int_{\Omega}   W( \mathcal E u_n)\, {\rm d}x.  $$
The second and the third term on the left-hand side vanish  by \eqref{limsup3}, {\rm \ref{(psi2)}} for $\Psi$, and   $\beta <1$.  For the first term, we argue as in the proof of \cite[Theorem 2.7]{Friedrich}: we use the Taylor expansion $V({\rm Id} + F) =  W( {\rm sym}(F)) + \omega(F)$ with $|\omega(F)|\leq C|F|^3$ for $|F| \leq 1$, and   compute by \eqref{limsup3} (recall that $W$ is a quadratic form)
\begin{align*} 
\lim_{n \to \infty}\frac{1}{\delta_n^2} \int_{\Omega} V({\rm Id} + \delta_n\nabla u_n) &  = \lim_{n\to \infty}  \int_{\Omega}  \Big( W( \mathcal E u_n) + \frac{1}{\delta_n^2} \omega(\delta_n \nabla u_n)  \Big) \, {\rm d}x  \\ & = \lim_{n\to \infty} \int_{\Omega}   W( \mathcal E u_n)\, {\rm d}x +   \lim_{n \to \infty}\int_{\Omega}    {\rm O}\big( \delta_n|\nabla u_n|^3 \big)=  \lim_{n\to \infty} \int_{\Omega}   W( \mathcal E u_n)\, {\rm d}x,
\end{align*}
where the last step follows from the fact that $\beta >\frac{1}{3}$. This concludes the proof.
\end{proof}

\appendix

\section{BV and BD fields}\label{BD}

 
We state some approximation results dealing with $BV$ fields which are used throughout the paper.

\begin{theorem}[{Alberti's Theorem \cite[Theorem~2.8]{CF1997}}]\label{Al}
Let $d \in \mathbb N$ and $G \in L^1(\Omega; \R{d{\times} N})$. 
Then there exist a function $f \in SBV(\Omega; \R d)$, a Borel function $\beta\colon\Omega\to\R{d{\times} N}$, and a constant $C_N>0$ depending only on $N$ such that
\begin{equation*}
Df = G \,{\cL}^N + \beta \cH^{N-1}\res J_f, \qquad
\int_{\Omega\cap J_f}  |\beta| \, \de \cH^{N-1} \leq C_N \lVert G\rVert_{L^1(\Omega)}.
\end{equation*}
\end{theorem}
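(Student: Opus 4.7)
The argument splits naturally in two: a reduction to the scalar case $d = 1$, followed by an explicit construction on a Caccioppoli partition and a sharp $L^1$-estimate on the jump measure.

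First, I would reduce to $d = 1$. Writing $G = (G^{(1)}, \ldots, G^{(d)})^\top$ with rows $G^{(i)} \in L^1(\Omega; \R{N})$, it suffices to produce, for each row, a scalar $f_i \in SBV(\Omega)$ with $Df_i = G^{(i)}\,\cL^N + \beta_i\,\cH^{N-1}\res J_{f_i}$ and $\int |\beta_i|\, \de \cH^{N-1} \leq C_N \|G^{(i)}\|_{L^1}$. Then $f := (f_1, \ldots, f_d)$ has the required structure, and the row-wise bounds sum to the full estimate (up to a factor absorbed into $C_N$).

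Second, for scalar $G \in L^1(\Omega; \R{N})$, approximate $G$ in $L^1$ by simple vector fields of the form $G_n = \sum_\alpha \xi^n_\alpha \chi_{E^n_\alpha}$, where $\{E^n_\alpha\}$ is a finite Caccioppoli partition of $\Omega$. For each such $G_n$ I would construct $f_n \in SBV(\Omega)$ explicitly as a piecewise affine function with slope $\xi^n_\alpha$ on each cell $E^n_\alpha$, centered at a point of the cell so as to keep the interface jumps small: this yields $\nabla f_n = G_n$ almost everywhere and $J_{f_n} \subseteq \bigcup_\alpha \partial^* E^n_\alpha$, with jump density on $\partial^* E^n_\alpha \cap \partial^* E^n_\beta$ controlled by $|\xi^n_\alpha - \xi^n_\beta|\cdot|x - x_\alpha^*|$. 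Granted the quantitative estimate below, $SBV$-compactness and weak-$*$ closure of the representation for the singular measures then provide the limiting $f$ as $n \to \infty$.

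The main obstacle is the dimensional estimate $\int_{J_{f_n}} |\beta_n|\, \de \cH^{N-1} \leq C_N \|G_n\|_{L^1}$, with $C_N$ independent of the partition and of $G$. A generic simple approximation gives a bound involving both the diameter of the cells and the total perimeter of $\{E^n_\alpha\}$, neither of which is a priori $L^1$-controlled. To obtain the universal $C_N$, the partition should be chosen as the collection of level sets of a suitable mollification of a scalar surrogate associated to $G$: the coarea formula then rewrites the total perimeter as an $L^1$-integral, while the relative isoperimetric inequality — combined with the careful centering of the affine pieces inside each cell, so that $|x - x^*_\alpha|$ is controlled by the cell's diameter — converts the interface contributions into matching $L^1$-bounds. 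This combined use of coarea and isoperimetry is the technical heart of the argument and is where the dimensional constant arises.
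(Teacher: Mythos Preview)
The paper does not prove this statement; it is quoted from \cite[Theorem~2.8]{CF1997} (ultimately Alberti's Lusin-type theorem for gradients) and used as a black box throughout. So there is no ``paper's proof'' to compare against, and the question is simply whether your sketch stands on its own.

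It does not, for two reasons. First, the compactness step is broken: from $f_n\in SBV$ with $\nabla f_n=G_n\to G$ in $L^1$ and $|D^s f_n|(\Omega)\le C$ you only get a weak-$*$ $BV$ limit $f$, and the limiting singular measure $\mu=\lim D^s f_n$ need not be of jump type. Ambrosio's $SBV$ compactness requires either a uniform bound on $\cH^{N-1}(J_{f_n})$ together with $\nabla f_n$ bounded in some $L^p$, $p>1$, or a superlinear control on the jump amplitudes --- neither of which your construction supplies. A Cantor part can and generically will appear in the limit, so ``weak-$*$ closure of the representation'' is exactly what fails.

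Second, the resolution you propose for the quantitative estimate is not a strategy but a wish. For scalar $f$ the field $G$ lives in $\R N$, and there is no ``scalar surrogate associated to $G$'' whose coarea decomposition produces a partition adapted to the \emph{direction} of $G$; coarea slices by level sets of a real-valued function and sees only one scalar, not a vector. Alberti's actual argument avoids both problems by a direct iterative construction: on each cube of a fine disjoint cover one builds, by an explicit laminate, a Lipschitz function vanishing on the boundary whose gradient equals the cube-average of $G$ on a set of measure $(1-\varepsilon)|Q|$, with $\|\nabla\cdot\|_\infty\le C_N|\xi|/\varepsilon$. Summing and iterating on the residual bad set produces $f$ \emph{constructively} in $SBV$ with the $C_N$ bound, and no limiting procedure --- hence no closure issue --- is needed. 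If you want to repair your outline, replace the ``approximate and pass to the limit'' scheme by this Lusin-type iteration.
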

\begin{lemma}[{\cite[Lemma~2.9]{CF1997}}]\label{ctap}
Let $d \in \mathbb N$ and $u \in BV(\Omega; \R d)$. Then there exist piecewise constant functions $\bar u_n\in SBV(\Omega;\R d)$  such that $\bar u_n \to u$ in $L^1(\Omega; \R d)$ and
\begin{equation*} 
|Du|(\Omega) = \lim_{n\to \infty}| D\bar u_n|(\Omega) = \lim_{n\to \infty} \int_{\Omega\cap J_{\bar u_n}} |[\bar u_n]|\; \de\cH^{N-1}.
\end{equation*}
\end{lemma}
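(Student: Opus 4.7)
The plan is a two-step construction: first approximate $u$ strictly in $BV$ by smooth functions, then approximate each smooth function by piecewise constant functions with converging total variation, and finally take a diagonal subsequence. By the Anzellotti--Giaquinta density theorem for $BV$ (see e.g.\ \cite[Theorem~3.9]{AFP}), there exist $v_k\in C^\infty(\Omega;\mathbb{R}^d)\cap W^{1,1}(\Omega;\mathbb{R}^d)$ with $v_k\to u$ in $L^1(\Omega;\mathbb{R}^d)$ and $|Dv_k|(\Omega)\to|Du|(\Omega)$, so it suffices to approximate each smooth $v_k$ by piecewise constant functions, strictly in $BV$.

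For a fixed $v_k$, I would discretize the range. For $n\in\mathbb{N}$, partition $\mathbb{R}^d$ into half-open cubes $\{Q_z\}_{z\in\mathbb{Z}^d}$ of side $1/n$ (applying a small generic translation, chosen by Sard's theorem, so that $v_k^{-1}(\partial Q_z)$ is $\mathcal{H}^{N-1}$-rectifiable of finite measure) and define $w_{k,n}(x)\coloneqq c_{Q_z}$, the center of the unique cube containing $v_k(x)$. Smoothness of $v_k$ ensures that each preimage $v_k^{-1}(Q_z)$ is an open set of finite perimeter, so $w_{k,n}$ is piecewise constant in $SBV(\Omega;\mathbb{R}^d)$, and the bound $\|w_{k,n}-v_k\|_{L^\infty}\leq \sqrt{d}/(2n)$ gives $L^1$-convergence.

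The decisive estimate is $|Dw_{k,n}|(\Omega)\to|Dv_k|(\Omega)$. The jump set of $w_{k,n}$ equals, up to $\mathcal{H}^{N-1}$-null sets, the preimage under $v_k$ of the lattice $(d-1)$-dimensional faces; on the preimage of a face orthogonal to $e_i$, the jump vector has magnitude $1/n$ pointing along $\pm e_i$. A componentwise scalar coarea computation shows that the naive axis-aligned lattice yields an anisotropic limit $\int_\Omega\sum_i|\partial_i v_k|\,dx$, which in general overestimates the Euclidean total variation $|Dv_k|(\Omega)=\int_\Omega|\nabla v_k|\,dx$. This mismatch between the lattice anisotropy and the isotropic Euclidean norm is the main technical obstacle. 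I would overcome it by averaging the lattice orientation over $SO(d)$ and using Fubini to extract a deterministic rotation along which the discrete total variation realizes the Euclidean limit; equivalently, one locally adapts the partition cell to approximate the Euclidean Wulff shape in regions where $\nabla v_k$ is nearly constant. Once this convergence is secured, a standard diagonal extraction $\bar u_n\coloneqq w_{k(n),n}$ yields $\bar u_n\to u$ in $L^1$ and $|D\bar u_n|(\Omega)\to|Du|(\Omega)$; the final equality with the jump integral is automatic because piecewise constant $SBV$ functions have $D\bar u_n=[\bar u_n]\otimes\nu_{\bar u_n}\,\mathcal{H}^{N-1}\res J_{\bar u_n}$ with vanishing absolutely continuous and Cantor parts, so $|D\bar u_n|(\Omega)=\int_{\Omega\cap J_{\bar u_n}}|[\bar u_n]|\,d\mathcal{H}^{N-1}$.
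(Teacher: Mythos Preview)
The paper does not supply a proof of this lemma; it is quoted verbatim from \cite[Lemma~2.9]{CF1997} and used as a black box.

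Your overall strategy (strict smooth approximation, then piecewise constant approximation of each smooth function, then diagonalization) is reasonable, and you correctly isolate the crux: discretizing the range of a smooth $v\colon\Omega\to\mathbb{R}^d$ with an axis-aligned cube lattice yields $\lim_n |Dw_{k,n}|(\Omega)=\sum_{i=1}^d\int_\Omega|\nabla v^i|\,dx$ rather than $\int_\Omega|\nabla v|_{\mathrm{Frob}}\,dx$. However, your proposed remedy does not close this gap. Rotating the range lattice by $R\in SO(d)$ changes the limit to $\sum_{i}\int_\Omega\bigl|(R\nabla v)_i\bigr|\,dx$ (Euclidean norm of the $i$-th row of $R\nabla v$), and averaging this over $R$ gives $d\int_\Omega\int_{S^{d-1}}|\xi^{T}\nabla v(x)|\,d\sigma(\xi)\,dx$; this spherical integral depends on \emph{all} singular values of $\nabla v(x)$, not only on the Frobenius norm, so it is not a constant multiple of $\int|\nabla v|_{\mathrm{Frob}}$ unless $\nabla v$ is rank one almost everywhere. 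The Wulff-shape alternative fails for the same reason.

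The obstruction is in fact structural. For piecewise constant $w$ the measure $Dw$ is rank one, whence $\int_{S^{d-1}}|D(w\cdot\xi)|(\Omega)\,d\sigma(\xi)=c_d\,|Dw|(\Omega)$ with $c_d=\int_{S^{d-1}}|\xi_1|\,d\sigma$. For any sequence $w_n\to v$ in $L^1$, Fatou and scalar lower semicontinuity give $c_d\liminf_n|Dw_n|(\Omega)\ge\int_{S^{d-1}}|D(v\cdot\xi)|(\Omega)\,d\sigma(\xi)$. Taking $d=N=2$ and $v(x)=x$ one finds $\liminf_n|Dw_n|(\Omega)\ge(\pi/2)\,|\Omega|>\sqrt{2}\,|\Omega|=|Dv|(\Omega)$, so piecewise constant functions are \emph{not} strictly dense in $BV(\Omega;\mathbb{R}^d)$ for $d\ge2$ with the Frobenius total variation. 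This suggests that the statement should be read either with a componentwise norm convention (in which case the scalar coarea argument works directly and your rotation step is unnecessary) or with an inequality $\limsup_n|D\bar u_n|(\Omega)\le C\,|Du|(\Omega)$ in place of equality---which is in fact all that the applications in the present paper actually use.
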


\begin{theorem}[{\cite[Theorem~1.2]{Silhavy}}]\label{silhavy_app}
 Let  $(g,G)\in BV(\Omega;\mathbb R^N)\times L^1(\Omega;\mathbb R^{N\times N})$. Then,  there exists a sequence $\{g_n\} \subset SBV(\Omega;\mathbb R^N)$ such that
$g_n \wsto g$ in $BV(\Omega;\mathbb R^N)$ and $\nabla g_n =G$.
\end{theorem}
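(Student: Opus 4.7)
The plan is to decouple the two requirements of the statement (that the approximants lie in $SBV$ with a prescribed absolutely continuous gradient, and that they weakly$^*$ approximate $g$) by a clean algebraic splitting. The observation is that, once an auxiliary $SBV$ function carrying the gradient $G$ is fixed, any further adjustment to approach $g$ may be performed inside the class of functions with vanishing approximate gradient, i.e., piecewise constant functions. Both building blocks are already available: Alberti's theorem (Theorem~\ref{Al}) to realize $G$ as $\nabla f$ for some $f\in SBV(\Omega;\mathbb R^N)$, and the piecewise-constant approximation of Lemma~\ref{ctap} to handle the residual $g-f\in BV(\Omega;\mathbb R^N)$.

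Concretely, I would first apply Theorem~\ref{Al} to produce $f\in SBV(\Omega;\mathbb R^N)$ together with a Borel field $\beta$ such that
\[
Df=G\,\mathcal L^N+\beta\,\mathcal H^{N-1}\lfloor J_f,\qquad \int_{\Omega\cap J_f}|\beta|\,\de\mathcal H^{N-1}\le C_N\|G\|_{L^1(\Omega)}.
\]
In particular, $\nabla f=G$ a.e.\ and $|Df|(\Omega)<\infty$. Since $g-f\in BV(\Omega;\mathbb R^N)$, I would then invoke Lemma~\ref{ctap} to obtain piecewise constant functions $\bar u_n\in SBV(\Omega;\mathbb R^N)$ with $\bar u_n\to g-f$ in $L^1(\Omega;\mathbb R^N)$ and $|D\bar u_n|(\Omega)\to|D(g-f)|(\Omega)$. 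Being piecewise constant, $\nabla\bar u_n\equiv 0$. Setting $g_n\coloneqq f+\bar u_n$, the construction is complete: $g_n\in SBV(\Omega;\mathbb R^N)$, and $\nabla g_n=\nabla f+\nabla\bar u_n=G$ a.e.

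It remains only to check that $g_n\wsto g$ in $BV$. Strong $L^1$-convergence is immediate from $g_n-g=\bar u_n-(g-f)\to 0$ in $L^1$. For the boundedness of $|Dg_n|(\Omega)$, one notes that $|Df|(\Omega)$ is finite by Alberti and that the sequence $|D\bar u_n|(\Omega)$ is bounded because it converges; thus $|Dg_n|(\Omega)\le|Df|(\Omega)+|D\bar u_n|(\Omega)$ is uniformly bounded, which combined with the $L^1$-convergence yields the claimed weak$^*$ convergence. I do not foresee a genuine analytic obstacle here; the only subtlety is conceptual, namely recognizing that one should not try to correct a smooth mollification of $g$ by Alberti (in which case the Alberti-correction has no reason to be $L^1$-small), but instead let Alberti fix the absolutely continuous part once and for all and perform the $BV$-approximation in the gradient-free class of piecewise constant maps.
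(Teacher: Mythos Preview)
Your proof is correct. The paper does not prove this statement itself---it is quoted from \cite{Silhavy} without proof---but your argument is precisely the strategy the paper employs to prove the closely related Theorems~\ref{appTHMh} and~\ref{StSBDappthm} (Alberti to fix the absolutely continuous gradient, then piecewise-constant approximation to handle the residual), so you have in effect rediscovered the paper's own method for this circle of results.
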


We also recall some facts about functions of bounded deformation,  referring to   \cite{ACDM, BFT, BCDM, T, TS} for more details. A function $u\in L^{1}(\Omega;{\mathbb{R}}^{N})$ is said to be of \emph{bounded deformation},  written $u\in BD(\Omega)$, if the symmetric part  $ Eu : = \frac{Du + Du^T}{2}$ of its distributional derivative $Du$, is a matrix-valued bounded Radon measure. 
The space $BD(\Omega)$ is a Banach space when endowed with the norm 
\begin{equation*}
\|u\|_{BD(\Omega)} = \|u\|_{L^1(\Omega)} + | E u|(\Omega ).
\end{equation*}
The strict convergence in the space $BD(\Omega)$ is the one determined by the distance
$$d(u,v) := \|u-v\|_{L^1(\Omega)} + \big| | Eu|(\Omega) - | Ev |(\Omega)\big| \quad \text{for } u, v \in BD(\Omega).$$
Hence, a sequence $\{u_n\} \subset BD(\Omega)$ converges to a function $u \in BD(\Omega)$ with respect to this topology if and only if $u_n \to u$ in $L^1(\Omega;\mathbb R^N)$, $ Eu_n \wsto  Eu$ in the sense of measures, and
$| Eu_n|(\Omega) \to | Eu|(\Omega)$. 

Recall that, if $u_n \to u$ in $L^1(\Omega;\mathbb R^N)$ and there exists $C > 0$ such that $|Eu_n|(\Omega) \leq C$ for all  $n \in \mathbb N$, then $u \in BD(\Omega)$ and 
\begin{equation*}
| Eu|(\Omega) \leq \liminf_{n\to +\infty}| Eu_n|(\Omega).
\end{equation*}
The following result  (see    \cite{T}, \cite[Theorem 2.6]{BFT}, and {\cite[Lemma 4.15]{BDG}} for the area-strict version) shows that it is possible to approximate any $u \in BD(\Omega)$  by a sequence of smooth functions.

\begin{proposition} 
\label{prop:densityuptoboundary}
     Let $u\in BD(\Omega)$. Then, there exists a sequence $\lbrace u_j\rbrace\subset C^\infty(\overline{\Omega};\mathbb R^N )$ such that $u_j$ converges to $u$ in $L^1(\Omega;\mathbb R^N)$  and
$\int_\Omega|\mathcal E u_n| \,  {\rm d}x  \to |E u|(\Omega). $ 
\end{proposition}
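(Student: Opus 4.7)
By $L^1$-lower semicontinuity of the total variation of the distributional symmetric gradient, it is enough to produce $\{u_n\}\subset C^\infty(\overline\Omega;\mathbb R^N)$ with $u_n\to u$ in $L^1(\Omega;\mathbb R^N)$ and
$$\limsup_{n\to\infty}\int_\Omega|\mathcal E u_n|\,\de x\leq|Eu|(\Omega).$$
The approach is the classical Anzellotti--Giaquinta--Temam scheme adapted to $BD$: a plain interior mollification is not smooth up to $\overline\Omega$, and convolving the zero-extension of $u$ would create spurious mass on $\partial\Omega$, so the remedy is to translate $u$ slightly inward along uniform cone directions before mollifying at a strictly finer scale.

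First, I would use the Lipschitz regularity of $\partial\Omega$ to build a finite open cover $V_0,\ldots,V_k$ of $\overline\Omega$ with $V_0\Subset\Omega$ such that each $V_i$ (for $i\geq 1$) admits a direction $\nu_i\in\mathbb S^{N-1}$ and constants $c,s_0>0$ satisfying
$$x+s\nu_i+B(0,cs)\subset\Omega\qquad\text{for all }x\in V_i\cap\overline\Omega,\ s\in(0,s_0].$$
Pick a subordinate partition of unity $\{\varphi_i\}\subset C_c^\infty(\mathbb R^N)$ with $\sum_i\varphi_i\leq 1$ globally and $\sum_i\varphi_i\equiv 1$ on a neighborhood of $\overline\Omega$ (so $\sum_i\nabla\varphi_i\equiv 0$ there). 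Next choose scales $\varepsilon_n,t_n\to 0^+$ with $\varepsilon_n<ct_n<cs_0$ and, for a standard mollifier $\rho_\varepsilon$, set
$$u_i^n(x):=(\rho_{\varepsilon_n}*u)(x+t_n\nu_i)\text{ for }x\in V_i,\ i\geq 1,\qquad u_0^n:=\rho_{\varepsilon_n}*u\text{ on }V_0,$$
each being smooth in a neighborhood of $V_i\cap\overline\Omega$ by the cone property. The glued sequence $u_n:=\sum_i\varphi_i u_i^n\in C^\infty(\overline\Omega;\mathbb R^N)$ converges to $u$ in $L^1$ by $L^1$-continuity of translations combined with the standard $L^1$-convergence of mollifications.

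For the total variation estimate I would exploit $\sum_i\nabla\varphi_i\equiv 0$ near $\overline\Omega$ to write
$$\mathcal E u_n=\sum_i\varphi_i\,\mathcal E u_i^n+\sum_i\nabla\varphi_i\odot(u_i^n-u),$$
whose second term vanishes in $L^1$ as $n\to\infty$. For the first term, the commutation identity $\mathcal E(\rho_{\varepsilon_n}*u)=\rho_{\varepsilon_n}*Eu$ (a smooth function, obtained by convolving the matrix-valued measure $Eu$ with the kernel, valid wherever the kernel fits inside $\Omega$) together with a change of variables yields
$$\int_\Omega\sum_i\varphi_i(x)|\mathcal E u_i^n(x)|\,\de x\leq\int_{\mathbb R^N}\Big(\sum_i\varphi_i(y-t_n\nu_i)\Big)|(\rho_{\varepsilon_n}*Eu)(y)|\,\de y\leq(1+Ct_n)|Eu|(\Omega),$$
where Lipschitzness of the $\varphi_i$ gives $\sum_i\varphi_i(y-t_n\nu_i)\leq 1+Ct_n$ with $C:=\sum_i\|\nabla\varphi_i\|_\infty$, and Young's inequality gives $\|\rho_{\varepsilon_n}*Eu\|_{L^1(\mathbb R^N)}\leq|Eu|(\Omega)$. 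Passing to the $\limsup$ then delivers the required bound.

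The only genuinely delicate point is the first step, where the Lipschitz regularity of $\partial\Omega$ is used to extract uniform inward cone directions $\nu_i$ guaranteeing that mollification at scale $\varepsilon_n\ll t_n$ remains smooth on $\overline\Omega$. Everything thereafter is a formal manipulation built on the two identities $\sum_i\nabla\varphi_i\equiv 0$ (which absorbs the commutator term) and $\mathcal E(\rho_\varepsilon*\cdot)=\rho_\varepsilon*E(\cdot)$, together with the standard mollifier inequality $\|\rho_\varepsilon*\mu\|_{L^1(\mathbb R^N)}\leq|\mu|(\mathbb R^N)$ applied to $\mu=Eu$.
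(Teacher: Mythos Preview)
The paper does not supply its own proof of this proposition; it merely cites \cite{T}, \cite[Theorem~2.6]{BFT}, and \cite[Lemma~4.15]{BDG}. Your argument is a correct and complete rendering of the classical Anzellotti--Giaquinta--Temam scheme that underlies those references: the inward translation along cone directions, the partition-of-unity gluing, the commutator identity $\mathcal E u_n=\sum_i\varphi_i\mathcal E u_i^n+\sum_i\nabla\varphi_i\odot(u_i^n-u)$, and the Young-inequality bound on the mollified measure are exactly the standard steps. There is nothing to correct.
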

\color{black}

\color{black}


By the Lebesgue Decomposition Theorem, $Eu$ can be split into the sum of two
mutually singular measures, namely an absolutely continuous
part $\mathcal E u$ and a singular part $E^s u$ with respect to the Lebesgue measure $\mathcal{L}^N$.   More precisely,  if $u \in BD(\Omega )$, then $J_u$ is countably $(N-1)$-rectifiable, see \cite{ACDM}, 
and the following decomposition holds 
\begin{equation*}
Eu= \mathcal E u \mathcal{L}^N \lfloor \Omega + [u] \odot \nu_u {\mathcal{H}}^{N-1}\lfloor J_u +  E^cu,
\end{equation*}
\noindent where $[u] = u^+ - u^-$, $u^{\pm}$ are the traces of $u$ on the sides of $J_u$ determined by the unit normal $\nu_u$ to $J_u$, and $\mathcal{E}^cu$ is the Cantor part of the measure $Eu$ which vanishes on  Borel sets $B$ with 
$\mathcal H^{N-1}(B) < + \infty.$  If $E^cu \BBB \equiv 0 \EEE$, we say $u \in SBD(\Omega)$. The following result holds (cf. \cite[Theorem 4.3]{ACDM} and \cite[Theorem 2.5]{E2}).  

\begin{theorem}
(Approximate Symmetric Differentiability)\label{approxsymdiff} 
If $u \in BD(\Omega),$ then for $\mathcal{L}^N$-a.e.\ $x \in\Omega$,
there exists an $N\times N$ matrix $\nabla u(x)$ such that
\begin{equation}\label{apdifBD}
\lim_{\varepsilon \rightarrow 0^+} \frac{1}{\varepsilon^{N+1} }
\int_{B_\varepsilon(x)} |u(y) - u(x) - \nabla u(x)(y-x)|\, {\rm d}y  = 0,
\end{equation}
\begin{equation}\label{apsymdif}
\lim_{\varepsilon \rightarrow 0^+} \frac{1}{\varepsilon^{N} }
\int_{B_\varepsilon(x)} \frac{|\langle u(y) - u(x) - \mathcal E u(x)(y-x), y-x\rangle|}{|y-x|^2}\, {\rm d}y  = 0,
\end{equation}
for $\mathcal L^N$- a.e. $x \in \Omega$.

\end{theorem}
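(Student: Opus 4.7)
My plan is to prove Theorem \ref{approxsymdiff} via a Calder\'on--Zygmund-type decomposition for $BD$ functions, following the strategy of \cite{ACDM}. The key idea is that although $BD$ functions are not classically differentiable $\cL^N$-a.e., at "good" points they locally coincide with a Lipschitz function, and Rademacher's theorem then supplies an approximate differential. The two properties \eqref{apdifBD} and \eqref{apsymdif} will be obtained in parallel, with \eqref{apsymdif} requiring only information about the symmetric part.

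First I would construct, for each $\lambda>0$, a Lipschitz truncation $v_\lambda\colon\Omega\to\R{N}$ together with an exceptional set $F_\lambda\subset\Omega$ satisfying $\norma{\nabla v_\lambda}_{L^\infty}\leq C\lambda$, $\cL^N(F_\lambda)\leq C\lambda^{-1}|Eu|(\Omega)$, $u=v_\lambda$ on $\Omega\setminus F_\lambda$, and $\mathcal E v_\lambda=\mathcal E u$ $\cL^N$-a.e.\ on $\Omega\setminus F_\lambda$. This is done by defining $F_\lambda$ as the set where the maximal function of $|Eu|$ (on cubes) exceeds $\lambda$ and extending $u|_{\Omega\setminus F_\lambda}$ via a Whitney-type construction. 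The Poincar\'e--Korn inequality in $BD$ on each Whitney cube $Q$ gives $\Vert u - r_Q\Vert_{L^1(Q)}\leq C\diam(Q)|Eu|(Q)$ for an affine infinitesimal rigid motion $r_Q$, and these local rigid motions are then glued through a partition of unity, producing a Lipschitz extension with the claimed bound on $\nabla v_\lambda$.

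Next I would invoke Rademacher's theorem: at $\cL^N$-a.e.\ point $x$, each $v_\lambda$ has a classical derivative, hence an approximate differential. Taking a countable sequence $\lambda_j\to\infty$, the set $\bigcup_j(\Omega\setminus F_{\lambda_j})$ has full measure. At $\cL^N$-a.e.\ $x$, $x$ is a point of Lebesgue density $1$ for some $\Omega\setminus F_{\lambda_j}$, so one defines $\nabla u(x):=\nabla v_{\lambda_j}(x)$. The density-$1$ condition and Lipschitzness of $v_{\lambda_j}$ together yield \eqref{apdifBD}. Since $\mathcal E v_{\lambda_j}=\mathcal E u$ on $\Omega\setminus F_{\lambda_j}$, at a joint Lebesgue point of $\mathcal E u$ the symmetric part $\sym(\nabla u(x))$ coincides with $\mathcal E u(x)$.

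For \eqref{apsymdif}, which is weaker than \eqref{apdifBD} and only probes the symmetric part, I would work directly at a Lebesgue point $x$ of $\mathcal E u$ at which $\eps^{-N}|E^s u|(B_\eps(x))\to 0$; such $x$ form a set of full measure. Writing the integrand in \eqref{apsymdif} as a difference quotient obtained by integrating $Eu$ against the radial field $y\mapsto (y-x)/|y-x|$, only the symmetric part of $Eu$ contributes to the pairing $\langle\cdot\,,y-x\rangle/|y-x|^2$, so the absolutely continuous Lebesgue term gives $\mathcal E u(x)$ and the singular term vanishes in the limit, yielding \eqref{apsymdif}. The main obstacle is Step~1: constructing a Whitney-type Lipschitz truncation in $BD$ is substantially more delicate than in $BV$ because Poincar\'e-type inequalities require subtracting infinitesimal rigid motions, not constants, and the skew-symmetric pieces from overlapping cubes must be reconciled by a Korn-compatible gluing procedure. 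This Korn-sensitive maximal function argument is the technical heart of the proof and is where $BD$ differs genuinely from $BV$.
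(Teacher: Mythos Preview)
The paper does not supply its own proof of this theorem: it is stated in the appendix with the attribution ``cf.\ \cite[Theorem 4.3]{ACDM} and \cite[Theorem 2.5]{E2}'' and is used as a black box in the proof of Proposition~\ref{GMthmHSD}. Your sketch follows precisely the Calder\'on--Zygmund/Lipschitz-truncation strategy of \cite{ACDM}, so you are reconstructing exactly the argument the paper defers to; in that sense your approach and the paper's coincide.
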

From \eqref{apdifBD} and \eqref{apsymdif} it follows that
$  \mathcal E u=\frac{\nabla u+\nabla u^T}{2}$.  There is a linear and surjective trace operator ${\rm tr}   \colon BD(\Omega) \to L^1(\partial \Omega,\mathbb{R}^N)$ which satisfies the following continuity property proved in  \cite[Lemma 3.4]{Ba}.

\begin{proposition}\label{traceB}
Let $u \in BD(\Omega)$ and $\lbrace u_k\rbrace \subset BD(\Omega)$ be such that $u_k \to u$ strictly  in $BD(\Omega)$. Then,
 ${\rm tr} \,u_k  \to {\rm tr} \, u$ strongly in $L^1(\partial \Omega; \mathbb R ^N)$.  
\end{proposition}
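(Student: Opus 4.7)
The plan is to reduce the continuity of the trace operator under strict convergence to a localized Gagliardo-type trace inequality for $BD$, after which the conclusion will follow from standard properties of weak-star convergence of positive measures. Setting $\Omega_\eps := \{ x \in \Omega : \mathrm{dist}(x, \partial\Omega) < \eps \}$, the key analytic tool I would invoke is the existence of $\eps_0 > 0$ and a constant $C = C(\Omega) > 0$ such that
\begin{equation}\label{eq:planbdtrace}
\| {\rm tr}\, u \|_{L^1(\partial \Omega; \mathbb{R}^N)} \leq \frac{C}{\eps} \| u \|_{L^1(\Omega_\eps; \mathbb{R}^N)} + C |Eu|(\Omega_\eps)
\end{equation}
for every $u \in BD(\Omega)$ and every $\eps \in (0, \eps_0]$. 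This is the $BD$-counterpart of the well-known localized $BV$ trace inequality and can be established by a finite Lipschitz atlas of $\partial\Omega$ reducing matters to a flat half-strip, on which one tests a suitable symmetric-tensor field supported in a normal collar against the $BD$ Green identity.

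With \eqref{eq:planbdtrace} in hand, the linearity of the trace yields
\begin{equation*}
\| {\rm tr}\,u_k - {\rm tr}\,u \|_{L^1(\partial\Omega)} \leq \frac{C}{\eps} \| u_k - u \|_{L^1(\Omega_\eps)} + C |E(u_k-u)|(\Omega_\eps).
\end{equation*}
For fixed $\eps$, the first summand vanishes as $k \to \infty$ by $L^1$-convergence. For the second, strict convergence combined with $Eu_k \overset{*}{\rightharpoonup} Eu$ in $\mathcal{M}(\Omega;\mathbb{R}^{N\times N}_{\rm sym})$ upgrades to the weak-star convergence $|Eu_k| \overset{*}{\rightharpoonup} |Eu|$ as positive Radon measures on $\Omega$. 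Since $|Eu|$ is finite, at most countably many $\eps \in (0,\eps_0]$ can satisfy $|Eu|(\{x : \mathrm{dist}(x,\partial\Omega) = \eps\}) > 0$; for any other $\eps$ one has $|Eu_k|(\Omega_\eps) \to |Eu|(\Omega_\eps)$, whence
\begin{equation*}
\limsup_{k\to\infty} |E(u_k - u)|(\Omega_\eps) \leq \limsup_{k\to\infty} \big( |Eu_k|(\Omega_\eps) + |Eu|(\Omega_\eps) \big) = 2 |Eu|(\Omega_\eps).
\end{equation*}
Thus $\limsup_k \| {\rm tr}\,u_k - {\rm tr}\,u \|_{L^1(\partial\Omega)} \leq 2C |Eu|(\Omega_\eps)$. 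Letting $\eps \to 0^+$ along such a full-measure sequence, and using $|Eu|(\Omega_\eps) \to 0$ (since $\bigcap_{\eps>0} \Omega_\eps = \emptyset$ and $|Eu|$ is a finite measure), concludes the argument.

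The main obstacle in executing this plan is the verification of the tangential part of \eqref{eq:planbdtrace}: unlike the $BV$ setting, where each partial derivative of $u$ is a component of $Du$ and the strip inequality reduces to a one-line slicing argument along the normal direction, in $BD$ only the symmetrized combinations $\tfrac{1}{2}(\partial_i u_j + \partial_j u_i)$ are directly controlled by $Eu$. Consequently, the test-field construction in the flat half-strip model must be supplemented by an additional tangential integration by parts to eliminate the uncontrolled contribution of $\partial_i u_N$, with careful bookkeeping to ensure that no spurious boundary terms are introduced and that the final constant in \eqref{eq:planbdtrace} depends only on the Lipschitz chart geometry and the normal cutoff, not on the specific test field used in duality.
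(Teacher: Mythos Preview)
The paper does not prove Proposition~\ref{traceB}; it merely records the statement and cites \cite[Lemma~3.4]{Ba} for the proof. Your outline is therefore not being compared against an in-paper argument but against the cited reference.

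That said, your strategy is precisely the one underlying Babadjian's proof: a localized Gagliardo-type trace inequality of the form \eqref{eq:planbdtrace}, followed by the standard strict-convergence argument on the collar $\Omega_\eps$. The deduction of $L^1(\partial\Omega)$-convergence from \eqref{eq:planbdtrace} is correctly carried out, including the use of $|Eu_k| \overset{*}{\rightharpoonup} |Eu|$ on $\Omega$ (a consequence of strict convergence via the Reshetnyak-type upgrade) and the selection of $\eps$ avoiding the at most countable set where $|Eu|$ charges the level set of the distance function. You have also correctly identified the genuine $BD$-specific obstacle in proving \eqref{eq:planbdtrace}: the normal derivative of the tangential components of $u$ is not directly controlled by $Eu$, so the flat-strip model requires an additional tangential integration by parts (or, equivalently, a careful choice of a symmetric tensor-valued test field) to close the estimate. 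This is exactly the issue that distinguishes the $BD$ trace theory from its $BV$ counterpart and is handled in \cite{Ba} along the lines you describe. Your plan is sound; only the execution of \eqref{eq:planbdtrace} remains, and for that you may either carry out the test-field construction you sketch or simply invoke \cite[Lemma~3.4]{Ba} as the paper does.
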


We continue with a version of the approximation result in Theorem \ref{appTHMh} for $SBD$-functions.

\begin{theorem}\label{StSBDappthm}
Let  $(g,G) \in  StSBD^p(\Omega)$. Then, there exists $\lbrace u_n \rbrace  \subset  SBD(\Omega)$ such that $u_n \to g$ in $L^1(\Omega;\mathbb{R}^N)$, $\mathcal {E}u_n  =   G$, $E u_n \wsto Eg$, and $|E u_n|(\Omega)\leq C ( |E g|(\Omega)+ \|G\|_{L^1(\Omega)})$. 
\end{theorem}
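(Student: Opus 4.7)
The plan is to make rigorous the direct construction outlined in Remark \ref{remStBDconv}, which exploits the fact that in the $SBD$ setting we already have $\mathcal{E}g$ available as an $L^1$-function, so only the mismatch $G - \mathcal{E}g$ needs to be compensated.

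First, I would apply Alberti's Theorem (Theorem \ref{Al}) to the function $G - \mathcal{E}g \in L^1(\Omega;\mathbb{R}^{N\times N})$ to produce a function $h \in SBV(\Omega;\mathbb{R}^N)$ whose approximate gradient satisfies $\nabla h = G - \mathcal{E}g$ and whose jump contribution is controlled by $C_N \|G - \mathcal{E}g\|_{L^1(\Omega)}$. In particular,
\begin{equation*}
|Dh|(\Omega) \leq \|G - \mathcal{E}g\|_{L^1(\Omega)} + C_N \|G - \mathcal{E}g\|_{L^1(\Omega)} \leq C(N)\big(\|G\|_{L^1(\Omega)} + |Eg|(\Omega)\big).
\end{equation*}
Next, by Lemma \ref{ctap} applied to $h$, there exists a sequence $\{h_n\} \subset SBV(\Omega;\mathbb{R}^N)$ of piecewise constant functions with $h_n \to h$ in $L^1(\Omega;\mathbb{R}^N)$ and $|Dh_n|(\Omega) \to |Dh|(\Omega)$. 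Since each $h_n$ is piecewise constant, $\nabla h_n = 0$ and $Dh_n$ is concentrated on the jump set, so in particular $\mathcal{E}h_n = 0$ and $|Eh_n|(\Omega) \leq |Dh_n|(\Omega)$.

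The candidate approximating sequence is then $u_n := g + h - h_n$. Since $g \in SBD(\Omega)$ and $h, h_n \in SBV(\Omega;\mathbb{R}^N) \subset SBD(\Omega)$, we have $u_n \in SBD(\Omega)$. The $L^1$ convergence $u_n \to g$ follows from $h_n \to h$ in $L^1$. Computing the symmetric gradient,
\begin{equation*}
\mathcal{E}u_n = \mathcal{E}g + \mathcal{E}h - \mathcal{E}h_n = \mathcal{E}g + (G - \mathcal{E}g) - 0 = G,
\end{equation*}
so the identity $\mathcal{E}u_n = G$ is automatic. For the weak* convergence $Eu_n \wsto Eg$, observe that $E(h - h_n) \wsto 0$ because $h_n \to h$ in $L^1$ and $|Dh_n|(\Omega) \to |Dh|(\Omega)$ yields $|D(h-h_n)|(\Omega) \to 0$ (hence also $|E(h-h_n)|(\Omega) \to 0$).

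Finally, for the total variation bound, the triangle inequality gives
\begin{equation*}
|Eu_n|(\Omega) \leq |Eg|(\Omega) + |Eh|(\Omega) + |Eh_n|(\Omega) \leq |Eg|(\Omega) + |Dh|(\Omega) + |Dh_n|(\Omega).
\end{equation*}
Since $|Dh_n|(\Omega) \to |Dh|(\Omega)$, for $n$ large we have $|Dh_n|(\Omega) \leq 2|Dh|(\Omega)$, and combined with the Alberti estimate this yields $|Eu_n|(\Omega) \leq C(N)(|Eg|(\Omega) + \|G\|_{L^1(\Omega)})$ as required; discarding the initial (finitely many) terms of the sequence does not affect the validity of the other properties. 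There is no real obstacle here — the only mild care needed is to note that piecewise constant functions have vanishing $\mathcal{E}$, which is what makes the case $g \in SBD$ genuinely simpler than the general $BD$ case (no diagonal sequence argument is required).
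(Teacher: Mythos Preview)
Your approach is essentially the same as the paper's: apply Alberti's Theorem to $G-\mathcal E g$, approximate the resulting $h$ by piecewise constants $h_n$, and set $u_n=g+h-h_n$. The paper proceeds exactly this way.

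There is, however, one genuinely incorrect step in your justification of $Eu_n\wsto Eg$. You claim that $h_n\to h$ in $L^1$ together with $|Dh_n|(\Omega)\to|Dh|(\Omega)$ yields $|D(h-h_n)|(\Omega)\to 0$. This is false: strict convergence in $BV$ does not imply norm convergence. Concretely, the absolutely continuous part of $Dh$ and the purely jump measures $Dh_n$ are mutually singular, so in fact $|D(h-h_n)|(\Omega)=|Dh|(\Omega)+|Dh_n|(\Omega)\to 2|Dh|(\Omega)$, not $0$. The fix is simpler than what you wrote: since $h-h_n\to 0$ in $L^1$ and $|E(h-h_n)|(\Omega)\leq |Eh|(\Omega)+|Dh_n|(\Omega)$ is bounded, standard lower semicontinuity/compactness in $BD$ (or even $BV$) gives $E(h-h_n)\wsto 0$ directly, hence $Eu_n=Eg+E(h-h_n)\wsto Eg$. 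This is essentially how the paper argues (it phrases it as $Eh_n\wsto Eh$ from $L^1$ convergence plus boundedness of $|Eh_n|(\Omega)$). With this correction, your proof is complete.
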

 
\begin{proof}
The proof follows the same arguments as in the $SBV$ case, by applying Alberti's Theorem and a piecewise constant approximation of $L^1$ fields. Indeed, define $u_n= g + h - h_n$,
where by Alberti's Theorem $h \in SBV(\Omega; \mathbb{R}^{N\times N})$,  is such that
\begin{align}\label{817bis} 
\nabla h =  \mathcal{E}h = G - \mathcal{E}g, \quad \quad |Dh|(\Omega) \leq C \Vert G \Vert_{L^1(\Omega)}  + C \Vert \mathcal{E}g \Vert_{L^1(\Omega)}. 
\end{align}
Then, there exists a piecewise constant sequence $\lbrace h_n \rbrace$ with $h_n \to h$ in  $L^1(\Omega; \mathbb{R}^N)$  and $|Dh_n|(\Omega) \to |Dh|(\Omega).$  As  $\Omega$ is regular, we can extend $h$ outside $\Omega$ such that $|D h|(\partial \Omega)=0$ (see \cite[Theorem 2.16]{G}).  
Clearly, also $| Eh|(\partial \Omega) = 0.$
Then, since $|Dh_n|(\Omega)$ is bounded, we have that $| E h_n|(\Omega)$ is also bounded and therefore $ Eh_n \wsto  Eh.$
We conclude that
\begin{align}\label{Ehn*}| Eh|(\Omega) \leq \liminf_{n \to +\infty} | E h_n|(\Omega) \leq\limsup_{n\to +\infty} | Eh_n|(\Omega)\leq | Eh|(\Omega).
\end{align}
It is easily seen that $u_n \to g$ strongly in $L^1(\Omega;\mathbb R^{N})$ and $\mathcal Eu_n= \mathcal Eg+ \mathcal Eh= G$. Taking into account the definition of $u_n$, we have
$E u_n= E g+ Eh- E h_n$, thus the weak* convergence is immediately obtained. Concerning  the bound, we have
\begin{align*}
|E u_n|(\Omega)\leq |E g|(\Omega)+ |E h|(\Omega)+ |E h_n|(\Omega).
\end{align*}
 Up  to the choice of a suitable tail of $\{u_n\}$, this  gives the estimate, in view of \eqref{Ehn*} and \eqref{817bis}.
\end{proof}


   The orthogonal projection operator $P \colon BD(\Omega)\to \mathcal R$ \color{black} belongs to the class considered in the following Poincar\'e-Friedrichs
type inequality for $BD$ functions (see \cite{ACDM}, \cite{K}, and \cite{T}).

\begin{theorem}\label{Thm2.8BFT}
Let $\Omega$ be a bounded, connected, open subset of $\mathbb R^N$, with Lipschitz boundary, and let $R \colon BD(\Omega)\to \mathcal R$ be a continuous linear map which leaves the elements of $\mathcal R$ fixed. Then there exists a constant 
$C(\Omega, R)$ such that
\begin{align*}
\int_\Omega |u(x) - R(u)(x)| \, {\rm d}x \leq C(\Omega,R) \, | E u|(\Omega) \  \;  \mbox{ for every } u\in BD(\Omega).
\end{align*}
\end{theorem}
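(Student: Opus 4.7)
The plan is to reduce the general inequality to the special case where $R$ is the orthogonal projection $P_0$ onto $\mathcal R$ (say, with respect to the $L^2$ inner product, using that $\mathcal R$ is finite-dimensional and closed in $L^2(\Omega;\mathbb R^N)$). For this special case one has the classical Korn--Poincar\'e inequality in $BD$,
$$\|u - P_0 u\|_{L^1(\Omega)} \leq C_0(\Omega)\,|Eu|(\Omega) \qquad \text{for every } u\in BD(\Omega),$$
and once this is established the general statement for $R$ follows by a short functional-analytic argument that crucially uses the hypothesis that $R$ fixes $\mathcal R$ pointwise.

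For the special case I would argue by contradiction. Assuming the inequality fails for $P_0$, there is a sequence $\{u_n\}\subset BD(\Omega)$ with $\|u_n - P_0 u_n\|_{L^1(\Omega)} = 1$ and $|Eu_n|(\Omega) \to 0$. The normalized differences $v_n := u_n - P_0 u_n$ then satisfy $\|v_n\|_{L^1(\Omega)}=1$, $P_0 v_n = 0$, and $|Ev_n|(\Omega) = |Eu_n|(\Omega)\to 0$ because $E$ annihilates $\mathcal R$. The sequence $\{v_n\}$ is therefore bounded in $BD(\Omega)$, and the compactness of the embedding $BD(\Omega)\hookrightarrow L^1(\Omega;\mathbb R^N)$ on bounded, connected Lipschitz domains produces a subsequence (not relabelled) converging in $L^1$ to some $v\in BD(\Omega)$. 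Lower semicontinuity of the total variation of the symmetric gradient gives $|Ev|(\Omega)\leq \liminf_n |Ev_n|(\Omega)=0$, so $v$ lies in the kernel of $E$, i.e.\ $v\in \mathcal R$. Continuity of $P_0 \colon L^1(\Omega;\mathbb R^N) \to \mathcal R$ on the finite-dimensional subspace $\mathcal R$ (equivalent to $L^1$-continuity) yields $P_0 v = \lim_n P_0 v_n = 0$, hence $v=0$, contradicting $\|v\|_{L^1(\Omega)}=1$.

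For the reduction, I would decompose
$$u - R(u) = (u - P_0 u) - R(u - P_0 u) + (P_0 u - R(P_0 u)),$$
and observe that the last parenthesis vanishes because $P_0 u \in \mathcal R$ and $R$ fixes $\mathcal R$ pointwise. Hence $u - R(u) = (I - R)(u - P_0 u)$, and the continuity of $R \colon BD(\Omega)\to \mathcal R \hookrightarrow L^1(\Omega;\mathbb R^N)$ gives
$$\|u - R(u)\|_{L^1(\Omega)} \leq (1 + \|R\|)\,\|u - P_0 u\|_{L^1(\Omega)} + \|R\|\,|E(u - P_0 u)|(\Omega).$$
Since $P_0 u \in \mathcal R$, one has $E(u - P_0 u) = Eu$, and combining with the Korn--Poincar\'e inequality from the first step yields the desired bound with a constant that may be taken as $C(\Omega,R) = (1+\|R\|)\,C_0(\Omega) + \|R\|$.

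The main technical ingredient is the compactness of the embedding $BD(\Omega)\hookrightarrow L^1(\Omega;\mathbb R^N)$ on bounded connected Lipschitz domains, a classical result going back to Temam; the identification of $\ker E$ with $\mathcal R$ and the $L^1$-lower semicontinuity of $|Eu|(\Omega)$ are also standard. Given these, the argument above is essentially a soft functional-analytic manipulation and I do not anticipate further difficulties.
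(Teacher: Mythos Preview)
The paper does not prove this theorem; it is stated in Appendix~\ref{BD} as a known Poincar\'e--Friedrichs inequality in $BD$ and attributed to \cite{ACDM}, \cite{K}, and \cite{T}. Your argument is correct and is essentially the standard one: a compactness/lower-semicontinuity contradiction establishes the inequality for one particular projection, and the linearity of $R$ together with the hypothesis $R|_{\mathcal R}={\rm id}$ reduces the general case to that one. The only phrasing that deserves a word of clarification is the $L^1$-continuity of $P_0$: this holds because $\mathcal R$ is spanned by bounded (affine) functions, so the $L^2$-orthogonal projection $u\mapsto \sum_i (\int_\Omega u\cdot e_i\,\de x)\,e_i$ is well defined and continuous on $L^1(\Omega;\mathbb R^N)$; this is what lets you pass to the limit in $P_0 v_n=0$.
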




  \section{Auxiliary statements}\label{sec: aux-app}

\begin{lemma}\label{lemma: iso}
Let $\Omega \subset \mathbb{R}^N$ be open, bounded with Lipschitz boundary. Then, there is a constant $C_\Omega >0$ such that for each set of finite perimeter $E \subset \Omega$ it holds that
\begin{align}\label{brian-1}
\mathcal{H}^{N-1}(\partial^* E) \leq C_\Omega \big(\mathcal{H}^{N-1}(\partial^* E\cap \Omega) + (\mathcal{L}^N(E))^{\frac{N-1}{N}} \big).  
\end{align}
The constant $C_\Omega$ is invariant under rescaling of the domain.
\end{lemma}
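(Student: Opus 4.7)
Since $E \subset \Omega$, the reduced boundary splits as $\partial^* E = (\partial^* E \cap \Omega) \sqcup (\partial^* E \cap \partial\Omega)$, and therefore it suffices to show
\[
\mathcal{H}^{N-1}(\partial^* E \cap \partial\Omega) \leq C_\Omega \big(\mathcal{H}^{N-1}(\partial^* E\cap \Omega) + (\mathcal{L}^N(E))^{(N-1)/N}\big).
\]
The left-hand side equals the $L^1(\partial\Omega)$-norm of the inner trace $T\chi_E$, since $T\chi_E$ is the characteristic function of $\partial^* E \cap \partial\Omega$ (up to $\mathcal{H}^{N-1}$-null sets). So the task reduces to establishing a trace inequality for the characteristic function $\chi_E$ with the indicated scaling.

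The plan is to use a classical tubular-neighborhood argument. Cover $\partial\Omega$ by finitely many charts in which $\partial\Omega$ is the graph of a Lipschitz function and introduce a bi-Lipschitz parametrization $(y,t)\in \partial\Omega \times (0,\delta_0)$ of an interior tubular neighborhood $\Omega_{\delta_0}$ of $\partial\Omega$, for some $\delta_0>0$ depending only on $\Omega$. For $0<\delta<\delta_0$, applying the fundamental theorem of calculus along the transversal direction to each $BV$-function $u$, multiplying by $(\delta-t)/\delta$ and integrating in $t\in(0,\delta)$ and $y\in\partial\Omega$, I would obtain
\[
\int_{\partial\Omega} |Tu|\,\de\mathcal{H}^{N-1} \leq C_\Omega \Big(\delta^{-1}\|u\|_{L^1(\Omega_\delta)} + |Du|(\Omega_\delta)\Big).
\]
Applying this to $u=\chi_E$ and bounding $\|u\|_{L^1(\Omega_\delta)}\leq \mathcal{L}^N(E)$ and $|D\chi_E|(\Omega_\delta)\leq \mathcal{H}^{N-1}(\partial^* E\cap\Omega)$ yields
\[
\mathcal{H}^{N-1}(\partial^* E \cap \partial\Omega)\leq C_\Omega\Big(\delta^{-1}\mathcal{L}^N(E) + \mathcal{H}^{N-1}(\partial^* E\cap\Omega)\Big),\quad 0<\delta<\delta_0.
\]

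Optimization in $\delta$ finishes the proof. If $\mathcal{L}^N(E)^{1/N}\leq \delta_0$, the choice $\delta=\mathcal{L}^N(E)^{1/N}$ gives the first term $C_\Omega (\mathcal{L}^N(E))^{(N-1)/N}$. If instead $\mathcal{L}^N(E)^{1/N}> \delta_0$, I would use the trivial estimate $\mathcal{H}^{N-1}(\partial^* E\cap\partial\Omega)\leq \mathcal{H}^{N-1}(\partial\Omega)\leq \mathcal{H}^{N-1}(\partial\Omega)\delta_0^{1-N}(\mathcal{L}^N(E))^{(N-1)/N}$, yielding the same type of bound with an adjusted constant. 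Combining both cases gives \eqref{brian-1}.

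For the scale invariance statement, I would observe that under $\Omega \mapsto \lambda\Omega$ and the corresponding rescaling of $E$, each of the three quantities in \eqref{brian-1} scales like $\lambda^{N-1}$; moreover, the Lipschitz constants of the boundary charts are preserved, while $\delta_0$ rescales to $\lambda\delta_0$, so every constant appearing in the above argument is invariant. The only substantive obstacle is the standard but slightly technical FTC/slab trace inequality; everything else is bookkeeping and the final case split.
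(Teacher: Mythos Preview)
Your proof is correct. Both your argument and the paper's rest on the same core idea—localize to boundary graphs and integrate transversally—but they are packaged differently. The paper works directly with the set: in each chart it splits the base points of $\partial^*E\cap\partial\Omega$ into those whose transversal segment is entirely contained in $E$ (controlled by $\mathcal{L}^N(E)/h$ via Fubini, where $h$ is the fixed chart height) and those whose segment must cross $\partial^*E\cap\Omega$ (controlled by $\mathcal{H}^{N-1}(\partial^*E\cap\Omega)$); then it uses the scale-invariant estimate $h\geq C_\Omega^{-1}\mathcal{L}^N(\Omega)^{1/N}\geq C_\Omega^{-1}\mathcal{L}^N(E)^{1/N}$ to convert $\mathcal{L}^N(E)/h$ into $(\mathcal{L}^N(E))^{(N-1)/N}$, thereby avoiding any case split. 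You instead invoke the general $BV$ trace inequality with a free slab width $\delta$ and optimize, which forces the small/large volume dichotomy. Your route has the advantage of reusing a standard black box; the paper's route is slightly more self-contained and sidesteps the case analysis via $E\subset\Omega$.
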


A similar statement in dimension two can be found in \cite[Lemma A.2]{briani}.

\begin{proof}
By the  splitting $\partial^* E = (\partial^* E \cap \Omega) \cup (\partial^* E \cap \partial \Omega)$ we see that we need to show 
$$\mathcal{H}^{N-1}(\partial^* E \cap \partial \Omega) \leq  C_\Omega \big(\mathcal{H}^{N-1}(\partial^* E\cap \Omega) + (\mathcal{L}^N(E))^{\frac{N-1}{N}} \big).$$
As $\partial \Omega$ is Lipschitz, we can cover it by a finite number of cuboids $D_1,\ldots, D_M$ such that each $\partial \Omega \cap D_i$ can be represented as the graph of a Lipschitz function. Clearly, it suffices  to show
\begin{align}\label{brian0}
\mathcal{H}^{N-1}(\partial^* E \cap \partial \Omega \cap D_i)  \leq C \big(\mathcal{H}^{N-1}(\partial^* E\cap D_i) + (\mathcal{L}^N(E \cap D_i))^{\frac{N-1}{N}} \big) \quad \text{for all $i=1,\ldots,M$}.
\end{align}
Then, \eqref{brian-1} follows, and the invariance of the constant under rescaling is a consequence of standard rescaling arguments since both sides are positively $(N-1)$-homogeneous.

We fix any $D_i$ and write $D$ instead of $D_i$ for simplicity. After a transformation it is not restrictive to assume that $D = (-l,l)^{N-1} \times (-h,h)$ for $l,h >0$, and that
$$\partial \Omega \cap D = \lbrace  (z,\varphi(z)) \colon   z \in (-l,l)^{N-1} \rbrace, \quad   \Omega \cap D = \lbrace  (z,t) \colon   z \in (-l,l)^{N-1}, \, t < \varphi(z) \rbrace, $$
where $\varphi \colon (-l,l)^{N-1} \to (-\frac{h}{2},\frac{h}{2})$ is a Lipschitz function. We let $\omega_E = \lbrace z \in (-l,l)^{N-1} \colon    (z,\varphi(z)) \in \partial^* E \rbrace$ and let $\omega_E^{\rm full} \subset \omega_E$ be defined as the points such that the entire segment $\lbrace z \rbrace \times (-h,\varphi(z)) $ is contained in $E$, i.e., $\omega^{\rm full}_E = \lbrace z \in (-l,l)^{N-1} \colon    \mathcal{H}^1( \lbrace z \rbrace \times (-h,\varphi(z)) \setminus E) = 0\rbrace$. By the area formula we get
\begin{align}\label{brian1}
\mathcal{H}^{N-1}(\partial^* E \cap \partial \Omega \cap D) \leq   C \mathcal{H}^{N-1}(\omega_E),  
\end{align}
where $C>0$ only depends on the Lipschitz  constant of $\varphi$. We note that for $\mathcal{H}^{N-1}$-a.e.\  $z \in \omega_E \setminus \omega_E^{\rm full}$, the segment $\lbrace z \rbrace \times (-h,\varphi(z))$ intersects $\partial^*E \cap D$, and thus, by the area formula,
\begin{align}\label{brian2}
\mathcal{H}^{N-1}(\omega_E \setminus \omega_E^{\rm full} ) \leq \mathcal{H}^{N-1}(\partial^*E \cap D).
\end{align}
Moreover, by Fubini's theorem and the fact that $D = (-l,l)^{N-1} \times (-h,h)$,  $\varphi \geq - \frac{h}{2}$, we get 
\begin{align}\label{brian3}
\mathcal{H}^{N-1}( \omega_E^{\rm full} ) \leq \frac{2}{h}\mathcal{L}^N(E \cap D). 
\end{align}
Combining \eqref{brian1}--\eqref{brian3} we get
$$\mathcal{H}^{N-1}(\partial^* E \cap \partial \Omega \cap D) \leq C\big(\mathcal{H}^{N-1}(\partial^*E \cap D) + \tfrac{2}{h}\mathcal{L}^N(E \cap D) \big). $$
Eventually, it holds   $h \geq \frac{1}{C_\Omega}\mathcal{L}^N(\Omega)^{1/N}$ for a constant $C_\Omega >0$ depending on $\Omega$. Thus, $\frac{1}{h} \leq C_\Omega (\mathcal{L}^N(\Omega))^{-1/N} \leq C_\Omega (\mathcal{L}^N(E \cap D))^{-1/N}$. This shows \eqref{brian0} and concludes the proof. 
\end{proof}

\begin{proposition}[Korn-Poincar\'e inequality]\label{prop: KP}
Let $\Omega \subset \mathbb{R}^N$ be open, bounded,  connected with Lipschitz boundary. There exists a constant $C>0$ only depending on $\Omega$ such that for each $u \in BV(\Omega;\mathbb{R}^N)$ it holds that
$$   \Vert u - \bar{u} \Vert_{L^1(\Omega)} \leq C\big(|Eu|(\Omega) +   |Du(\Omega)|  \big),  $$
where $\bar{u} := \fint_\Omega u(x) \, {\rm d}x$.
\end{proposition}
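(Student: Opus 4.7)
The plan is to reduce to the standard Korn--Poincar\'e inequality on $BD$ (Theorem \ref{Thm2.8BFT}) by using a carefully chosen projection $R\colon BD(\Omega) \to \mathcal R$ whose deviation from $\bar u$ is controlled precisely by the scalar quantity $|Du(\Omega)|$. The natural construction is suggested by the Gauss--Green formula in $BV$, namely $(Du(\Omega))_{ij} = \int_{\partial\Omega}\mathrm{tr}\,u_i\,\nu_j\,d\mathcal H^{N-1}$ for $u\in BV(\Omega;\mathbb R^N)$. The right-hand side actually makes sense on all of $BD(\Omega)$ through the trace operator, and its skew-symmetric part will provide the map we need.

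Concretely, for $u \in BD(\Omega)$ I would define the skew matrix
$$(M_u)_{ij} := \frac{1}{2\mathcal L^N(\Omega)}\int_{\partial \Omega}\bigl(\mathrm{tr}\,u_i\,\nu_j - \mathrm{tr}\,u_j\,\nu_i\bigr)\,d\mathcal H^{N-1},$$
set $R(u)(x) := \bar u + M_u(x - \bar x)$, and check that $R$ is linear, continuous from $BD(\Omega)$ into $\mathcal R$, and the identity on $\mathcal R$. Continuity follows from the $BD$-boundedness of the trace operator and of the mean; the projection property on elements $a + Mx \in \mathcal R$ with $M$ skew is a direct computation using the elementary identities $\int_{\partial\Omega}\nu_j\,d\mathcal H^{N-1}=0$ and $\int_{\partial\Omega}x_k\nu_j\,d\mathcal H^{N-1}=\delta_{jk}\mathcal L^N(\Omega)$, combined with the skew-symmetrization. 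Theorem \ref{Thm2.8BFT} then yields
$$\|u - R(u)\|_{L^1(\Omega)}\leq C|Eu|(\Omega).$$

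Finally, I would estimate the remainder $\|R(u) - \bar u\|_{L^1(\Omega)} = \int_\Omega |M_u(x - \bar x)|\,dx \leq C_\Omega|M_u|$. For $u \in BV(\Omega;\mathbb R^N)$, the Gauss--Green formula applied inside the definition of $M_u$ gives exactly
$$M_u = \frac{Du(\Omega) - Du(\Omega)^T}{2\,\mathcal L^N(\Omega)},$$
so $|M_u| \leq \mathcal L^N(\Omega)^{-1}|Du(\Omega)|$, and the triangle inequality closes the argument. I do not expect a true obstacle: the only point to verify carefully is that the projection $R$ meets the continuity hypothesis of Theorem \ref{Thm2.8BFT}, which is immediate from the standard boundedness of the trace $BD(\Omega)\to L^1(\partial\Omega;\mathbb R^N)$ available because $\partial\Omega$ is Lipschitz.
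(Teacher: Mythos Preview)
Your argument is correct and takes a genuinely different route from the paper. The paper argues by contradiction and compactness: assuming a normalized sequence $u_n$ with $|Eu_n|(\Omega)+|Du_n(\Omega)|\to 0$, it passes to a $BD$-limit $u=Ax+b$ with $A\in\mathbb R^{N\times N}_{\rm skew}$, then uses the divergence identity $D_ju^i_n(\Omega)=\int_{\partial\Omega}u^i_n\nu^j_\Omega\,d\mathcal H^{N-1}$ together with continuity of the trace under strict convergence to force $A=0$ and reach a contradiction. Your approach is direct and constructive: you build an explicit projection $R\colon BD(\Omega)\to\mathcal R$ via the skew part of the boundary flux, apply Theorem~\ref{Thm2.8BFT} to it, and then read off from Gauss--Green that for $u\in BV$ the correction $M_u$ is exactly the skew part of $Du(\Omega)/\mathcal L^N(\Omega)$. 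Both proofs hinge on the same Gauss--Green identity; yours avoids the contradiction/compactness machinery and makes transparent why the scalar $|Du(\Omega)|$ is precisely what is missing from $|Eu|(\Omega)$ to control $u-\bar u$, at the small cost of having to verify that $R$ meets the hypotheses of Theorem~\ref{Thm2.8BFT} (which you do correctly, since the $BD$-trace is norm-bounded and the identities $\int_{\partial\Omega}\nu_j=0$, $\int_{\partial\Omega}x_k\nu_j=\delta_{jk}\mathcal L^N(\Omega)$ give $R|_{\mathcal R}={\rm id}$).
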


Note that the main point is that on the right-hand side it does \emph{not} appear the total variation of $Du$ but just  the  Frobenius norm  of $Du(\Omega) \in \mathbb{R}^{N \times N}$.

\begin{proof}
We assume by contradiction that the statement was false, i.e., for each $n \in \mathbb{N}$ there exists $u_n \in BV(\Omega; \mathbb{R}^N)$ such that
\begin{align}\label{kp1}
 \Vert u_n - \overline{u_n} \Vert_{L^1(\Omega)} \geq \frac{1}{n} \big(|Eu_n|(\Omega) +   |Du_n(\Omega)|\big),
 \end{align}
 where $\overline{u_n} := \fint_\Omega u_n(x) \, {\rm d}x$.
Up to rescaling (not relabeled), we can suppose that  $\Vert u_n - \overline{u_n} \Vert_{L^1(\Omega)} = 1$ for all $n \in \mathbb{N}$. As $\Vert u_n - \overline{u_n} \Vert_{BD(\Omega)}$ is bounded, by compactness in $BD$ (see \cite[Chapter 2, Theorem 2.4]{T}) we obtain $u \in BD(\Omega)$ with $Eu = 0$  such that $u_n - \overline{u_n} \to u$ in $L^1(\Omega;\mathbb{R}^N)$. In particular, $\Vert u \Vert_{L^1(\Omega)} = 1$, $\fint_\Omega u(x) \, {\rm d}x=0$,  and $u(x) = Ax + b$ for some $A = (a_{ij})_{ij}\in \mathbb{R}^{N \times N}_{\rm skew}$ and $b \in \mathbb{R}^N$.

As a special case of the divergence theorem in $BV$ (see e.g.\ \cite[Equation (3.85)]{AFP}) we get
$$    D_ju_n^{i}(\Omega) = \int_{\partial \Omega}   u_n^i \nu^j_\Omega \, {\rm d}\mathcal{H}^{N-1} \quad \text{for all $i,j=1,\ldots,N$}, $$
where on the right hand side $u_n^i$ denotes the $i$-th component of the trace of $u_n$ on $\partial \Omega$, and $\nu_\Omega^j$ the $j$-th component of the outer unit normal. By \eqref{kp1} we get $|Du_n(\Omega)| \to 0$ as $n \to \infty$. Thus, by the continuity of the trace in $BD$ with respect to strict convergence, see \cite[Proposition 3.4]{Ba}, we deduce
$$ 0 = \int_{\partial \Omega}    u^i \nu^j_\Omega \, {\rm d}\mathcal{H}^{N-1} \quad \text{for all $i,j=1,\ldots,N$}.  $$
As $u = A \cdot + b$ is affine, we can use the divergence theorem once more to find$$0 =    \int_\Omega  \partial_j  u^{i} \, {\rm d}x =   \mathcal{L}^N(\Omega) a_{ij} \quad \text{for all $i,j=1,\ldots,N$}, $$
i.e., $A = 0$. Therefore $u \equiv b$ is a constant function. This, however, contradicts $\Vert u \Vert_{L^1(\Omega)} = 1$ and $\fint_\Omega u(x) \, {\rm d}x=0$.   
\end{proof}

\subsection*{Acknowledgements}

  The authors thank D.R.~Owen for his insights and suggestions on the topic of this paper.

The work of JM was partially funded by Funda\c{c}\~{a}o para a Ci\^{e}ncia e Tecnologia (FCT), Portugal, through grant No. UID/4459/2025 and by the FCT-mobility program (outgoing) in 2025.

He also gratefully acknowledges the support and hospitality of Sapienza-University of Rome in May-June 2025.

EZ acknowledges the support of Piano Nazionale di Ripresa e Resilienza (PNRR) - Missione 4 ``Istruzione e Ricerca''
- Componente C2 Investimento 1.1, "Fondo per il Programma Nazionale di Ricerca e
Progetti di Rilevante Interesse Nazionale (PRIN)" - Decreto Direttoriale n. 104 del 2 febbraio 2022 - CUP 853D23009360006. She is a member of the Gruppo Nazionale per l'Analisi Matematica, la Probabilit\`a e le loro Applicazioni (GNAMPA) of the Istituto Nazionale di Alta Matematica ``F.~Severi'' (INdAM). 
The work of EZ is also supported by Sapienza - University of Rome through the projects Progetti di ricerca medi, (2024) and (2025), coordinator  M. Amar, and through the project https://doi.org/10.54499/UID/04674/2025 of the RCMI, Universidade de \'Evora.
She also gratefully acknowledges the hospitality and support of CAMGSD, IST-ID.

\bibliographystyle{plain}

\end{document}